\numberwithin{equation}{section}
\numberwithin{figure}{section}
\theoremstyle{plain}
\newtheorem{thm}{Theorem}[section]
\theoremstyle{definition}
\newtheorem{definition}[thm]{Definition}
\newtheorem*{claim}{Claim}
\theoremstyle{plain}
\newtheorem{prop}[thm]{Proposition}
\theoremstyle{remark}
\newtheorem{rem}[thm]{Remark}
\theoremstyle{plain}
\newtheorem{cor}[thm]{Corollary}
\theoremstyle{plain}
\newtheorem{lem}[thm]{Lemma}
\theoremstyle{remark}
\newtheorem*{rem*}{Remark}
\newcommand{\abs}[1]{\left\vert#1\right\vert}
\newcommand{\set}[1]{\left\{#1\right\}}
\newcommand{\Real}{\mathbb{R}}
\newcommand{\df}{\mathrm{d}}
\newcommand{\ad}{\mathrm{ad}}
\newcommand{\tr}{\mathrm{tr}}
\newcommand{\Der}{\mathrm{Der}}
\newcommand{\Ric}{\mathrm{Ric}}
\newcommand{\Hess}{\mathrm{Hess}}
\newcommand{\II}{\mathrm{II}}
\newcommand{\scal}{\mathrm{scal}}
\newcommand{\frakn}{\mathfrak{n}}
\newcommand{\fraka}{\mathfrak{a}}
\newcommand{\frakk}{\mathfrak{k}}
\newcommand{\frakh}{\mathfrak{h}}
\newcommand{\frakg}{\mathfrak{g}}
\newcommand{\frakp}{\mathfrak{p}}
\newcommand{\frakq}{\mathfrak{q}}
\newcommand{\frake}{\mathfrak{e}}
\newcommand{\half}{\frac{1}{2}}
\newcommand{\sph}{\mathbb{S}}
\newcommand{\To}{\rightarrow}
\newcommand{\scp}[1]{\left\langle #1\right\rangle}
\begin{document}

\title[Homogeneous WPE and Ricci soliton metrics]{Warped product Einstein metrics on homogeneous spaces and homogeneous Ricci solitons}

\author{Chenxu He}
\address{
Chenxu He\\
Department of Mathematics \\
University of Oklahoma \\
Norman, OK 73019-3103}
\email{he.chenxu@ou.edu}
\urladdr{\url{http://sites.google.com/site/hechenxu/}}

\author{Peter Petersen}
\address{
Peter Petersen\\
520 Portola Plaza\\
Dept of Math UCLA\\
Los Angeles, CA 90095 \\}
\email{petersen@math.ucla.edu}
\urladdr{\url{http://www.math.ucla.edu/~petersen}}
\thanks{The second author was supported in part by NSF-DMS grant 1006677}

\author{William Wylie}
\address{
William Wylie\\
215 Carnegie Building\\
Dept. of Math, Syracuse University\\
Syracuse, NY, 13244.}
\email{wwylie@syr.edu}
\urladdr{https://wwylie.expressions.syr.edu/}
\thanks{The third author was supported in part by NSF-DMS grant 0905527}

\subjclass[2000]{53B20, 53C30}

\maketitle

\begin{abstract}
In this paper we consider connections between Ricci solitons and Einstein metrics on homogeneous spaces.  We  show that a semi-algebraic Ricci soliton admits an Einstein one-dimensional extension if  the soliton derivation can be chosen to be normal. Using our previous work on warped product  Einstein metrics, we show that every normal semi-algebraic Ricci soliton also admits a $k$-dimensional Einstein extension for any $k\geq 2$.  We also prove converse theorems for these constructions and  some geometric and topological structure results for homogeneous warped product Einstein metrics.    In the appendix we give an alternative approach to semi-algebraic Ricci solitons which naturally leads to a definition of semi-algebraic Ricci solitons in the non-homogeneous setting.
\end{abstract}

\section{Introduction}

A Riemannian manifold $(M, g)$ is called an \emph{Einstein manifold} if its Ricci curvature satisfies  $\Ric = \lambda g$ for some constant $\lambda \in \Real$. In this paper we are interested in non-compact homogeneous Einstein manifolds.   There are also many existence and nonexistence results if the manifold is compact, see for example \cite{WangZiller}.  From the classical Bonnet-Myers Theorem, an Einstein manifold is compact if $\lambda>0$. If $\lambda=0$, a homogeneous Ricci flat space is necessary flat, see \cite{AlekKim}. So for non-compact homogeneous Einstein manifolds one can assume that $\lambda< 0$.  

All known examples of non-compact, nonflat homogeneous Einstein manifolds are isometric to Einstein solvmanifolds. A solvmanifold $(G, g)$ is a simply-connected solvable Lie group $G$ endowed with a left invariant metric $g$. It has been conjectured by D. V. Alekseevskii that any noncompact, nonflat, homogeneous Einstein space $M$ has maximal compact isotropy subgroups, see \cite{Besse} and \cite{Alekseevskii}. If $G$ is a linear group that acts transitively on $M$,  this implies that $M$ is a solvmanifold or is diffeomorphic to a Euclidean space, see \cite[Section 2]{Heber}. Einstein solvmanifolds have been intensively investigated in \cite{Heber} and \cite{LauretStandard}.

A natural generalization of an Einstein manifold is a  Ricci soliton, i.e., a metric that satisfies the equation
\begin{equation}\label{eqn:RSLieX}
\Ric = \lambda g + \half \mathscr{L}_X g
\end{equation}
where $X \in \mathfrak{X}(M)$ is a smooth vector field and $\mathscr{L}_X$ is the Lie derivative. A trivial example of a Ricci soliton is an Einstein metric with $X$ a Killing vector field.  A Ricci soliton is called non-trivial if $X$ is not a Killing vector field. Under the Ricci flow, a Ricci soliton metric evolves via diffeomorphism and scaling. Besides the important role in the singularity analysis of Ricci flows, the geometry of Ricci solitons shares some common features with Einstein manifolds. 

A Ricci soliton is called a gradient Ricci soliton if $X$ is a gradient vector field. From the work of Ivey, Naber, Perelman and Petersen-Wylie, a nontrivial, nonflat homogeneous Ricci soliton must be noncompact, expanding ($\lambda<0$)  and of non-gradient type.  In fact, all known examples are isometric to a left-invariant metric $g$ on a simply connected solvable Lie group $G$, which when identified with an inner product on the Lie algebra $\mathfrak{g}$ of $G$ satisfies
\begin{equation}
\Ric = \lambda I + D \label{eqn:AlgSol}
\end{equation}
for some $\lambda \in \Real$ and $D\in \Der(\mathfrak{g})$ a symmetric derivation. On the other hand, any left invariant metric which satisfies the above equation is automatically a Ricci soliton and the diffeomorphisms which are generated by $D$ are automorphisms on the Lie algebra $\mathfrak{g}$. A generalized version of the Alekseevskii conjecture claims that these exhaust all examples of nontrivial, nonflat homogeneous Ricci solitons.

Recall that for a Lie group $H$, a Riemannian manifold, $(N,g)$ is called $H$-homogeneous if $H$ acts transitively and by isometries on $N$. The concept of a semi-algebraic Ricci soliton was  introduced  recently by M. Jablonski in \cite{Jablonski} via Ricci flows on homogeneous spaces. Roughly speaking,  a Ricci soliton on a $H$-homogeneous is  \emph{semi-algebraic with respect to $G$ } if the Ricci flow with initial metric $g$  flows by scaling and automorphisms which preserve the $H$-homogeneous structure (see Definition \ref{def:Jabl} below).   Jablonski proves several interesting results, including that every Ricci soliton on a homogeneous space is semi-algebraic with respect to its isometry group.  On the other hand, when $H$ is a proper subgroup of the isometry group, a  $H$-homogeneous Ricci soliton may not be semi-algebraic with respect to $H$ (see Example 1.3 of \cite{Jablonski}).

 In this paper we introduce the notion of a \emph{normal} semi-algebraic Ricci soliton.  Roughly speaking, a  semi-algebraic soliton on a $G$-homogeneous space  is normal if   a certain derivation, $D$,  on the Lie algebra  commutes with its adjoint when projected to the tangent space, see Section 2.1 for details.  It is an easy consequence of the definition that all algebraic Ricci solitons are normal.   Our first result shows that being a normal semi-algebraic Ricci soliton is the condition which allows us to construct a one dimensional extension which is Einstein.

\begin{thm}\label{thm:Einstein1extsoliton}
A non-flat, non-trivial  normal semi-algebraic Ricci soliton on a homogeneous space admits an  Einstein  one-dimensional extension.
\end{thm}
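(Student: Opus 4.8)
The plan is to imitate the classical passage from a nilsoliton to its Einstein solvable extension, carrying the isotropy group along. I would first recall the data attached to the soliton, as set up in Section~2: a transitive group $G$, a reductive decomposition $\mathfrak{g}=\mathfrak{k}\oplus\mathfrak{p}$, a $G$-invariant inner product $\langle\cdot,\cdot\rangle$ on $\mathfrak{p}$, a constant $\lambda<0$, and a derivation $D\in\Der(\mathfrak{g})$ with $D(\mathfrak{k})\subseteq\mathfrak{k}$ whose projection $D_{\mathfrak{p}}=\mathrm{pr}_{\mathfrak{p}}\circ D|_{\mathfrak{p}}$ satisfies $\Ric=\lambda\,\mathrm{Id}_{\mathfrak{p}}+\tfrac12\bigl(D_{\mathfrak{p}}+D_{\mathfrak{p}}^{t}\bigr)$; the normality hypothesis is $[D_{\mathfrak{p}},D_{\mathfrak{p}}^{t}]=0$. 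Writing $D_{\mathfrak{p}}=S+A$ with $S$ symmetric and $A$ skew, normality reads $[S,A]=0$; since the soliton is non-flat and non-trivial, $S\neq 0$, and by the trace identity below $\tr S=\tr D_{\mathfrak{p}}>0$.

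Next I would form the semidirect sum $\widetilde{\mathfrak{g}}=\mathbb{R}H\ltimes_{D}\mathfrak{g}$ with $\ad_{H}=D$ on $\mathfrak{g}$, let $\widetilde G$ be the corresponding simply connected Lie group, and regard $K\subseteq G\subseteq\widetilde G$. Because $D$ preserves $\mathfrak{k}$, the subspace $\mathfrak{k}$ remains a subalgebra and $\widetilde M:=\widetilde G/K$ is a reductive homogeneous space with complement $\widetilde{\mathfrak{p}}=\mathbb{R}H\oplus\mathfrak{p}$; since $[\mathfrak{k},H]=-D(\mathfrak{k})\subseteq\mathfrak{k}$, the isotropy action of $\mathrm{Ad}(K)$ fixes $H$ in $\widetilde{\mathfrak{p}}$, so the inner product $\widetilde g$ on $\widetilde{\mathfrak{p}}$ that restricts to $\langle\cdot,\cdot\rangle$ on $\mathfrak{p}$, makes $H\perp\mathfrak{p}$, and sets $|H|^{2}=q:=\tr D_{\mathfrak{p}}$ is $\mathrm{Ad}(K)$-invariant. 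The $G$-orbit of the base point of $\widetilde M$ is isometric to $M$ and has codimension one, so $(\widetilde M,\widetilde g)$ is a one-dimensional extension of the soliton, and the task is to verify $\widetilde{\Ric}=\lambda\,\widetilde g$.

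For this I would compute $\widetilde{\Ric}$ from the standard formulas for invariant metrics, using that $\ad_{H}$ acts on $\widetilde{\mathfrak{p}}$ through $D_{\mathfrak{p}}$. The hypersurface $M\subset\widetilde M$ has Weingarten operator $\tfrac1{\sqrt q}S$ and mean curvature $\tfrac1{\sqrt q}\tr D_{\mathfrak{p}}$; combining this with the radial curvatures in the Gauss equation, then using the soliton equation $\Ric|_{\mathfrak{p}}=\lambda\,\mathrm{Id}_{\mathfrak{p}}+S$ and the choice $q=\tr D_{\mathfrak{p}}$, one obtains $\widetilde{\Ric}|_{\mathfrak{p}}=\Ric|_{\mathfrak{p}}-\tfrac1q(\tr D_{\mathfrak{p}})\,S=\lambda\,\mathrm{Id}_{\mathfrak{p}}$ (here $[S,A]=0$ is what discards the terms quadratic in $A$). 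Along the normal line, a direct computation gives $\widetilde{\Ric}(H,H)=-\tr(S^{2})$, and the trace identity $\tr(S^{2})=-\lambda\,\tr S$ — the analogue of the nilsoliton relation $\tr D^{2}=-\lambda\,\tr D$, a standard consequence of the soliton equation — then yields $\widetilde{\Ric}(H,H)=\lambda\,\tr S=\lambda\,|H|^{2}$.

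The step I expect to be the real obstacle is the vanishing of the mixed component $\widetilde{\Ric}(H,X)$ for $X\in\mathfrak{p}$. Expanding it via the homogeneous curvature formula, the terms do not obviously cancel, since $\mathfrak{p}$ is not a nilpotent subalgebra and $D_{\mathfrak{p}}$ is not symmetric; the surviving obstruction is assembled from the commutator $[S,A]$ (equivalently $[D_{\mathfrak{p}},D_{\mathfrak{p}}^{t}]$) paired against $\ad_{X}|_{\mathfrak{p}}$, so the normality hypothesis is exactly what makes it vanish — which is why normality appears in the statement. Granting this, $\widetilde{\Ric}=\lambda\,\widetilde g$ with $\lambda<0$, so $(\widetilde M,\widetilde g)$ is a non-compact, non-flat Einstein one-dimensional extension of the given soliton. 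Since $\widetilde M$ is manufactured from $D$ alone and the isometric inclusion $M\hookrightarrow\widetilde M$ is built in, this construction also sets up the converse statements proved later in the paper.
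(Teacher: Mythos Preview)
Your overall strategy --- form the semidirect product extension, compute the Ricci tensor of the one-dimensional extension, and verify it equals $\lambda\,\widetilde g$ --- is exactly the paper's approach, and your handling of $\widetilde\Ric(H,H)$ and $\widetilde\Ric|_{\mathfrak p}$ is correct (including the use of $\tr(S^2)=-\lambda\tr S$ and of $[S,A]=0$ to kill the extra tangential term).  But you have misdiagnosed the mixed component.

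In the paper's computation (Lemma~\ref{lem:RicciGKHK}, via the Codazzi equation) the mixed term is simply
\[
\widetilde\Ric(X,\xi)\;=\;-\alpha\,\mathrm{div}(S)(X),
\]
and there is no $[S,A]$ contribution here whatsoever.  Its vanishing has nothing to do with normality: it follows from the contracted second Bianchi identity on the homogeneous soliton $N$, namely $\mathrm{div}(\mathscr L_Y h)=2\,\mathrm{div}(\Ric)=d\,\scal=0$, so $\mathrm{div}(S)=0$.  You never invoke $\mathrm{div}(S)=0$, and your assertion that ``the surviving obstruction is assembled from the commutator $[S,A]$'' for $\widetilde\Ric(H,X)$ is incorrect.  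Conversely, the place where normality is genuinely indispensable is the tangential block $\widetilde\Ric|_{\mathfrak p}$: the $\nabla_\xi T$ term contributes $-\alpha^2\,h([S,A](X),X)$, and it is this term (linear in $A$, not quadratic) that $[S,A]=0$ eliminates.  You do note this parenthetically, but then relocate the ``real obstacle'' to the wrong place.  Once you replace your argument for the mixed term by the $\mathrm{div}(S)=0$ step, the proof goes through exactly as in Theorems~\ref{thm:EinsteinConstruction} and~\ref{thm:EinsteinExtDnormal}.
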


\begin{rem}
This theorem extends part of the work of J. Lauret on nilpotent groups which states that an algebraic Ricci soliton on a nilpotent group admits an Einstein one-dimensional extension  \cite{LauretNil}. His argument relies on the special curvature properties of nilpotent Lie groups. It is unknown  whether there are semi-algebraic Ricci solitons which are not algebraic.
\end{rem}

\begin{rem}
Our construction shows that if there is a normal semi-algebraic or algebraic Ricci soliton that is not isometric to a simply-connected solvable Lie group, then there is also a homogeneous Einstein manifold which is not isometric to an Einstein solvmanifold, see Theorems \ref{thm:EinsteinExtDnormal}, \ref{thm:LieStruc} and their remarks. This would give a counter-example to the Alekseevskii conjecture.  This result was also obtained in the case of algebraic Ricci solitons in a recent preprint of R. Lafuente and J. Lauret in \cite{LafuenteLauret} using different methods. 
\end{rem}

Our next result gives another connection between semi-algebraic Ricci solitons and homogeneous Einstein manifolds. This new connection is obtained by studying a special construction of Einstein metrics as  warped product metrics. For constants $\lambda \in \Real$ and $m\ne 0$ the space of all solutions to the \emph{$(\lambda, n+m)$-Einstein equation} on a Riemannian manifold $(M^n, g)$ is the following function space
\begin{equation}\label{eqn:Wspacewpe}
W(M, g) = W_{\lambda, n+m}(M, g) = \set{w\in C^{\infty}(M): \Hess w = \frac{w}{m}\left(\Ric - \lambda g\right)}
\end{equation}
A nonzero constant function is in $W(M, g)$ if and only if $(M, g)$ is a $\lambda$-Einstein manifold. When $m\geq 2$ is a positive integer, then $W(M, g)$ contains a positive function $w$ if and only if the product $E = M \times F^m$ with metric $g_E = g + w^2 g_F$ is a $\lambda$-Einstein manifold where the fiber $(F, g_F)$ is an appropriate space form. We call such a manifold $(M, g)$ a \emph{$(\lambda,n+m)$-Einstein manifold}. In this case, we also require $w = 0$ on $\partial M$ if it is non-empty. A $(\lambda, n+m)$-Einstein manifold is called \emph{non-trivial} if the warping function is not a constant. 

\begin{thm}\label{thm:WPE1extensionintro}
Let $m > 0$ be an integer and $\lambda < 0$ be a constant. A non-flat, non-trivial  normal semi-algebraic Ricci soliton admits a homogeneous $(\lambda, n+m)$-Einstein one-dimensional extension. 
\end{thm}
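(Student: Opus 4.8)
The plan is to bootstrap from Theorem \ref{thm:Einstein1extsoliton}, which already produces an Einstein one-dimensional extension of a non-flat, non-trivial normal semi-algebraic Ricci soliton. So let $(M^{n-1},g_M)$ be the given soliton and let $(\bar M^n, \bar g)$ be the homogeneous Einstein one-dimensional extension furnished by Theorem \ref{thm:Einstein1extsoliton}; it is $\lambda$-Einstein for the same $\lambda<0$ (after normalizing the scale). The goal is then to show that this $(\bar M,\bar g)$ is itself a non-trivial $(\lambda, n+m)$-Einstein manifold, i.e. that the function space $W_{\lambda, n+m}(\bar M,\bar g)$ contains a positive, non-constant function $w$. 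Granting that, the warped product $\bar M\times F^m$ with $\bar g + w^2 g_F$ is the desired homogeneous $(\lambda, n+m)$-Einstein one-dimensional extension of $M$ (one dimension over $M$ in the base, with an $m$-dimensional space-form fiber attached).

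The key step is thus to locate the warping function $w$ on the Einstein extension. The natural candidate comes directly from the soliton structure: the one-dimensional extension $\bar M$ is built as (a metric on) $\Real \ltimes G$ (or $\Real\times M$), and the new coordinate $s$ along the $\Real$-factor is exactly the "potential direction" along which the derivation $D$ acts. I would take $w = w(s)$ to depend only on this coordinate, and reduce the Hessian equation $\Hess w = \frac{w}{m}(\Ric - \lambda g) = 0$ (since $\bar g$ is $\lambda$-Einstein, the right side vanishes) to an ODE in $s$. On a homogeneous space where $\partial_s$ has constant length and the mean curvature of the level sets $\{s = \mathrm{const}\}$ is a constant $h$ determined by $\tr D$, the equation $\Hess w = 0$ becomes $w'' = 0$ and $w' \cdot h = 0$-type conditions; more precisely one gets $w(s) = c_1 e^{\alpha s}$ for an exponent $\alpha$ fixed by the geometry, which is positive and non-constant exactly because the soliton is non-trivial (so $D\ne 0$, $h\ne 0$). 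Here one should invoke the structural results on one-dimensional extensions from the authors' earlier work on warped product Einstein metrics, which classify when an Einstein manifold carries such a function; the point is that the extension produced in Theorem \ref{thm:Einstein1extsoliton} is precisely of the form that admits it.

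The main obstacle I anticipate is bookkeeping the two scalars — the Einstein constant $\lambda$ and the "fiber dimension" $m$ — against the single scalar produced by the soliton (essentially $\lambda_{\mathrm{sol}}$ and $\tr D$), and checking that for the prescribed $m>0$ one can rescale so that the exponent $\alpha$ and the constants match the $(\lambda,n+m)$-Einstein equation exactly, with $w>0$. One must also verify that $w$, defined a priori only as a function of $s$, genuinely satisfies the full tensorial Hessian identity on $\bar M$ and not merely its $\partial_s\partial_s$ component; this uses that $\nabla_{\!X}\partial_s$ for $X$ tangent to the level sets is controlled by the second fundamental form, which on these homogeneous extensions is a multiple of the identity (a consequence of normality of the derivation), so $\Hess w (X,Y)$ is a multiple of $\langle X,Y\rangle$ times $w'$, and this multiple must be reconciled with $\frac{w}{m}(\Ric-\lambda g)(X,Y)=0$. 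Finally, homogeneity of the warped product extension should be inherited from homogeneity of $\bar M$ together with the fact that $w$ is constant on $\bar M$-homogeneity orbits up to the flow generated by $D$, exactly as in the Einstein case; I would phrase this by exhibiting the transitive isometry group explicitly as in Theorem \ref{thm:Einstein1extsoliton}.
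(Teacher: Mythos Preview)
Your approach has a genuine gap. You propose to take the Einstein one-dimensional extension $(\bar M,\bar g)$ from Theorem~\ref{thm:Einstein1extsoliton} and then find a non-constant positive $w\in W_{\lambda,n+m}(\bar M,\bar g)$ on it. But since $\bar g$ is $\lambda$-Einstein, the equation $\Hess w=\tfrac{w}{m}(\Ric-\lambda g)$ reduces to $\Hess w=0$, i.e.\ $\nabla w$ is parallel. A non-constant such $w$ exists only if $(\bar M,\bar g)$ splits off a Euclidean line. It does not: the shape operator of the level hypersurfaces $\{r=\mathrm{const}\}$ is $T=-\alpha S$ (Proposition~\ref{prop:1extensionST}), and $S\ne 0$ because the soliton is non-trivial. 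Your claim that normality of $D$ forces the second fundamental form to be a multiple of the identity is incorrect: normality means $[S,A]=0$, not that $S$ is scalar. Hence $W_{\lambda,n+m}(\bar M,\bar g)$ contains only constants and your bootstrap stalls. (The exponential ansatz $w=c_1e^{\alpha s}$ you write down is also inconsistent with your own reduction $w''=0$.)

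The paper's route differs in an essential way: for each $m$ it builds a \emph{different} one-dimensional extension $(M,g_m)$ of the soliton, choosing the structure constant $\alpha^2=1/(\tr S-\lambda m)$ rather than $\alpha^2=1/\tr S$. This $g_m$ is \emph{not} Einstein; its Ricci tensor differs from $\lambda g_m$ by precisely $\tfrac{m}{w}\Hess w$ for $w=e^{Lr}$ with $L=\lambda\alpha$, and the verification is a direct computation from Lemma~\ref{lem:RicciGKHK} (see Theorem~\ref{thm:WPE1extension} and Corollary~\ref{cor:SolitonQuasiEinstein}). Homogeneity of the resulting warped product is established separately in Theorem~\ref{thm:homogeneousE}. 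The point you missed is that the one-dimensional extension itself, not just the warping function, must depend on $m$.
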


Theorems \ref{thm:Einstein1extsoliton} and \ref{thm:WPE1extensionintro} imply the following
\begin{cor}\label{cor:SARiccisiltonDnormal}
Let $m \geq 0$ be an integer. A non-flat, non-trivial normal semi-algebraic Ricci soliton on a homogeneous space $N^{n-1}$ admits a homogeneous Einstein extension $E^{n+m}$.
\end{cor}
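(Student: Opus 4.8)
The plan is to deduce the corollary by combining the two preceding theorems, distinguishing the cases $m=0$ and $m\ge 1$. A one-dimensional extension of $N^{n-1}$ has dimension $n$, so the case $m=0$ is literally the content of Theorem \ref{thm:Einstein1extsoliton}: the Einstein one-dimensional extension produced there is the required homogeneous $E^{n}$. Thus assume $m\ge 1$, fix a constant $\lambda<0$, and apply Theorem \ref{thm:WPE1extensionintro} to obtain a homogeneous $(\lambda,n+m)$-Einstein one-dimensional extension $(M^n,g_M)$ of $N^{n-1}$.

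Suppose first $m\ge 2$. Being a $(\lambda,n+m)$-Einstein manifold, $M^n$ carries a positive $w\in W_{\lambda,n+m}(M^n,g_M)$, and by the characterization recalled just before Theorem \ref{thm:WPE1extensionintro} the warped product
\[
E \;=\; M^n\times F^m,\qquad g_E \;=\; g_M + w^2 g_F,
\]
is a $\lambda$-Einstein manifold of dimension $n+m$, where $(F^m,g_F)$ is the appropriate space form; since $\lambda<0$ this $F^m$ is non-compact, i.e.\ $\Real^m$ or a real hyperbolic space. It therefore only remains to check that $E$ is homogeneous, and this is the main point.

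Homogeneity of $E$ does not follow formally — a warped product of homogeneous spaces is generally not homogeneous — and must be extracted from the concrete form of the extension. The one-dimensional extension realizes $M^n$ as $\Real\times N^{n-1}$ with a metric for which an enlarged group (incorporating the flow of the normal soliton derivation $D$) acts transitively and for which the warping function $w$ depends only on the $\Real$-parameter $t$ (in fact exponentially). One then puts on $E=\Real\times(N^{n-1}\times F^m)$ the analogous structure: for $F=\Real^m$ one lets $\Real$ act on the added $\Real^m$-factor by the scalings prescribed by $w(t)$; the resulting space is homogeneous (a solvmanifold when $N$ is a solvable Lie group) with metric $g_E$, and for a hyperbolic fiber one first absorbs the transitive solvable group of $F^m$ into the construction. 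Hence $E$ is homogeneous. Finally, the case $m=1$ needs separate attention, since $F^1=\Real$ carries no curvature parameter and $M^n\times_w\Real$ is Einstein precisely when in addition $\scal_{g_M}=(n-1)\lambda$; I would verify that the $(\lambda,n+1)$-Einstein extension delivered by Theorem \ref{thm:WPE1extensionintro} satisfies this identity — it should follow by comparing the trace of the $(\lambda,n+1)$-Einstein equation with the fiber equation and using that $\scal_{g_M}$ is constant — or else handle $m=1$ directly through the Lie-theoretic structure results (Theorems \ref{thm:EinsteinExtDnormal} and \ref{thm:LieStruc}).
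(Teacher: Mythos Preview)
Your overall strategy matches the paper's: split into $m=0$ (handled by Theorem~\ref{thm:Einstein1extsoliton}) and $m\ge 1$ (use Theorem~\ref{thm:WPE1extensionintro} to produce the $(\lambda,n+m)$-Einstein extension $M^n$, then warp with a fiber $F^m$). The difference is that the paper dispatches the entire $m\ge 1$ step by a single appeal to Theorem~\ref{thm:homogeneousE}, which shows at once that $E=M\times_w F$ is both $\lambda$-Einstein and homogeneous for every integer $m>0$. You do not invoke this theorem; your ad hoc argument for the homogeneity of $E$ is essentially a sketch of the relevant case of its proof, but as written it is incomplete.

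Two concrete points where your version goes astray. First, the fiber is always $\mathbb{R}^m$, never hyperbolic: the curvature of $F$ is $\mu(w)$, not $\lambda$, and in the non-Einstein case Lemma~\ref{lem:basehomogeneous} gives $\mu(w)=0$ (equivalently, compute directly with $w=e^{Lr}$). So the hyperbolic branch you mention does not arise. Second, your separate treatment of $m=1$ is unnecessary: the warped product $M\times_w\mathbb{R}$ is $\lambda$-Einstein precisely when $\mu(w)=0$, which holds here as just noted, so no extra scalar-curvature identity needs to be verified. Theorem~\ref{thm:homogeneousE} already covers $m=1$ uniformly.
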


\begin{rem}
Another interesting consequence of our construction is that every normal semi-algebraic Ricci soliton  can be isometrically embedded into a homogeneous Einstein manifold with an arbitrary codimension. 
\end{rem}

\begin{rem}
One of the main tools in the proof of Theorems \ref{thm:Einstein1extsoliton} and \ref{thm:WPE1extensionintro} is a simple construction, called a \emph{one-dimensional extension} of a homogeneous space, see the definition in section 2. It is a natural generalization of the semi-direct product of a Lie group with the real line $\Real$, i.e., an abelian extension with the real line. The Ricci curvature of the extension enjoys very nice properties when the original homogeneous space has a normal semi-algebraic Ricci soliton structure, see Lemma \ref{lem:RicciGKHK}.
\end{rem}

We also prove  that a converse of both Theorems \ref{thm:Einstein1extsoliton} and \ref{thm:WPE1extensionintro} hold, i.e., if the one-dimensional extension of a semi-algebraic Ricci soliton by $D$  is Einstein or $(\lambda, n+m)$-Einstein, then $D$ is normal. See Theorems \ref{thm:EinsteinExtDnormal} and \ref{thm:WPEextDnormal}.  In the case of $(\lambda, n+m)$-Einstein metrics we also prove the following characterization of the spaces in Theorem \ref{thm:WPE1extensionintro}. 

\begin{thm} \label{thm:WPEradial}
Let $(M, g)$ be a non-trivial  homogeneous   $(\lambda, n+m)$-Einstein metric. Then $M$ is a one-dimensional extension of a normal semi-algebraic Ricci soliton if and only if $\nabla_{\nabla w} \mathrm{Ric}=0$.
\end{thm}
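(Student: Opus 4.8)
My plan is to analyze the foliation of $M$ by the level sets of $w$ and to relate the radial condition $\nabla_{\nabla w}\Ric=0$ to the geometry of this foliation. I will freely use the standard first-order identities for a homogeneous $(\lambda,n+m)$-Einstein metric: since $M$ is homogeneous, $\scal$ is constant, so $\Delta w$ is a constant multiple of $w$; consequently $\nabla w$ is an eigenvector of $\Ric$ with a constant eigenvalue $\rho$, one has $|\nabla w|^{2}=\tfrac{\rho-\lambda}{m}w^{2}+\mu$ for a constant $\mu$, and $u:=\nabla w/|\nabla w|$ is a unit geodesic field on the dense open set where $\nabla w\neq 0$ (so that $w'=|\nabla w|$ along the $u$-geodesics). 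Differentiating $\Hess w=\tfrac{w}{m}(\Ric-\lambda g)$ in directions tangent to a level set $N_{r}=\{\,w=r\,\}$ gives its shape operator as $S_{r}=-\tfrac{w}{m|\nabla w|}(\Ric|_{TN_{r}}-\lambda I)$; in particular $\Ric$ preserves $TN_{r}$ (because $u$ is an eigenvector), and up to the explicit scalar function $\psi(r):=m|\nabla w|/w$ one has $\psi(r)\,S_{r}=\lambda I-\Ric|_{TN_{r}}$. The implication ``$\Leftarrow$'', that the radial condition forces $M$ to be a one-dimensional extension, is the substantial direction.

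For ``$\Rightarrow$'': if $M$ is the one-dimensional extension of a normal semi-algebraic Ricci soliton $N$ by $D$, then by the construction behind Theorem \ref{thm:WPE1extensionintro} the metric has the form $dr^{2}+h_{r}$ and the warping function depends only on the extension parameter, $w=w(r)$; hence $\nabla w=w'\partial_{r}$ and it suffices to show $\nabla_{\partial_{r}}\Ric=0$. Using the connection and curvature formulas underlying Lemma \ref{lem:RicciGKHK}, $\partial_{r}$ is a geodesic field and an eigenvector of $\Ric$, so $\Ric$ is block diagonal for $TN_{r}\oplus\Real\partial_{r}$; in the frame adapted to the extension --- which is carried between slices by the $\Real$-factor of the isometry group --- $\Ric$ has constant components; and $\nabla_{\partial_{r}}$ maps the tangential frame to itself via the skew part $D_{a}:=\tfrac{1}{2}(D-D^{t})$ of $D$ (with respect to the metric). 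Assembling these facts, $\nabla_{\partial_{r}}\Ric$ reduces on $TN_{r}$ to the commutator $[D_{a},\Ric|_{TN_{r}}]$ and vanishes on the remaining components; and by Lemma \ref{lem:RicciGKHK} the operator $\Ric|_{TN_{r}}$ is built from the metric, $D+D^{t}$, and the curvature data of $N$, all of which commute with $D_{a}$ precisely because $D$ is normal, i.e. because $[D,D^{t}]=0$ on the tangent space. Hence $\nabla_{\partial_{r}}\Ric=0$, so $\nabla_{\nabla w}\Ric=0$.

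For ``$\Leftarrow$'': assume $\nabla_{\nabla w}\Ric=0$. Along $u$ this says $\Ric$ is parallel, so, transporting along $u$-geodesics, $\Ric|_{TN_{r}}$ is a fixed symmetric operator in a parallel orthonormal frame, whence in that frame $S_{r}$ equals a fixed symmetric operator $B_{0}$ divided by $\psi(r)$. Using the flow of $u$ to identify a neighbourhood of a regular level set $N_{0}$ with $N_{0}\times I$, the metric becomes $dr^{2}+h_{r}$ with $w=w(r)$ solving $(w')^{2}=\tfrac{\rho-\lambda}{m}w^{2}+\mu$ and $\partial_{r}h_{r}=2h_{r}(S_{r}\cdot,\cdot)$; the rigidity just noted integrates this, so that $h_{r}$ is obtained from $h_{0}$ by the flow of the single operator $B_{0}$ (up to a reparametrization of $r$ absorbing $\psi$). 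This is exactly the form of the metric of a one-dimensional extension of $N_{0}$ by the derivation determined by $B_{0}$ (see the definition in Section 2). The genuinely delicate step, and the main obstacle, is to show that each level set $N_{r}$ is itself homogeneous: here I would combine the homogeneity of $M$ with the facts that $\nabla w$ is the $\rho$-eigendirection of the Ricci operator and that $w$ is determined by its $1$-jet (the equation for $w$ being linear and overdetermined), to show that an isometry of $M$ carrying one point of $N_{r}$ to another can be arranged to preserve $w$ and hence restricts to an isometry of $N_{r}$; the case where $\rho$ has higher multiplicity is handled by first splitting off the corresponding factor of $M$.

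Having realized $M$ as a $(\lambda,n+m)$-Einstein one-dimensional extension of the homogeneous space $N:=N_{0}$, I would then invoke the converse Theorem \ref{thm:WPEextDnormal} to conclude both that $N$ carries a semi-algebraic Ricci soliton structure and that the extension derivation is normal; alternatively, normality follows directly from the fact that each $S_{r}$ is symmetric for the induced metric, so that the skew and symmetric parts of the extension derivation commute. The non-triviality hypothesis guarantees $w$ non-constant, so the foliation is a genuine one-parameter family and the extension is one-dimensional, while non-flatness ensures the extension is nontrivial.
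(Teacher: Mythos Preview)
Your ``$\Rightarrow$'' direction is essentially the paper's computation (Proposition~\ref{prop:RadialRic}): differentiating the $(\lambda,n+m)$-Einstein equation in the $\nabla w$ direction and using $\nabla_\xi T=\alpha^2[S,A]$ yields $\nabla_{\nabla w}\Ric=(mwL\alpha^2)[S,A]$, which vanishes when $D$ is normal. Your frame argument is a correct paraphrase of this.

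Your ``$\Leftarrow$'' direction, however, has two genuine gaps. First, your appeal to Theorem~\ref{thm:WPEextDnormal} to conclude that $N$ \emph{is} a semi-algebraic Ricci soliton is circular: that theorem's hypothesis is precisely that $N$ is already a semi-algebraic soliton, and its conclusion concerns only the normality of $D$. It does not manufacture the soliton equation $\Ric^N=\lambda I+S$ on $N$. Second, your alternative claim that ``normality follows directly from the fact that each $S_r$ is symmetric'' is false: the shape operator is always symmetric, so this says nothing about $[S,A]$.

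The paper's route is structurally different and much shorter. It first proves the structure theorem (Theorem~\ref{thm:structureWPEextension}): \emph{every} non-trivial homogeneous $(\lambda,n+m)$-Einstein manifold is a one-dimensional extension of some homogeneous $N$ satisfying $\Ric^N=\lambda I+S+\tfrac{1}{\tr S-\lambda m}[S,A]$---this holds with \emph{no} hypothesis on $\nabla_{\nabla w}\Ric$, using the machinery of Section~4 and \cite{HPWuniqueness}. Then Proposition~\ref{prop:RadialRic} gives the explicit formula $\nabla_{\nabla w}\Ric=(mwL\alpha^2)[S,A]$. The theorem is now immediate: $\nabla_{\nabla w}\Ric=0\iff[S,A]=0$, and when $[S,A]=0$ the displayed equation for $\Ric^N$ collapses to the semi-algebraic soliton equation with $D$ normal. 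Your attempt to use the radial condition itself to integrate the extension structure is therefore unnecessary, and it leaves you without the crucial Ricci identity on $N$ that you need to close the argument.
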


Combining this result with Theorem \ref{thm:Einstein1extsoliton} gives us the following corollary. 

\begin{cor} \label{cor:WPEradial}
A homogeneous warped product Einstein metric, $(E, g_E)$ of the form  $g_E = g_M + w^2 g_F$  with $\nabla _{\nabla w} \mathrm{Ric} =0$ on $M$ is diffeomorphic to a product of Einstein metrics.
\end{cor}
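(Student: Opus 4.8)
The plan is to read off the diffeomorphism type of $E$ from the structure theorem, Theorem~\ref{thm:WPEradial}, and then install an Einstein metric on the base factor using Theorem~\ref{thm:Einstein1extsoliton}, keeping track only of diffeomorphism type. First, if the warping function $w$ is constant, then $E=M\times F$ is a Riemannian product (absorbing the constant into $g_F$), and since $E$ is $\lambda$-Einstein its Ricci tensor splits, giving $\Ric_M=\lambda g_M$ and $\Ric_F=\lambda g_F$; thus $E$ is already a product of Einstein metrics. Hence we may assume $w$ is non-constant, so that $(M,g_M)$ is a non-trivial homogeneous $(\lambda,n+m)$-Einstein metric with $\nabla_{\nabla w}\Ric=0$.

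By Theorem~\ref{thm:WPEradial}, $M$ is then a one-dimensional extension of a normal semi-algebraic Ricci soliton $(N,g_N)$. A one-dimensional extension is, as a smooth manifold, the product of its base with $\Real$ (it generalizes the semidirect product of a Lie group with $\Real$; see Section~2), so $M\cong N\times\Real$ and
\begin{equation*}
E \;=\; M\times F \;\cong\; N\times\Real\times F .
\end{equation*}
Suppose first that $(N,g_N)$ is non-flat and non-trivial. Then Theorem~\ref{thm:Einstein1extsoliton} supplies an Einstein one-dimensional extension $(\bar M,\bar g)$ of $N$, and $\bar M\cong N\times\Real$ as a smooth manifold. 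Since the fiber $F$ is a space form, hence Einstein, we conclude that $E$ is diffeomorphic to the Riemannian product $\bar M\times F$ of two Einstein manifolds, which is the assertion.

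It remains to dispose of the cases in which the base soliton fails the hypotheses of Theorem~\ref{thm:Einstein1extsoliton}, i.e.\ when $(N,g_N)$ is flat or is a trivial (Einstein) soliton. If $N$ is flat, then $N\times\Real$ carries a flat --- in particular Einstein --- metric, so $E\cong(N\times\Real)\times F$ is again diffeomorphic to a product of Einstein metrics. If $N$ is a non-flat trivial soliton, then $N$ is Einstein, and a short direct inspection of the one-dimensional extension together with the $(\lambda,n+m)$-Einstein equation and the condition $\nabla_{\nabla w}\Ric=0$ shows that $M$ is a Riemannian product $N\times\Real$ with $N$ a $\lambda$-Einstein metric and $w=w(t)$ a function of the $\Real$-coordinate only, so that $\Real\times_w F$ is a space form and $E=N\times(\Real\times_w F)$ is itself a product of Einstein metrics. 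Granting Theorems~\ref{thm:WPEradial} and~\ref{thm:Einstein1extsoliton}, the only point requiring genuine (if brief) work is this analysis of the degenerate cases; everything else is bookkeeping of diffeomorphism types, and the reason the construction yields only a diffeomorphism rather than an isometry is that the Einstein metric $\bar g$ produced by Theorem~\ref{thm:Einstein1extsoliton} is in general unrelated to the one-dimensional extension metric $g_M$.
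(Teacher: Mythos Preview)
Your proof is correct and follows the same route as the paper's: dispose of constant $w$, invoke Theorem~\ref{thm:WPEradial} to realize $M$ as a one-dimensional extension of a normal semi-algebraic soliton $N$, then re-equip $N\times\Real$ with the Einstein one-dimensional extension metric and pair it with an Einstein fiber. The paper's argument is terser in two respects: it cites Theorem~\ref{thm:EinsteinConstruction} directly rather than Theorem~\ref{thm:Einstein1extsoliton}, and it uses Lemma~\ref{lem:basehomogeneous} to identify the fiber as $\Real^m$, writing the product explicitly as $\widetilde g_M + g_{H^m}$ with $H^m$ the hyperbolic space of Ricci curvature $\lambda$. Your separate treatment of the degenerate cases ($N$ flat or $N$ a trivial soliton with $S=0$) is in fact more careful than the paper's proof, since Theorem~\ref{thm:EinsteinConstruction} requires $\tr(S)>0$ and so does not literally apply when $S=0$; your observation that $S=0$ forces $T=0$, hence $\nabla r$ is parallel and $M$ splits as a Riemannian product $N\times\Real$, cleanly closes that gap.
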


\begin{rem}
We do not know of examples of homogeneous   $(\lambda, n+m)$-Einstein metrics which do not satisfy $\nabla _{\nabla w} \mathrm{Ric} =0$. In Theorem \ref{thm:structureWPEextension} we also give a structure theorem for the case where  $\nabla _{\nabla w} \mathrm{Ric} \neq 0$.   $M$ must still be a one-dimensional extension of a space $N$, but $N$ satisfies a slightly different equation than the semi-algebraic soliton equation. We do not know if there are examples that satisfy this equation but are not Ricci solitons. 
\end{rem}

\smallskip

The paper is organized as follows. In section 2  we review the definition of semi-algebraic Ricci solitons, define one-dimensional extensions of homogeneous spaces, and recall some useful facts of $(\lambda, n+m)$-Einstein manifolds. In section 3 we study two special cases of one-dimensional extensions: when the extension is Einstein and when the extension is  $(\lambda, n+m)$-Einstein. The study of the first case gives a proof of Theorem \ref{thm:Einstein1extsoliton}. In section 4 we apply the results on $(\lambda, n+m)$-Einstein metrics with symmetries in our earlier paper \cite{HPWuniqueness} to study homogeneous $(\lambda, n+m)$-Einstein manifolds. In section 5 we characterize the structure of homogeneous $(\lambda, n+m)$-Einstein manifolds and prove Theorems \ref{thm:WPE1extensionintro} and \ref{thm:WPEradial}. In section 6 we specialize our study of general homogeneous spaces to Lie groups with left invariant metrics.   In the appendix, we also give an alternative approach to semi-algebraic Ricci solitons in terms of algebras of vector fields and propose a definition of semi-algebraic Ricci solitons on non-homogeneous spaces. 

\smallskip

\textbf{Acknowledgements.} Part of the work was done when the first author was at Lehigh University and he is very grateful to the institute for their hospitality.   

\medskip
\section{Preliminaries}

This section is separated into three subsections. In the first subsection we recall the definiton of  semi-algebraic Ricci solitons. In the second subsection we consider the useful construction of a \emph{one-dimensional extension} of a homogeneous space and study how its curvature relates to those on the original manifold. In the third subsection we collect a few relevant facts about $(\lambda, n+m)$-Einstein manifolds from \cite{HPWLcf,HPWuniqueness}.

\subsection{Semi-algebraic Ricci solitons} 
We recall the definition of a homogeneous semi-algebraic soliton given in \cite{Jablonski}. First we fix some notation.     Let $H$ be a Lie group and  $(M = H/K, g)$ be an $H$-homogeneous space.  Let $K$ is the isotropy subgroup at a fixed point $x\in M$ and $\frakh, \frakk$ be the Lie algebras of $H$ and $K$ respectively.   Let $\Phi_t \in Aut(H)$ be a family of automorphisms of $H$ such that $\Phi_t(K) = K$;  $\Phi_t$ gives rise to a well defined diffeomorphism $\phi_t$ of $H/K$ defined by 
\[ \phi_t(hK) = \Phi_t(h) K \qquad h \in H. \]

\begin{definition} \label{def:Jabl} \cite[Definition 1.4]{Jablonski}
$(H/K,g)$ is a semi-algebraic Ricci soliton with respect to $H$ if there exists a family of automorphisms $\Phi_t \in Aut(H)$ such that $\Phi_t(K) = K$ and 
\[ g_t = c(t) \phi^*_t(g) \]
is a solution to the Ricci flow
\[ \frac{\partial}{\partial t} g = - 2 \mathrm{Ric}_g \] on $H/K$ with $g_0=g$.
\end{definition}

Fix an $\mathrm{Ad}(K)$-invariant decomposition $\mathfrak{h} = \mathfrak{p} \oplus \mathfrak{k}$  and let $\mathrm{pr}: \frakh \rightarrow \frakp$ be  the orthogonal projection.  $\frakp$  is  then naturally identified with $T_xM$. Jablonski also proves the following proposition about semi-algebraic solitons. 
   
\begin{prop} \label{prop:Jabl}  \cite[Proposition 2.3]{Jablonski}
If $(H/K,g)$ is a semi-algebraic Ricci soliton with respect to a Lie Algebra $H$ then there exists  a derivation $D \in \Der(\frakh)$ such that
\[
\Ric = \lambda I + \half \left(\mathrm{pr}\circ D + (\mathrm{pr}\circ D)^*\right).
\] 
Here   $^* $ denotes the adjoint with respect to the metric $g$  on $\frakh$.  Moreover, we may assume that $D|_{\frakk} = 0$. 
\end{prop}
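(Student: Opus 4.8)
The plan is to differentiate the defining relation $g_t=c(t)\phi_t^*(g)$ at $t=0$ and read off the identity from the Ricci flow equation. We may assume $\Phi_0=\mathrm{id}$, so that $\phi_0=\mathrm{id}$ and $c(0)=1$. A first useful observation is that each $\phi_t^*(g)$ is again $H$-invariant: since $\Phi_t\in\mathrm{Aut}(H)$ one has $\phi_t\circ\tau_h=\tau_{\Phi_t(h)}\circ\phi_t$, where $\tau_h$ denotes the action of $h\in H$ on $H/K$, and $g$ is $H$-invariant, so $\tau_h^*\phi_t^*(g)=\phi_t^*(\tau_{\Phi_t(h)}^*g)=\phi_t^*(g)$. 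Hence the family $t\mapsto\phi_t^*(g)$ of $H$-invariant metrics, together with its $t$-derivative, is determined by its restriction to $\frakp\cong T_xM$ at $x=eK$. Evaluating the Ricci flow equation $\partial_t g_t=-2\Ric_{g_t}$ at $t=0$ and expanding the left side through $g_t=c(t)\phi_t^*(g)$ gives $c'(0)\,g+\partial_t|_{t=0}\phi_t^*(g)=-2\Ric_g$.

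The heart of the argument is the differential of $\phi_t$ at $x$. Set $D_0:=\tfrac{d}{dt}\big|_{t=0}d\Phi_t|_e$; since $t\mapsto d\Phi_t|_e$ is a curve of Lie algebra automorphisms through the identity, $D_0\in\Der(\frakh)$, and $D_0(\frakk)\subseteq\frakk$ because $\Phi_t(K)=K$. From $\Phi_t\circ\exp=\exp\circ\,d\Phi_t|_e$ one computes, for $X\in\frakp$, that the curve $s\mapsto\phi_t(\exp(sX)K)=\exp\big(s\,d\Phi_t(X)\big)K$ has initial velocity $\mathrm{pr}\big(d\Phi_t(X)\big)$, so $d\phi_t|_x=\mathrm{pr}\circ d\Phi_t|_e$ as an endomorphism of $\frakp$. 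Differentiating $\phi_t^*(g)|_x(X,Y)=g\big((\mathrm{pr}\circ d\Phi_t)X,(\mathrm{pr}\circ d\Phi_t)Y\big)$ at $t=0$ then shows that $\partial_t|_{t=0}\phi_t^*(g)$ is, on $\frakp$, the symmetric bilinear form of the operator $\mathrm{pr}\circ D_0+(\mathrm{pr}\circ D_0)^*$. Substituting into the equation of the previous paragraph and reading it as an identity of endomorphisms of $\frakp$ yields $\Ric=\lambda I+\half\big(\mathrm{pr}\circ D+(\mathrm{pr}\circ D)^*\big)$ with $\lambda:=-c'(0)/2$ and $D:=-D_0\in\Der(\frakh)$.

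For the final assertion, reduce first to the case where $H$ acts effectively (pass to the quotient by the kernel of the action); then $K$ is compact and the isotropy action of $\frakk$ on $\frakp$ is faithful, so $\frakk\cap\frakz(\frakh)=0$. Replace $D$ by its $K$-average $\bar D:=\int_K\Ad(k)\circ D\circ\Ad(k)^{-1}\,dk$ over normalized Haar measure. As the derivations form a linear subspace of $\mathrm{End}(\frakh)$, $\bar D\in\Der(\frakh)$, and $\bar D(\frakk)\subseteq\frakk$. Since $\mathrm{pr}$ commutes with $\Ad(k)$ on the $\Ad(K)$-invariant splitting, each $\Ad(k)|_\frakp$ is orthogonal, and $\Ric_x$ commutes with the isotropy action, the symmetrization $\mathrm{pr}\circ\bar D+(\mathrm{pr}\circ\bar D)^*$ on $\frakp$ equals the $K$-average (by conjugation) of $\mathrm{pr}\circ D+(\mathrm{pr}\circ D)^*=2(\Ric-\lambda I)$, namely $2(\Ric-\lambda I)$ again; hence $\bar D$ satisfies the same identity. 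Finally $\bar D$ is $\Ad(K)$-invariant, so $[\bar D,\ad_Z]=0$ for all $Z\in\frakk$; the derivation identity $[\bar D,\ad_Z]=\ad_{\bar D Z}$ then forces $\ad_{\bar D Z}=0$, i.e.\ $\bar D Z\in\frakz(\frakh)$, and since also $\bar D Z\in\frakk$ we conclude $\bar D|_{\frakk}=0$.

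I expect the normalization and the identification $d\phi_t|_x=\mathrm{pr}\circ d\Phi_t|_e$ (keeping track of which projection and which adjoint appear) to be routine, and the genuinely delicate step to be the last paragraph: the derivation coming directly from $\Phi_t$ need not annihilate $\frakk$, and removing its $\frakk$-part requires averaging over the compact isotropy group—which is harmless precisely because it leaves the symmetrized operator on $\frakp$, equal to $2(\Ric-\lambda I)$, unchanged—followed by an appeal to effectiveness of the action, via $[\bar D,\ad_Z]=\ad_{\bar D Z}$, to see that an $\Ad(K)$-invariant derivation of $\frakh$ must kill $\frakk$.
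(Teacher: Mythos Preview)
The paper does not give its own proof of this proposition: it is stated with a citation to \cite[Proposition~2.3]{Jablonski} and left unproved. The only place the paper touches the argument is in the appendix, where the proof of Proposition~\ref{prop:SASintrinsic} \emph{invokes} Jablonski's Propositions~2.2 and~2.3 to write $g_t=(1-2\lambda t)\phi_{s(t)}^*(g)$ with $\Phi_s=\exp(sD)$ a one-parameter group; that is, even there the present paper takes the conclusion of Proposition~\ref{prop:Jabl} as input rather than reproving it.

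Your argument is a correct direct proof, and it follows what is essentially the natural route (and, as far as one can infer from the appendix, the route Jablonski takes): differentiate $g_t=c(t)\phi_t^*(g)$ at $t=0$, identify $d\phi_t|_x$ with $\mathrm{pr}\circ d\Phi_t|_e$ via $\Phi_t\circ\exp=\exp\circ\, d\Phi_t$, and read off $\Ric=\lambda I+\half(\mathrm{pr}\circ D+(\mathrm{pr}\circ D)^*)$. Your justification that one may take $\Phi_0=\mathrm{id}$ is fine (replace $\Phi_t$ by $\Phi_0^{-1}\Phi_t$ and rescale $c$), and the observation that $\phi_t^*(g)$ remains $H$-invariant is exactly what lets you work at the single point $x$. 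The ``moreover'' clause is handled cleanly: averaging over the compact isotropy $K$ keeps $\bar D$ a derivation, leaves the symmetrized operator on $\frakp$ (which equals $2(\Ric-\lambda I)$ and hence is $\Ad(K)$-equivariant) unchanged, and then $[\bar D,\ad_Z]=\ad_{\bar DZ}=0$ together with effectiveness forces $\bar D|_\frakk=0$. There is nothing substantive to compare against in the paper itself.
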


The condition which will become important in constructing extensions is that the derivation $D$ be a normal operator, at least when projected to $\frakp$.  In particular, we give the following definition. 

\begin{definition} A semi-algebraic Ricci soliton is \emph{normal} if the map $\mathrm{pr} \circ D \circ \mathrm{pr} : \frakg \rightarrow \frakg$ is a normal operator. \end{definition}

\begin{rem} A Ricci soliton on a $H$ homogeneous space is called called \emph{algebraic} if 
\[ \mathrm{Ric} = \lambda I + \mathrm{pr} \circ D \]
for some $D \in \Der(\frakh)$.  Since the Ricci tensor is a symmetric operator, algebraic solitons are always normal.  
\end{rem}

Recall that, if we consider the symmetric and anti-symmetric parts of $D$
\begin{eqnarray}
S &=& \frac{1}{2}(\mathrm{pr} \circ D \circ \mathrm{pr}  + (\mathrm{pr} \circ D \circ \mathrm{pr})^*) \label{eqn:symmD}\\
A &=&  \frac{1}{2}(\mathrm{pr} \circ D \circ \mathrm{pr} - (\mathrm{pr} \circ D \circ \mathrm{pr})^*), \label{eqn:antisymmD}
\end{eqnarray}
the operator $\mathrm{pr}  \circ D \circ \mathrm{pr}$ will be normal if and only if $S$ and $A$ commute, $[S,A]=0$. We will find $[S,A]$ to be an important  term in the calculation of Ricci curvatures of extensions in the next subsection. 

\subsection{One-dimensional extension of homogeneous spaces}
We recall some general facts about extensions of Lie groups and Lie algebras, i.e., semi-direct products. Let $H$ be a Lie group and let $(N,h)$ be an $H$-homogeneous space. By passing to its universal cover if necessary, we may assume that $H$ is simply-connected. We use the same notation for $K$, $\frakh$, $\frakk$, $\frakp$, $\mathrm{pr}$ as in the previous subsection. 
To construct an extension we fix a constant $\alpha\in \Real$ and a derivation of the Lie algebra $D \in \mathrm{Der}(\mathfrak{h})$  which preserves $K$ and consider  the new Lie algebra
\[ \mathfrak{g} = \mathfrak{h} \oplus \mathbb{R} \xi \]
on which the Lie bracket operation is given by
\[ \mathrm{ad}_{\xi}(X) =  \alpha D(X), \qquad \text{for all } X \in \mathfrak{h}.\]
Let $G$ be the simply-connected Lie group with Lie algebra $\mathfrak{g}$ that contains $H$ as a subgroup.  Since $\mathrm{ad}_{\xi}(X) \in \mathfrak{h}$ for any $X\in \mathfrak{h}$, $H$ is a codimension one normal subgroup of $G$ and $G$ is a semi-direct product $G= H \ltimes \mathbb{R}$.  Given the $\mathrm{Ad}(K)$-invariant decomposition $\mathfrak{h} = \mathfrak{p} \oplus \mathfrak{k}$, we have the corresponding  $\mathrm{Ad}(K)$-invariant decomposition $\mathfrak{g} = \mathfrak{q} \oplus \mathfrak{k}$, where $\mathfrak{q} = \mathfrak{p} \oplus \mathbb{R} \xi$, and we identify $G$-invariant metrics with the restriction of $\mathrm{Ad}(K)$-invariant inner products on $\mathfrak{g}$ to $\mathfrak{q}$.

This extension of Lie groups defines a natural extension of homogeneous spaces.
\begin{definition}
Let $(N, h)$ be an $H$-homogeneous space. For a constant $\alpha \in \Real$ and a derivation $D\in \Der(\mathfrak{h})$, the \emph{one-dimensional  extension} of $(N,h)$ is a $G$-homogeneous space $(M,g)$ with $M = G/K$ and
\begin{eqnarray*}
g |_{\mathfrak{p}} &=&  h, \\
g(\xi, X) &=& 0 \qquad \text{for all } X \in \mathfrak{p}, \\
g( \xi, \xi) &=& 1,
\end{eqnarray*}
where $G = H\ltimes \Real$ is the semi-direct product of $H$ and $\Real$ by $D$ and $\alpha$.
\end{definition}

\begin{rem}
Note that for the commutator series we have
\[
\frakh^1 = [\frakh, \frakh] \subset \frakg^1 = [\frakg, \frakg] \subset \frakh.
\]
It follows that $\frakh$ is a solvable Lie algebra if and only if $\frakg$ is solvable.
\end{rem}

In the following we  compute the curvatures of an extension and relate them back to the curvatures of $(N,h)$,  the derivation $D$,  and  the constant $\alpha$.  To do so we consider the codimension one submanifold $H/K \subset M = G/K$ which is the $H$ orbit at $x \in M$. The vector $\xi$ is a unit normal vector to $H/K$ at $x$. Using left translations of $H$ from $x$ we obtain a unit normal vector field to $H/K$  which is also denoted by $\xi$. The second fundamental form of $N \subset M$ is then given by
\[
\II_x (X, Y) = g(T(X), Y) = g(\nabla^M_X \xi, Y)
\]
where $T: T_x N \rightarrow T_x N$ is the shape operator. Recall that $S$ and $A$ denote  symmetric and anti-symmetric parts of $D$, restricted to $\frakp \simeq T_xM$, 
\begin{eqnarray}
S &=& \frac{1}{2}(\mathrm{pr} \circ D + \mathrm{pr} \circ D^*) \label{eqn:symmD}\\
A &=&  \frac{1}{2}(\mathrm{pr} \circ D - \mathrm{pr} \circ D^*), \label{eqn:antisymmD}
\end{eqnarray}

The next proposition relates the tensors $S$ and $A$ to the shape operator $T$. 

\begin{prop} \label{prop:1extensionST}
Let $(M,g)$ be the one-dimensional extension of $(N,h)$ with
\[ \mathrm{ad}_{\xi}(X) = \alpha D(X) \qquad \text{for all } X \in \mathfrak{h}.\]
Then
\begin{eqnarray*}
T &=&   -\alpha S \\
\nabla_{\xi} T &=&  \alpha^2 [S,A].
\end{eqnarray*}
\end{prop}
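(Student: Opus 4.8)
The plan is to compute the Levi-Civita connection of the semi-direct product metric $g$ on $\frakg = \frakh \oplus \Real\xi$ using Koszul's formula, and then read off the shape operator $T$ of $N = H/K \subset M$ and its covariant derivative along $\xi$. First I would record the bracket relations: $[\xi, X] = \alpha D(X)$ for $X \in \frakh$, while the brackets among elements of $\frakh$ are unchanged. Since $\xi$ is a unit normal field along $N$ obtained by left translation, $T(X) = \mathrm{pr}\,\nabla^M_X \xi$ and, using that $\xi$ has constant length, $g(\nabla^M_X \xi, \xi) = 0$, so $\nabla^M_X \xi = T(X) \in \frakp$. Koszul's formula for left-invariant vector fields on a Lie group with invariant metric gives
\[
2\,g(\nabla_X \xi, Y) = g([X,\xi],Y) - g([\xi,Y],X) + g([Y,X],\xi),
\]
and since $[Y,X] \in \frakh$ is orthogonal to $\xi$, the last term drops. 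The first two terms give $-g(\alpha D(X), Y) - g(\alpha D(Y), X) = -\alpha\,g\big((\mathrm{pr}\circ D + (\mathrm{pr}\circ D)^*)X, Y\big) = -2\alpha\, g(SX,Y)$ after restricting to $\frakp$, yielding $T = -\alpha S$.

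For the second identity I would compute $\nabla_\xi T$ as a covariant derivative of the $(1,1)$-tensor $T$, i.e. $(\nabla_\xi T)(X) = \nabla_\xi(T X) - T(\nabla_\xi X)$, where all terms are taken inside $M$ and then projected appropriately. The key subcomputation is $\nabla^M_\xi X$ for $X \in \frakp$: by Koszul, $2\,g(\nabla_\xi X, Z) = g([\xi,X],Z) - g([X,Z],\xi) + g([Z,\xi],X)$ for $Z \in \frakh$, and $2\,g(\nabla_\xi X, \xi) = g([\xi,X],\xi) = 0$. Working modulo $\frakk$ (using $D|_\frakk = 0$ and the $\mathrm{Ad}(K)$-invariance of the decomposition so that $\mathrm{pr}$ interacts cleanly with $D$), the terms $g([X,Z],\xi)$ vanish and one gets $\nabla^M_\xi X = \alpha A X$ up to terms that will cancel — here $A$ appears because $\nabla_\xi X - \nabla_X \xi = [\xi, X] = \alpha D X$, and combining with $\nabla_X \xi = -\alpha S X$ forces $\nabla_\xi X = \alpha D X - \alpha S X = \alpha A X$ on the $\frakp$-component. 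Since $T = -\alpha S$ is a left-invariant tensor along $N$, its $\xi$-derivative is $(\nabla_\xi T)(X) = -\alpha[\nabla_\xi(SX) - S(\nabla_\xi X)] = -\alpha[\alpha A(SX) - S(\alpha A X)] = -\alpha^2(AS - SA)X = \alpha^2[S,A]X$.

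The main obstacle I anticipate is bookkeeping the $\frakk$-components and the non-left-invariance of the individual terms: $T$ is genuinely a tensor on $N$ defined pointwise via left translations of $H$, not of $G$, so one must be careful that "$\nabla_\xi$ of a left-$H$-invariant tensor" is computed correctly and that the $\frakk$-valued correction terms in $\nabla^M_\xi X$ (which measure the failure of $X$ to be parallel) either lie in $\frakk$ and hence do not contribute to the $\frakp$-valued shape operator, or cancel in pairs. Using $D|_\frakk = 0$ and that $\mathrm{pr}\circ D \circ \mathrm{pr}$ already encodes the relevant operator should streamline this; I would also double-check the identity $[\mathrm{pr}\circ D, \mathrm{pr}\circ D^*]$ versus $[S,A]$, noting $[S,A] = \frac14[P + P^*, P - P^*] = \frac14(2[P^*,P]) = \frac12[P^*,P]$ where $P = \mathrm{pr}\circ D\circ \mathrm{pr}$, and making sure the factor $\alpha^2$ and sign come out as stated. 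Once the connection formulas $\nabla_X\xi = -\alpha SX$ and $\nabla_\xi X = \alpha A X$ (mod $\frakk$) are established, the proposition follows by the direct tensorial computation above.
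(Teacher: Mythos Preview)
Your proposal is correct and follows essentially the same route as the paper: establish $\nabla_X\xi=-\alpha SX$ and $\nabla_\xi X=\alpha AX$ on $\frakp$, then compute $(\nabla_\xi T)(X)=\nabla_\xi(TX)-T(\nabla_\xi X)$ directly. Your use of torsion-freeness, $\nabla_\xi X=[\xi,X]+\nabla_X\xi=\alpha DX-\alpha SX=\alpha AX$, is exactly equivalent to the paper's decomposition of $\alpha D=\ad_\xi=-\nabla^G_\cdot\xi+\nabla^G_\xi\cdot$ into its symmetric and skew parts.

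The one presentational difference worth noting is how the $\frakk$-components are handled. The paper lifts everything to the Lie group $G$ equipped with a left-invariant metric making $G\to G/K$ a Riemannian submersion, does the connection computation there (where there is no isotropy to worry about), and then observes that for basic horizontal fields $X,\xi$ one has $\nabla_X\xi=\mathrm{pr}(\nabla^G_X\xi)$ and $\nabla_\xi X=\mathrm{pr}(\nabla^G_\xi X)$. This disposes of the $\frakk$-bookkeeping you flag as an obstacle in a single stroke, and is slightly cleaner than tracking $D|_\frakk=0$ and $\mathrm{Ad}(K)$-invariance by hand inside Koszul's formula. Either way the final tensorial computation is identical; your side remark about $[S,A]=\tfrac12[P^*,P]$ is not needed for the argument.
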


\begin{proof}
We start with the calculation on the Lie group $G$, endowed with a metric so that the quotient map $G \rightarrow G/K$ is a Riemannian submersion. Then we have
\begin{eqnarray*}
\alpha D(X) = \mathrm{ad}_{\xi}(X) = - \nabla^G_X \xi + \nabla^G_{\xi} X. 
\end{eqnarray*}
Since $\nabla^G_{\cdot} \xi$ is the shape operator of $H \subset G$ which is symmetric and $\nabla^G_{\xi} \cdot$ is skew-symmetric, we have
\begin{eqnarray*}
\nabla^G_{\xi} X  &=& \frac{\alpha}{2} \left( D - D^* \right) (X)\\
\nabla^G_{X} \xi &=& - \frac{\alpha}{2} \left( D + D^* \right) (X).
\end{eqnarray*}

Now choosing $X$ in $\mathfrak{p}$ is equivalent to $X$ being a basic horizontal field of the Riemannian submersion $G \rightarrow G/K$, see \cite[Chapter 3]{CheegerEbin}.  The unit vector $\xi$ is also basic and horizontal.  Therefore, we have
\begin{eqnarray*}
\nabla_{\xi} X   &=&  \mathrm{pr}\left( \nabla^G_{\xi} X\right) = \alpha A \\
T(X)  &=& \nabla_{X} \xi  =  \mathrm{pr}\left( \nabla^G_{X} \xi \right)=  - \alpha S.
\end{eqnarray*}
This also gives us
\begin{eqnarray*}
(\nabla_{\xi} T)(X) &=& \nabla_{\xi}(T(X)) - T\left( \nabla_{\xi} X \right) \\
&=& ((\alpha  A )\circ (- \alpha S) ) + (\alpha S) \circ(\alpha A) )(X)\\
&=& \alpha^2 [S,A](X)
\end{eqnarray*}
which finishes the proof.
\end{proof}

\begin{rem}  While the tensor $S$ is defined on a chosen tangent space, $T_xM$,  the formula $T=-\alpha S$, defines  $S$ geometrically over the entire manifold $M$ and allows us to define covariant derivatives  and the divergence of the tensor $S$.  Note that, since the unit normal vector $\xi$ is invariant under the group $H$, so is $T$ and thus $S$. 
\end{rem}

Combining these formulas with the Gauss, Codazzi, and radial curvature equations, see \cite{PetersenGTM}, we have the following formulas for the Ricci curvatures.

\begin{lem} \label{lem:RicciGKHK}
Let $(N,h)$ be an $H$-homogeneous space, $D$ a derivation of $\mathfrak{h}$ , and let $(M,g)$  the one-dimensional extension with
\[
\mathrm{ad}_{\xi}(X) = \alpha D(X) \qquad \text{for all }X \in \mathfrak{h}.
\]
Then the  Ricci tensor of $(M,g)$ is given by
\begin{eqnarray}
\mathrm{Ric}\left(\xi,\xi\right) & = & - \alpha^2 \tr(S^2)   \notag\\
\mathrm{Ric}\left(X,\xi\right) & = & - \alpha \mathrm{div}(S)  \label{eqn:RicS} \\
\mathrm{Ric}\left(X,X\right) & = & \mathrm{Ric}^{N}(X,X)-\left(\alpha^2 \mathrm{tr}S\right)h\left(S(X),X\right) - \alpha^2 h([S,A](X), X) \notag
\end{eqnarray}
\end{lem}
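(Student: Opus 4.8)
The plan is to read off the three Ricci components of the extension $(M,g)$ from the classical hypersurface identities applied to $N = H/K \subset M = G/K$, with all the second-order data supplied by Proposition \ref{prop:1extensionST}: the shape operator is $T = -\alpha S$ and its normal derivative is $\nabla_\xi T = \alpha^2 [S,A]$. Concretely, I would fix $x\in M$ and an orthonormal basis $\{E_i\}$ of $\frakp\simeq T_xN$, so that $\{E_i\}\cup\{\xi\}$ is orthonormal in $T_xM$, and compute $\Ric^M(\xi,\xi)$, $\Ric^M(X,\xi)$ and $\Ric^M(X,X)$ by expanding each against this basis.

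For $\Ric^M(\xi,\xi)=\sum_i\langle R^M(E_i,\xi)\xi,E_i\rangle$ I would invoke the radial curvature equation, which for the unit geodesic normal field $\xi$ identifies the endomorphism $X\mapsto R^M(X,\xi)\xi$ of $\frakp$ with $-\nabla_\xi T - T^2$. Taking traces, $\Ric^M(\xi,\xi)=-\tr(\nabla_\xi T)-\tr(T^2)=-\alpha^2\tr[S,A]-\alpha^2\tr(S^2)$; the commutator term vanishes because the trace of a commutator is zero, leaving $-\alpha^2\tr(S^2)$. For $\Ric^M(X,\xi)$ with $X\in\frakp$, the summand $\langle R^M(X,\xi)\xi,\xi\rangle$ vanishes by antisymmetry, so only $\sum_i\langle R^M(X,E_i)E_i,\xi\rangle$ survives; here the Codazzi equation expresses this normal component through $\nabla T$, and summing against $\{E_i\}$ while using symmetry of $T$ collapses it to $\pm(\mathrm{div}\, T)(X)$ plus a multiple of $X(\tr T)$. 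Since $S$, and hence $T$, is $H$-invariant and $\tr S=\tr(\mathrm{pr}\circ D\circ\mathrm{pr})$ is constant on $M$, the mean-curvature term drops out, and $T=-\alpha S$ then gives $\Ric^M(X,\xi)=-\alpha\,\mathrm{div}(S)(X)$ in the sign conventions used here.

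The tangential component is where $\Ric^N$ enters. Writing $\Ric^M(X,X)=\sum_i\langle R^M(X,E_i)E_i,X\rangle+\langle R^M(X,\xi)\xi,X\rangle$, the Gauss equation rewrites the first sum as $\Ric^N(X,X)-(\tr T)\,h(T(X),X)+h(T^2(X),X)$, while the radial curvature equation rewrites the last term as $-h((\nabla_\xi T)(X),X)-h(T^2(X),X)$. The two $h(T^2(X),X)$ contributions cancel, and substituting $T=-\alpha S$ and $\nabla_\xi T=\alpha^2[S,A]$ produces exactly $\Ric^N(X,X)-(\alpha^2\tr S)\,h(S(X),X)-\alpha^2 h([S,A](X),X)$.

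I do not anticipate a genuine obstacle here: once Proposition \ref{prop:1extensionST} is in hand this is bookkeeping with the Gauss--Codazzi--Riccati machinery. The two points that need a moment's attention are, first, that $\xi$ really is a geodesic, hypersurface-orthogonal unit field -- so that the ordinary radial curvature equation applies with no correction terms -- which follows from the semidirect-product structure $\frakg=\frakh\oplus\Real\xi$ (making the $H$-orbit distribution integrable, with $\nabla^G_\xi\xi=0$); and, second, that the identity $T=-\alpha S$ genuinely exhibits $S$ as a globally defined $H$-invariant tensor field on $M$, so that $\mathrm{div}(S)$ and $\nabla_\xi T$ are meaningful, as noted in the remark following Proposition \ref{prop:1extensionST}. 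Beyond that, the only care needed is in fixing the sign conventions in the Gauss and Codazzi equations so that the divergence term acquires the stated sign.
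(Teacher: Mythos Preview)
Your proposal is correct and follows essentially the same route as the paper: both arguments apply the radial, Codazzi, and Gauss equations to the hypersurface $N\subset M$, trace against an orthonormal frame, use constancy of $\tr T$ to kill the mean-curvature derivative terms, and then substitute $T=-\alpha S$ and $\nabla_\xi T=\alpha^2[S,A]$ from Proposition~\ref{prop:1extensionST}. The only cosmetic difference is that for $\Ric(\xi,\xi)$ you dispose of $\tr(\nabla_\xi T)$ via $\tr[S,A]=0$, whereas the paper phrases the same vanishing as $D_\xi(\tr T)=0$ by $H$-invariance.
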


\begin{rem}
When $D$ is normal, we have $[S, A] = 0$. Thus the Ricci curvatures of $M$ do not depend on the skew-symmetric part $A$ and have a very simple form in terms of $S$ and Ricci curvatures of $N$. 
\end{rem}

\begin{proof}
The radial, Gauss, and Codazzi equations tell us the curvatures on $(M,g)$ have the following forms
\begin{eqnarray*}
R(X,\xi)\xi & = & -\left(\nabla_{\xi}T\right)\left(X\right)- T^{2}\left(X\right)\\
R(X,Y,Z,W) & = & R^{N}(X,Y,Z,W)-g(T(Y),Z)g\left(T(X),W\right)+g(T(X),Z)g\left(T(Y),W\right)\\
R(X,Y,Z,\xi) & = & -g\left(\left(\nabla_{X}T\right)\left(Y\right)-\left(\nabla_{Y}T\right)\left(X\right),Z\right).
\end{eqnarray*}
Let $\set{X_i}_{i=1}^{n-1}$ be an orthonormal basis of $\mathfrak{p}$, then the Ricci tensor satisfies
\begin{eqnarray*}
\mathrm{Ric}\left(\xi,\xi\right) & = & \sum R(X_{i},\xi,\xi,X_{i})\\
& = & -D_{\xi}(\mathrm{tr}T)-\mathrm{tr}\left(T^{2}\right) \\
\mathrm{Ric}\left(X,\xi\right) & = & \sum R(X_{i}, X, \xi, X_{i}) =  -g\left(\left(\nabla_{X}T\right)\left(X_{i}\right)-\left(\nabla_{X_{i}}T\right)\left(X\right),X_{i}\right)\\
& = & -D_{X}\left(\mathrm{tr}T\right)+\mathrm{div}T(X)
\end{eqnarray*}
and
\begin{eqnarray*}
\mathrm{Ric}\left(X,X\right) & = & \sum R(X_{i},X,X,X_{i})\\
& = & R(\xi,X,X,\xi)+\sum_{i<n}R(X_{i},X,X,X_{i})\\
& = & -g\left(\left(\nabla_{\xi}T\right)(X),X\right)-g\left(T^{2}(X),X\right) \\
& & +\mathrm{Ric}^{N}(X,X)-g(T(X_{i}),X_{i})g\left(T(X),X\right)+g(T(X),X_{i})g\left(T(X_{i}),X\right)\\
& = & \mathrm{Ric}^{N}(X,X)-\left(\mathrm{tr}T\right)g\left(T(X),X\right) - g\left(\left(\nabla_{\xi}T\right)(X),X\right).
\end{eqnarray*}
Note that $T$ is invariant under the isometries and so $\mathrm{tr}(T)$ is constant. Substituting  in $T=\alpha S$ and $\nabla_\xi T = -\alpha^2 [S,A]$ from Proposition \ref{prop:1extensionST} gives us
\begin{eqnarray}
\mathrm{Ric}\left(\xi,\xi\right) & = &- \alpha^2 \tr(S^2)   \notag\\
\mathrm{Ric}\left(X,\xi\right) & = & -\alpha \mathrm{div}(S)  \notag \\
\mathrm{Ric}\left(X,X\right) & = & \mathrm{Ric}^{N}(X,X)-\left(\alpha^2 \mathrm{tr}S\right)h\left(S(X),X\right) - \alpha^2 h([S,A](X), X) \notag
\end{eqnarray}
which finishes the proof.
\end{proof}

\subsection{$(\lambda, n+m)$-Einstein manifold with symmetries}
Recall that the space $W = W_{\lambda, n+m}(M, g)$ contains the solutions to the following $(\lambda, n+m)$-Einstein equation
\begin{equation}
\Hess w = \frac{w}{m}\left(\Ric - \lambda g\right).
\end{equation}
The space $W$ is clearly a vector space. Moreover, there is an associated quadratic form
\begin{equation}\label{eqn:muw}
\mu(w) = w \Delta w + (m-1)\abs{\nabla w}^2 + \lambda w^2.
\end{equation}
In the case when $(M, g)$ is a $(\lambda, n+m)$-Einstein manifold with the warping function $w$, $\mu(w)$ is the Ricci curvature of $(F, g_F)$ and the warped product metric $g + w^2 g_F$ is Einstein with Einstein constant $\lambda$. In a series of papers \cite{HPWLcf,HPWconstantscal,HPWuniqueness} the geometry of $(\lambda, n+m)$-Einstein manifolds has been intensively studied for its connection to comparison geometry of $m$-Bakry Emery tensors, gradient Ricci solitons, etc. In particular, in \cite{HPWconstantscal} we showed that there are non-trivial $(\lambda, 4+m)$-Einstein metrics on certain 4-dimensional solvable Lie groups with left invariant metrics. This gave the first examples of non-trivial homogeneous $(\lambda, n+m)$-Einstein manifolds. 

By studying the space $W$ with the quadratic form $\mu$ we obtained in \cite{HPWuniqueness} a few uniqueness theorems. A main result from \cite{HPWuniqueness} is the following structure theorem when $\dim W \geq 2$.

\begin{thm}[Theorem 2.6 in \cite{HPWuniqueness}]\label{thm:wpedecomposition}
Let ($M^n, g$) be a complete simply-connected Riemannian manifold with $\dim W_{\lambda, n+m}(M,g) = k+1$, then
\[
M = B^b \times_u F^k
\]
where
\begin{enumerate}
\item $B$ is a manifold, possibly with boundary, and $u$ is a nonnegative function in $W_{\lambda, b+(k+m)}(B, g_B)$ with $u^{-1}(0) = \partial B$,
\item the function $u$ spans $W_{\lambda, b+(k+m)}(B, g_B)$, and
\item the fiber $F^k$ is a space form with $\dim W_{\mu_B(u), k+m}(F,g_F) = k+1$, where $\mu_B$ denotes the quadratic form on $W_{\lambda, b+(k+m)}(B, g_B)$.
\end{enumerate}
Moreover,
\begin{enumerate} \setcounter{enumi}{3}
\item $W_{\lambda, n+m}(M,g) = \{ uv : v \in W_{\mu_B(u), k+m}(F, g_F) \}$.
\end{enumerate}
\end{thm}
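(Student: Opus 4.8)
The plan rests on two backbone facts about the space $W=W_{\lambda,n+m}(M,g)$. First, it is finite-dimensional, $\dim W\le n+1$: along any geodesic a function $w\in W$ satisfies a linear second-order ODE, hence is determined by $(w(p),\nabla w(p))$; equivalently $W$ is the space of parallel sections of a natural connection on $\Real\oplus TM$. Second, and more importantly, every $w\in W$ satisfies $\Hess w=w\,P$ with the \emph{same} symmetric $(0,2)$-tensor $P=\tfrac1m(\Ric-\lambda g)$, which does not depend on $w$. First I would record the consequences. Differentiating $\mu$ from \eqref{eqn:muw} and using the equation together with the contracted second Bianchi identity — which for $w\in W$ gives $(1-m)\,\Ric(\nabla w)=\big(\scal+(1-n)\lambda\big)\nabla w+\tfrac w2\,\nabla\scal$ — one obtains $\nabla\mu(w)=0$, so $\mu(w)$ is constant on $M$ for every $w\in W$, and by polarization the associated symmetric bilinear form $\mu(\cdot,\cdot)$ is constant on $M$ as well. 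Also, from $\Hess w=wP$ one gets the compatibility identity $w_j\Hess w_i=w_i\Hess w_j$, and hence the vector fields $Z_{ij}=w_i\nabla w_j-w_j\nabla w_i$ have skew-symmetric covariant derivative, i.e. are Killing fields on $M$.

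Next I would extract the fiber. Assume $\dim W=k+1\ge 2$ and fix a basis $w_0,\dots,w_k$. Since the Killing fields of $M$ form a finite-dimensional Lie algebra (Myers--Steenrod), the $Z_{ij}$ generate a subalgebra $\mathfrak g$ integrating to an isometric action on $M$; let $\mathcal F=\mathrm{span}\{Z_{ij}\}$. Using the constancy of $\mu(\cdot,\cdot)$ and the relation $[\nabla w_i,\nabla w_j]=-\tfrac1m(\Ric-\lambda g)(Z_{ij})$, I would show that $\mathcal F$ is integrable of rank $k$ off a lower-dimensional set (it coincides there with the orbit distribution of this action), that its leaves $F$ are totally umbilic with mean-curvature one-form $-d\log u$ for a function $u$ (this is what \emph{defines} $u$; the $\mu$-relations force the one-form to be exact), and that $\mathcal F^{\perp}$ is integrable with totally geodesic leaves. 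A de Rham/Hiepko-type splitting theorem then gives a local isometry $M\cong B\times_u F$, and completeness and simple connectivity promote it to a global warped product. Finally, the restrictions of the $Z_{ij}$ endow $F$ with enough isotropy at each point to force constant curvature; equivalently, the functions $w_i/u$ restrict to $F$ as elements of a space $W_{c,k+m}(F,g_F)$ for a suitable constant $c$, so $\dim W_{c,k+m}(F,g_F)\ge k+1=\dim F+1$, which is maximal and hence forces $F$ to be a space form.

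It remains to identify the data. Using the warped-product formulas for the Ricci tensor and the Laplacian together with the $(\lambda,n+m)$-Einstein equation on $M$, one checks that $u$, viewed as a function on $B$, lies in $W_{\lambda,b+(k+m)}(B,g_B)$ and that $\mu_B(u)=c$; moreover $u$ must span $W_{\lambda,b+(k+m)}(B,g_B)$, since an independent solution on $B$ would combine with the $w_i/u$ to make $\dim W_{\lambda,n+m}(M,g)>k+1$. The inclusion $\{uv:v\in W_{\mu_B(u),k+m}(F,g_F)\}\subseteq W_{\lambda,n+m}(M,g)$ is a direct computation in the warped metric, and conversely, for $w\in W$ the equation restricted to the totally geodesic $B$-leaves shows that $w/u$ is constant along them, hence descends to an element of $W_{\mu_B(u),k+m}(F,g_F)$; a dimension count then gives equality, which is statement (4). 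Lastly, $u^{-1}(0)$ is exactly the locus where $F$ collapses; smoothness of $M$ forces this collapse to be a smooth pole, so there $F=\sph^k$ and $u$ can be taken nonnegative with $u^{-1}(0)=\partial B$.

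The step I expect to be the main obstacle is the decomposition in the second paragraph: showing $\mathcal F$ has constant rank $k$ and carefully analyzing the singular locus $\{u=0\}$ that becomes $\partial B$; verifying that the leaves of $\mathcal F$ are totally umbilic with an \emph{exact} mean-curvature one-form, so one genuinely gets a warped product rather than a merely fibered or twisted product; and globalizing the local splitting using completeness and simple connectivity. The degenerate case $\mu\equiv 0$, which corresponds to flat fibers, needs a separate but parallel treatment.
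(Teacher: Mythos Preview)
The paper does not prove this theorem: it is quoted verbatim as ``Theorem 2.6 in \cite{HPWuniqueness}'' and used as a tool, with no argument given here. So there is no proof in the present paper to compare your proposal against.

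That said, your outline is broadly the right shape for how such a result is established, and the key observation that $Z_{ij}=w_i\nabla w_j-w_j\nabla w_i$ are Killing fields (since $\nabla Z_{ij}=\nabla w_i\otimes\nabla w_j-\nabla w_j\otimes\nabla w_i$ is skew, the $w_iw_jP$ terms cancelling) is exactly the engine of the argument. One point worth sharpening: you describe $\mathcal F=\mathrm{span}\{Z_{ij}\}$ as the fiber distribution, and this is correct, but it is worth checking explicitly. If $w_i=uv_i$ in a warped product $B\times_u F$, then $\nabla^M w_i=v_i\nabla^B u+u^{-1}\nabla^F v_i$, and a direct computation gives $Z_{ij}=v_i\nabla^F v_j-v_j\nabla^F v_i$, which is indeed purely vertical. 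This also explains cleanly why $\mathcal F$ has rank $k$ generically: on $F$ the $v_i$ span a $(k{+}1)$-dimensional solution space, so the $v_i\nabla v_j-v_j\nabla v_i$ span $TF$ away from a thin set.

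The genuinely delicate parts you flag at the end are real. Showing that $\mathcal F^{\perp}$ is integrable with totally geodesic leaves, and that the mean-curvature form of $\mathcal F$ is closed (so Hiepko applies), both require the constancy of the bilinear form $\mu(\cdot,\cdot)$ together with the identity $\nabla_X\nabla w_i=w_iP(X)$; you have the ingredients but not the computation. The analysis of the singular set $\{u=0\}$ and the identification $u^{-1}(0)=\partial B$ is also nontrivial and in \cite{HPWuniqueness} uses the ODE behaviour of $w$ along geodesics normal to the level sets. If you want a self-contained write-up you will need to fill these in; as a sketch of the strategy, however, your proposal is sound.
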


The above theorem motivated the following
\begin{definition}
Let $(B^b, g_B)$ be a Riemannian manifold possibly with boundary  and let  $u$ a nonnegative function on $B$ with $u^{-1}(0) = \partial B$. Then $(B, g_B, u)$ is called a  \emph{$(\lambda,k+m)$-base manifold} if $\left(W_{\lambda, b+(k+m)}(B, g_B)\right)_D = \text{span}\set{u}$, where $W_D$ denotes the solutions satisfying Dirichlet boundary conditions. It is called an \emph{irreducible base manifold} if  $W_{\lambda, b+(k+m)}(B, g_B)= \text{span}\set{u}$ with no boundary conditions imposed.
\end{definition}

We showed that the converse of the above theorem also holds.

\begin{thm}[Theorem 2.10 in \cite{HPWuniqueness}] \label{thm:ExistenceMBF}
Given an irreducible $(\lambda,k+m)$-base manifold $(B, g_B, u)$ there is a complete metric of the form
\[ M = B^b \times_u F^k \]
such that  $\dim W_{\lambda, (b+k) +m} (M,g_M) = k+1$.
\end{thm}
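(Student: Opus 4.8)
The plan is to produce $F^k$ as the space form whose solution space is as large as possible, form the warped product over $B$, and then verify completeness and the dimension count by reversing the computations behind Theorem~\ref{thm:wpedecomposition}. First I would set $\rho:=\mu_B(u)$; since $u$ satisfies $\Hess_B u=\frac{u}{k+m}(\Ric_B-\lambda g_B)$ on the connected manifold $B$, the scalar $\rho$ is constant (this is the standard first integral of the $m$-quasi-Einstein equation; see \cite{HPWLcf}). Put $\kappa:=\rho/(k+m-1)$ and let $(F^k,g_F)$ be the complete simply-connected space form of constant sectional curvature $\kappa$ --- a round sphere, a Euclidean space, or a hyperbolic space according to the sign of $\rho$. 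On $F$ one has $\Ric_F-\rho g_F=-m\kappa\,g_F$, so $W_{\rho,k+m}(F,g_F)=\set{v:\Hess_F v=-\kappa v g_F}$, and by Obata's theorem together with its Euclidean and hyperbolic analogues (Tashiro's classification of concircular scalar fields) this space has dimension exactly $k+1$. Now form $M:=B\times_u F$ with $g_M=g_B+u^2 g_F$; on $\mathrm{int}(B)\times F$ this is a genuine warped product.

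The step I expect to be the main obstacle is showing that $g_M$ extends to a smooth \emph{complete} metric across $\partial B=u^{-1}(0)$. If $\partial B=\emptyset$ then $u>0$ everywhere and $g_M$ is a warped product of complete metrics, hence complete. If $\partial B\neq\emptyset$, then evaluating $\mu_B$ at a boundary point gives $\rho=(k+m-1)\abs{\nabla_B u}^2$ there; were this zero, $u$ would vanish together with its gradient along $\partial B$, and since $\Hess_B u|_{\partial B}=0$ the linear second-order ODE satisfied by $u$ along the normal geodesics would force $u\equiv 0$, which is impossible. Hence $\rho>0$, $\kappa>0$, $F$ is a round sphere of curvature $\kappa$, and $\abs{\nabla_B u}^2\equiv\kappa$ along $\partial B$, so $\partial B$ is a smooth totally geodesic hypersurface. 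In Fermi coordinates $g_B=dt^2+h_t$ with $t=d_B(\cdot,\partial B)$ one has $u=\sqrt{\kappa}\,t+O(t^3)$, so $g_M=dt^2+t^2 g_{\sph^k(1)}+h_0+O(t^2)$ as $t\to 0$, whose leading term is the flat metric on $\Real^{k+1}$ written in polar form; the standard capping-off argument --- precisely the boundary analysis of \cite{HPWLcf,HPWuniqueness} --- then shows $g_M$ extends smoothly across the collapsed locus and that the resulting manifold is complete.

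It remains to compute $\dim W_{\lambda,n+m}(M,g_M)$ with $n=b+k$. For the inequality ``$\geq$'', substituting $\Hess_B u=\frac{u}{k+m}(\Ric_B-\lambda g_B)$, $\Hess_F v=-\kappa v g_F$ and $\rho=(k+m-1)\kappa$ into the warped-product formulas for $\Hess_{g_M}$ and $\Ric_{g_M}$ shows $uv\in W_{\lambda,n+m}(M,g_M)$ for every $v\in W_{\rho,k+m}(F,g_F)$; as $u\not\equiv 0$ the map $v\mapsto uv$ is injective, giving $\dim W_{\lambda,n+m}(M,g_M)\geq k+1$. For ``$\leq$'', I would restrict an arbitrary $\hat w\in W_{\lambda,n+m}(M,g_M)$ to the fibers $\set{p}\times F$: the $F$-tangential components of the $(\lambda,n+m)$-Einstein equation, rewritten with the warped-product expressions for $\Hess_{g_M}$ and $\Ric_{g_M}$, force $\hat w(p,\cdot)$ to be a multiple of an element of $W_{\rho,k+m}(F,g_F)$ for each interior $p$; expanding $\hat w$ in a fixed basis of that $(k+1)$-dimensional space and feeding the result into the $B$-directions of the equation shows that the coefficient functions multiplied by $u$ lie in $W_{\lambda,b+(k+m)}(B,g_B)$, which equals $\mathrm{span}\set{u}$ by irreducibility, so they are constant. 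This is exactly the computation behind items~(3)--(4) of Theorem~\ref{thm:wpedecomposition} run in reverse, and it yields $W_{\lambda,n+m}(M,g_M)=\set{uv:v\in W_{\rho,k+m}(F,g_F)}$, so $\dim W_{\lambda,n+m}(M,g_M)=k+1$.
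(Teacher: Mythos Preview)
The paper does not actually prove this theorem: it is quoted verbatim as Theorem~2.10 of \cite{HPWuniqueness} and stated without proof in Section~2.3, so there is no ``paper's own proof'' to compare against. Your proposal is a plausible reconstruction of how the argument in \cite{HPWuniqueness} should go, and it follows exactly the strategy one would expect from reading Theorem~\ref{thm:wpedecomposition} (the structure theorem) backwards: choose the fiber space form so that its solution space is $(k+1)$-dimensional, verify completeness of the warped product (including the capping-off analysis at $\partial B$ when $\rho>0$), and then check the dimension of $W$ using irreducibility of the base for the upper bound. The individual steps---constancy of $\mu_B(u)$, Tashiro/Obata for the fiber, the boundary normal-form computation, and the separation-of-variables argument for the upper bound---are the standard ingredients and are handled correctly in outline.

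One small point worth tightening: your formula $\kappa=\rho/(k+m-1)$ tacitly assumes $k+m\geq 2$; this is harmless in context (the paper always has $m\geq 1$ and $k\geq 1$ here), but the case $k=1$ deserves a separate sentence since one-dimensional space forms have no intrinsic curvature and the remark following the theorem explicitly flags the non-uniqueness $F=\mathbb{R}$ versus $F=\sph^1$ when $\partial B=\emptyset$ and $\mu_B(u)>0$. Your argument still goes through there---the fiber equation reduces to $v''=-(\rho/m)v$, which has a two-dimensional solution space on either $\mathbb{R}$ or a circle of the appropriate length---but the ``space form of curvature $\kappa$'' language should be replaced by a direct ODE statement in that case.
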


\begin{rem}  If $\partial B = \emptyset$, $\mu_B(u)>0$, and $k=1$ there are two such metrics corresponding to the choice $F= \mathbb{R}$ or $F = \sph^1$.   Otherwise, the warped product over $B$ with  $\dim W_{\lambda, (b+k) +m} (M,g_M) = k+1$ is unique.    In this case, we call  $M$ the \emph{ $k$-dimensional elementary warped product extension} of $(B,g_B, u)$.
\end{rem}

In Section 5 of \cite{HPWuniqueness} we established the following relationship between isometries on $M$ and $B$. Note that any isometry $\sigma$ of $M$ has a natural action on $W(M, g)$ sending $w$ to $w\circ \sigma^{-1}$.

\begin{thm}[Theorem 5.7 in \cite{HPWuniqueness}]\label{thm:isometryMBF}
Let $M$ be a simply connected $(\lambda, n+m)$-Einstein manifold with  $\dim W_{\lambda, n+m}(M)=k+1>1$. Then the isometry group of $M$ consists of  maps  $h:M \rightarrow M$  of the form
\[
h = h_1 \times   h_2 \quad \text{with} \quad h_1: B \rightarrow B \quad h_2: F \rightarrow F,
\]
where $h_1 \in \mathrm{Iso}(B, g_B)$ and
\begin{enumerate}
\item If $\mu(u) \neq 0$ then $h_2 \in \mathrm{Iso}(F, g_F)$.
\item If $\mu(u) = 0$ then  $h_2$ is a $C$-homothety of $\mathbb{R}^k$  where $C = C_{h_1}$ is the constant so that $u \circ h_1^{-1} = C_{h_1} u$.  Namely,
\[
h_2(v) = b + C A(v) \quad \text{with} \quad b \in \mathbb{R}^k \text{ and } A \in \mathrm{O}(\mathbb{R}^k).
\]
\end{enumerate}
\end{thm}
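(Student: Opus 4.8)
The plan is to reduce everything to the warped product structure provided by Theorem~\ref{thm:wpedecomposition}: write $M = B^b \times_u F^k$ with $F$ a space form, $u$ spanning the Dirichlet solutions in $W_{\lambda, b+(k+m)}(B, g_B)$, and $W_{\lambda, n+m}(M) = \set{uv : v\in W_F}$, where $W_F = W_{\mu_B(u), k+m}(F, g_F)$ has the maximal dimension $k+1$. The only thing I would use about $\mathrm{Iso}(M)$ is that, since $\Hess$, $\nabla$, $\Delta$ and $\Ric$ commute with pullback by an isometry, every $\sigma\in\mathrm{Iso}(M)$ acts linearly on $W_{\lambda, n+m}(M)$ by $w\mapsto w\circ\sigma^{-1}$ (preserving also the quadratic form $\mu$ of \eqref{eqn:muw}); the content of the theorem is that such a $\sigma$ must respect the product $B\times F$.

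First I would show that $\sigma$ permutes the base leaves $B\times\set{q}$. Consider the map $\Psi$ sending a point $x$ to the line in $\mathbb{P}\bigl(W_{\lambda,n+m}(M)^*\bigr)$ spanned by evaluation at $x$; it is defined off the common zero set $u^{-1}(0)\times F$ of $W_{\lambda,n+m}(M)$, an open set which $\sigma$ preserves because $\sigma^*$ preserves $W_{\lambda,n+m}(M)$. Writing $w=uv$, one checks that $\Psi$ is constant on each base leaf (there every element of $W_{\lambda,n+m}(M)$ is a fixed scalar times $u|_B$), while along a fiber $\set{p}\times F$ it equals $q\mapsto [v_0(q):\cdots:v_k(q)]$, the standard projective embedding of the space form $F$, which is injective except for the antipodal identification when $F$ is a round sphere. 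Hence the connected components of the fibers of $\Psi$ are precisely the base leaves, and since $\Psi$ intertwines $\sigma$ with a projective linear automorphism, $\sigma$ permutes these leaves. As the base leaves are totally geodesic and carry the induced metric $g_B$, $\sigma$ induces an isometry $h_1 \in \mathrm{Iso}(B, g_B)$ on the leaf space $B$; since $h_1$ preserves $\partial B = u^{-1}(0)$, it preserves the line $\bigl(W_{\lambda, b+(k+m)}(B,g_B)\bigr)_D = \mathrm{span}\set{u}$, so $u\circ h_1^{-1} = C_{h_1}\, u$ for a constant $C_{h_1}>0$.

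Next I would upgrade this to a product decomposition of $\sigma$. Because $\sigma$ sends base leaves to base leaves, $d\sigma$ preserves the horizontal distribution $\mathcal{H} = T(B\times\set{q})$, and hence its orthogonal complement, the vertical distribution $\mathcal{V}$; as $\mathcal{V}$ is integrable with leaves the fibers $\set{p}\times F$, $\sigma$ also permutes these fibers. A diffeomorphism preserving two complementary foliations, each leaf of one meeting each leaf of the other in a single point, is necessarily a product map, so $\sigma(p,q) = (h_1(p), h_2(q))$ for some $h_2\in\mathrm{Diff}(F)$. Finally, $\sigma$ restricts to an isometry from the fiber $\set{p}\times F$, carrying its induced metric $u(p)^2 g_F$, onto the fiber $\set{h_1(p)}\times F$, carrying $u(h_1(p))^2 g_F$; using $u(p) = C_{h_1}\, u(h_1(p))$ this says $h_2^* g_F = C_{h_1}^2\, g_F$, i.e.\ $h_2$ is a $C_{h_1}$-homothety of $(F, g_F)$. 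If $\mu_B(u)\neq 0$ then $F$ has nonzero constant curvature and so is not properly homothetic to itself; hence $C_{h_1}=1$ and $h_2\in\mathrm{Iso}(F,g_F)$, which is case~(1). If $\mu_B(u)=0$ then $F=\mathbb{R}^k$ and a $C_{h_1}$-homothety is exactly a map $h_2(v) = b + C_{h_1} A(v)$ with $A\in\mathrm{O}(k)$, which is case~(2). The converse inclusion, that any $h_1\times h_2$ of this form preserves $g_M = g_B + u^2 g_F$, is a direct check from $u\circ h_1^{-1} = C_{h_1}u$ and $h_2^*g_F = C_{h_1}^2 g_F$.

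I expect the main obstacle to be the assertion, used in the second paragraph, that the fibers of $\Psi$ reduce to the base leaves, i.e.\ that the maximal solution space $W_F$ on a $k$-dimensional space form realizes the essentially injective projective embedding of $F$. This requires identifying $W_{\mu_B(u), k+m}(F, g_F)$ explicitly in each of the three cases ($\mathbb{R}^k$ with the affine functions, $S^k$ and $H^k$ with the restrictions of linear functionals from the ambient $(k+1)$-space) and checking that the only failure of injectivity is the antipodal one on the sphere; the completeness and simple connectedness of $M$ enter here, as well as in the appeal to Theorem~\ref{thm:wpedecomposition} for the global product form. The degenerate case $b = 0$, where $M$ is itself a space form and $h_1$ is the identity, is treated separately.
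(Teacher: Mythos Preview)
This theorem is not proved in the present paper: it is quoted verbatim as Theorem~5.7 of \cite{HPWuniqueness} and used as a black box in Section~2.3, so there is no ``paper's own proof'' here to compare against. Your argument is therefore a proposed proof of a cited result, not of something established in this paper.

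That said, your outline is sound and follows the natural strategy one would expect for such a statement. The key steps---that an isometry acts linearly on $W_{\lambda,n+m}(M)$ and hence intertwines the evaluation map $\Psi$ with a projective transformation, that the connected fibers of $\Psi$ are the base leaves, and that preservation of the two transverse foliations forces a product map whose fiber component is a $C_{h_1}$-homothety---are all correct. Two points deserve care. First, when $\partial B\neq\emptyset$ the warped product is not a set-theoretic product over $\partial B$, so your foliation argument applies only on the open dense set $\{u>0\}$; you should say explicitly that $h_1$ and $h_2$ extend by continuity (and that $h_1$ preserves $\partial B$ since $\sigma$ preserves the common zero set of $W$). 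Second, the identification of $W_{\mu_B(u),k+m}(F,g_F)$ with the affine or ambient-linear functions on the three model space forms, and the resulting injectivity of the projective embedding, is indeed the crux; this is elementary but must be done case by case, as you note.
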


\begin{rem}
Note that the isometry group $\mathrm{Iso}(M,g)$ contains the following subgroup 
\[
\mathrm{Iso}(B,g_B)_u \times \mathrm{Iso}(\Real^k),
\]
where $\mathrm{Iso}(B,g_B)_u = \set{\sigma \in \mathrm{Iso}(B,g_B) : u = u\circ \sigma^{-1}}$. This subgroup has codimension one unless $u$ is a constant function. 
\end{rem}

\medskip
\section{One-dimensional extension of homogeneous space}

In this section using Lemma \ref{lem:RicciGKHK} we obtain the conditions under which a homogeneous space admits an Einstein one-dimensional extension, see Theorem \ref{thm:EinsteinConstruction}, and then prove Theorem \ref{thm:Einstein1extsoliton} in the introduction, see Theorem \ref{thm:EinsteinExtDnormal}. In the second part, we show that it also admits a one-dimensional extension which is $(\lambda, n+m)$-Einstein with $\lambda < 0$, see Theorem \ref{thm:WPE1extension}.

\subsection{Einstein one-dimensional extension}
First we have
\begin{thm} \label{thm:EinsteinConstruction}
Suppose $(N,h)$ is an $H$-homogeneous space and $D$ is a derivation of $\mathfrak{h}$ such that the following conditions hold
\begin{enumerate}
\item $ \Ric^N = \lambda I + S + \dfrac{1}{\tr(S)} [S,A] $
\item $\mathrm{div}(S) = 0$
\item $\tr(S^2) = -\lambda \tr(S)$
\end{enumerate}
for some constant $\lambda <0$, then the one-dimensional extension of $(N,h)$  with $D$ and $\alpha^2 = 1/\tr(S)$ is $\lambda$-Einstein.
\end{thm}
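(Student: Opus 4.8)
The plan is to read off the Ricci tensor of the one-dimensional extension $(M,g)$ directly from Lemma \ref{lem:RicciGKHK} and to check, block by block, that the choice $\alpha^2 = 1/\tr(S)$ makes each block equal $\lambda$ times the corresponding block of $g$. Since $g(\xi,\xi) = 1$, $g(\xi,X) = 0$ and $g|_{\frakp} = h$, and since every operator entering the formulas — namely $\Ric^N$, $S$, and $[S,A]$ (the last being symmetric because $S$ is symmetric and $A$ is skew) — is symmetric, it suffices to verify the three quadratic-form identities $\Ric(\xi,\xi) = \lambda$, $\Ric(X,\xi) = 0$, and $\Ric(X,X) = \lambda h(X,X)$ for all $X \in \frakp$; polarization then gives $\Ric = \lambda g$.

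First I would observe that hypothesis (3) together with $\lambda < 0$ forces $\tr(S) > 0$ (one has $\tr(S^2) \geq 0$, with equality only in the trivial case $S = 0$), so that $\alpha^2 = 1/\tr(S)$ is a legitimate positive constant and the extension is genuine. Then the normal and mixed blocks are immediate: Lemma \ref{lem:RicciGKHK} gives $\Ric(\xi,\xi) = -\alpha^2 \tr(S^2) = -\tr(S^2)/\tr(S) = \lambda$ by (3), matching $\lambda g(\xi,\xi)$; and $\Ric(X,\xi) = -\alpha\,\mathrm{div}(S) = 0$ by (2), matching $\lambda g(X,\xi) = 0$.

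For the tangential block, substitute $\alpha^2 \tr(S) = 1$ and $\alpha^2 = 1/\tr(S)$ into the last line of Lemma \ref{lem:RicciGKHK}:
\[
\Ric(X,X) = \Ric^N(X,X) - h(S(X),X) - \tfrac{1}{\tr(S)}\,h([S,A](X),X).
\]
Now insert hypothesis (1) in the form $\Ric^N(X,X) = \lambda h(X,X) + h(S(X),X) + \tfrac{1}{\tr(S)} h([S,A](X),X)$; the two $S$-terms cancel and the two $[S,A]$-terms cancel, leaving $\Ric(X,X) = \lambda h(X,X) = \lambda g(X,X)$. Having matched all three blocks, $(M,g)$ is $\lambda$-Einstein.

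I do not expect any real obstacle here: once Lemma \ref{lem:RicciGKHK} is available the argument is a one-line substitution. The only point needing a moment's attention is the admissibility of the scaling constant — that $\tr(S) \neq 0$, so $\alpha$ is defined, and that $\alpha^2 > 0$, so it yields an honest extension — which is precisely the role played by stating hypothesis (3) with $\lambda < 0$.
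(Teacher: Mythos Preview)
Your proof is correct and follows essentially the same approach as the paper: apply Lemma \ref{lem:RicciGKHK} and verify the three blocks $\Ric(\xi,\xi)$, $\Ric(X,\xi)$, $\Ric(X,X)$ using hypotheses (3), (2), (1) respectively with the choice $\alpha^2 = 1/\tr(S)$. Your version is slightly more careful in noting why $\tr(S)>0$ and why $[S,A]$ is symmetric, but the argument is the same.
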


\begin{proof}
From Lemma \ref{lem:RicciGKHK} we see that $\mathrm{div}(S) = 0$ is equivalent to $\mathrm{Ric}\left(X,\xi\right) = 0$. The condition $\tr(S^2) = -\lambda \tr(S)$ then tells us that
\[ \mathrm{Ric}\left(\xi,\xi\right)  = \lambda \alpha^2 \tr(S). \]
So if we choose $\alpha$ so that $\alpha^2 = \dfrac{1}{\tr(S)}$, then $\Ric(\xi, \xi)=\lambda$.  Then we have
\begin{eqnarray*}
\mathrm{Ric}\left(X,X\right) & = & \mathrm{Ric}^{N}(X,X)-h\left(S(X),X\right) - \frac{1}{\tr(S)}  h([S,A](X), X) \\
&=& \lambda g(X,X)
\end{eqnarray*}
from condition (1). This shows that the one-dimensional extension $(M,g)$ of $(N, h)$ is Einstein with Einstein constant $\lambda$.
\end{proof}

In the case when $(N, h)$ is a semi-algebraic Ricci soliton the theorem above gives us Theorem \ref{thm:Einstein1extsoliton} in the introduction.

\begin{thm}\label{thm:EinsteinExtDnormal}
A non-flat, non-trivial  semi-algebraic Ricci soliton  on a homogeneous space
admits an  Einstein  one-dimensional extension with
\[
\mathrm{ad}_{\xi}(X) =  \alpha D(X), \qquad \text{for all }X \in \mathfrak{h}
\]
if and only if $D$ is normal.
\end{thm}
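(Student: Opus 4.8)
The plan is to reduce both implications to Lemma~\ref{lem:RicciGKHK} together with Theorem~\ref{thm:EinsteinConstruction}, after first recording two identities valid for \emph{any} homogeneous Ricci soliton. Throughout, let $D\in\Der(\frakh)$ with $D|_{\frakk}=0$ be the derivation furnished by Proposition~\ref{prop:Jabl}, so that on $\frakp\simeq T_xN$ one has $\Ric^{N}=\lambda I+S$, where $S$ and $A$ are the symmetric and skew-symmetric parts of $\mathrm{pr}\circ D$ on $\frakp$ as in Section~2; recall that $D$ is normal exactly when $[S,A]=0$. Since the soliton is non-trivial, $S\neq 0$ (otherwise $\Ric^{N}=\lambda I$, so the soliton vector field is Killing and the soliton is trivial), and since it is non-flat it is expanding, so $\lambda<0$. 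The two identities are: (i) $\mathrm{div}(S)=0$, because $N$ is homogeneous, hence $\scal^{N}$ is constant and the contracted second Bianchi identity gives $\mathrm{div}(\Ric^{N})=\tfrac12\,d\,\scal^{N}=0$, whence $\mathrm{div}(S)=\mathrm{div}(\Ric^{N}-\lambda h)=0$; and (ii) $\tr(S^{2})=-\lambda\tr(S)$, which I would derive from the standard fact that along the self-similar solution $g_{t}=c(t)\phi_{t}^{*}g$ the scalar curvature just rescales, $\scal_{g_{t}}=\scal^{N}/c(t)$, so that equating $\partial_{t}\scal_{g_{t}}|_{t=0}$ computed this way with the Ricci-flow evolution $\partial_{t}\scal=\Delta\scal+2|\Ric|^{2}$, together with $c'(0)=-2\lambda$, gives $\tr(\Ric^{2})=\lambda\,\scal^{N}$; inserting $\Ric^{N}=\lambda I+S$ and $\scal^{N}=\lambda(n-1)+\tr(S)$ collapses this to (ii). In particular $\tr(S)=\tr(S^{2})/(-\lambda)>0$.

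For the ``if'' direction, assume $D$ is normal, so $[S,A]=0$. Then all three hypotheses of Theorem~\ref{thm:EinsteinConstruction} hold with the soliton's own constant $\lambda<0$: condition~(1) reduces to $\Ric^{N}=\lambda I+S$ by Proposition~\ref{prop:Jabl}, condition~(2) is identity~(i), and condition~(3) is identity~(ii). Since $\tr(S)>0$ we may take $\alpha^{2}=1/\tr(S)$, and Theorem~\ref{thm:EinsteinConstruction} then produces a $\lambda$-Einstein one-dimensional extension of $(N,h)$ with $\ad_{\xi}(X)=\alpha D(X)$, as required.

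For the ``only if'' direction, suppose some one-dimensional extension $(M,g)$ of $(N,h)$ by $D$ and a constant $\alpha$ is Einstein, $\Ric^{M}=\Lambda g$; here $\alpha\neq 0$, since $\alpha=0$ would make $M=N\times\Real$ and force $\Ric^{N}=0$, i.e.\ $N$ flat. Substituting $\Ric^{N}=\lambda I+S$ into the formula for $\Ric(X,X)$ in Lemma~\ref{lem:RicciGKHK} and comparing with $\Ric^{M}(X,X)=\Lambda\,h(X,X)$ gives, as symmetric operators on $\frakp$ (note that $[S,A]$ is symmetric because $S=S^{*}$ and $A=-A^{*}$),
\[
\alpha^{2}[S,A]=(\lambda-\Lambda)I+\bigl(1-\alpha^{2}\tr(S)\bigr)S,
\]
so $[S,A]=c_{1}I+c_{2}S$ for constants $c_{1},c_{2}$. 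Taking the trace and using $\tr[S,A]=0$ gives $c_{1}(n-1)+c_{2}\tr(S)=0$, where $n-1=\dim N$; multiplying by $S$, taking the trace, and using $\tr\bigl(S[S,A]\bigr)=\tr(S^{2}A)-\tr(SAS)=0$ gives $c_{1}\tr(S)+c_{2}\tr(S^{2})=0$. The determinant of this linear system in $(c_{1},c_{2})$ is $(n-1)\tr(S^{2})-\tr(S)^{2}$, which is nonnegative by Cauchy--Schwarz and strictly positive unless $S$ is a scalar multiple of $I$. If $S$ is not a multiple of $I$ then $c_{1}=c_{2}=0$, hence $[S,A]=0$; if $S$ is a multiple of $I$ then $[S,A]=0$ trivially. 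In either case $D$ is normal.

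The routine parts are the Bianchi computation and the scalar-curvature identity in the first paragraph, and the symmetric-operator manipulation with Lemma~\ref{lem:RicciGKHK} in the third. The one genuinely delicate point is the observation in the ``only if'' direction that the two scalar constraints $\tr[S,A]=0$ and $\tr\bigl(S[S,A]\bigr)=0$ already force the a priori two-parameter expression $[S,A]=c_{1}I+c_{2}S$ to vanish --- this is exactly what makes the converse match the ``normal'' hypothesis. A secondary point to treat carefully is that the divergence and traces appearing in Lemma~\ref{lem:RicciGKHK} can be computed on $N$ itself (equivalently on $M$, since $S$ is $H$-invariant and annihilates $\xi$), so that identity~(i) is precisely what the construction needs.
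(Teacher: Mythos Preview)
Your proof is correct. The ``if'' direction is essentially identical to the paper's: both establish $\mathrm{div}(S)=0$ and $\tr(S^{2})=-\lambda\tr(S)$ from general Ricci-soliton identities on a homogeneous space and then invoke Theorem~\ref{thm:EinsteinConstruction}.

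The ``only if'' direction, however, follows a genuinely different route. The paper first uses the $\Ric^{M}(\xi,\xi)$ equation from Lemma~\ref{lem:RicciGKHK} together with $\tr(S^{2})=-\lambda\tr(S)$ to pin down $\alpha^{2}\tr(S)=1$ when the Einstein constant equals $\lambda$, and then the $\Ric^{M}(X,X)$ equation forces $[S,A]=0$; the case where the Einstein constant $c\neq\lambda$ is handled separately by tracing to obtain $\tr(S)+\lambda(n-1)=0$, i.e.\ $\scal^{N}=0$, which combined with $|\Ric|^{2}=\lambda\scal$ gives flatness. You instead work entirely with the $\Ric^{M}(X,X)$ equation for an arbitrary Einstein constant $\Lambda$, obtaining $[S,A]=c_{1}I+c_{2}S$, and then kill both coefficients simultaneously via the two trace identities $\tr[S,A]=0$ and $\tr(S[S,A])=0$, whose determinant $(n-1)\tr(S^{2})-\tr(S)^{2}$ is controlled by Cauchy--Schwarz. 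This is more uniform (no case split on the Einstein constant, no use of the $\xi$-equation) and does not require re-invoking the soliton identity $\tr(S^{2})=-\lambda\tr(S)$ in the converse direction. What the paper's approach buys, on the other hand, is an explicit identification of the Einstein constant as the soliton constant $\lambda$ and of $\alpha$ as $1/\sqrt{\tr(S)}$, information that your argument recovers only a posteriori once $[S,A]=0$ is known.
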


\begin{proof}
Let $(N^{n-1},h)$ be a semi-algebraic Ricci soliton
\[
\mathrm{Ric} = \lambda I +S
\]
for some constant $\lambda$. By exponentiating, we can identify $S$ with the Lie derivative $\frac{1}{2} \mathscr{L}_Y h $ for some vector field $Y \in \mathfrak{X}(N)$ and get a Ricci soliton structure on $N$.  This gives us
\[
\mathrm{div}( \mathscr{L}_Y h) =  2 \mathrm{div}(\mathrm{Ric}) = d \mathrm{scal} = 0
\]
and so $\mathrm{div}(S) = 0$. Recall the formula for the Laplacian of the scalar curvature of a Ricci soliton
\[
\Delta(\mathrm{scal}) - D_Y \mathrm{scal} = \lambda \mathrm{scal} - |\mathrm{Ric}|^2.
\]
Since  the scalar curvature is constant, we have
\[
\abs{\mathrm{Ric}}^2 = \lambda \mathrm{scal}
\]
which is equivalent to $\mathrm{tr}(S^2)  =  -\lambda \mathrm{tr}(S)$.

If $D$ is normal, then $[S,A]=0$ and then from the previous Theorem \ref{thm:EinsteinConstruction} there is a one-dimensional extension of $(N, h)$ which is Einstein.

On the other hand, if we have a non-trivial semi-algebraic Ricci soliton and a one-dimensional extension which is $\lambda$-Einstein, then the equation $\mathrm{tr}(S^2)  =  -\lambda \mathrm{tr}(S)$ and Ricci curvature $\Ric^M(\xi, \xi) = \alpha^2 \tr(S^2) = \lambda$ imply that $\alpha^2 \tr(S) = 1$.  Plugging this into the equation for $\mathrm{Ric}^M(X,X)$ in Lemma \ref{lem:RicciGKHK} then implies that $[S,A]=0$, i.e., $D$ is normal.

To finish the proof we show that if the extension has  Einstein constant  $c \neq \lambda$ then $N$ is flat.   Now we must choose $\alpha$ so that  $\alpha^2 = \dfrac{c}{\lambda \tr(S)}$, then the third equation in Lemma \ref{lem:RicciGKHK} tells us that on $N$ we have
\begin{eqnarray*}
\alpha^2 [S,A] + \left(\frac{c}{\lambda}  - 1\right) S + (c - \lambda) I = 0.
\end{eqnarray*}
Tracing this equation yields
\begin{eqnarray*}
\left(\frac{c}{\lambda}  - 1\right) \tr(S) + (c - \lambda) (n-1) = 0.
\end{eqnarray*}
Since $c \neq \lambda$ this is equivalent to
\begin{eqnarray*}
\tr(S) + \lambda(n-1) = 0.
\end{eqnarray*}
But this implies we have $\mathrm{scal}=0$ which implies $|\mathrm{Ric}|^2 =0$ on $N$. So $N$ is a flat space as a homogeneous Ricci flat space is flat, see for example, \cite[Theorem 7.61]{Besse}.
\end{proof}

\smallskip
\subsection{$(\lambda, n+m)$-Einstein one-dimensional extension}

We consider the same set-up as in the previous subsection. Namely $(N^{n-1}, h)$ is an $H$-homogeneous space and  $(M^n,g)$ is a one-dimensional extension of $N$ by a derivation $D\in \Der(\frakh)$ and a constant $\alpha$.  We let $r$ be the signed distance function on $M$ to the hypersurface $N$.  For reasons made clear below, we let
\[ w(r) = e^{L r}.  \]
for some constant $L$ which is also determined later.

\begin{thm}\label{thm:WPE1extension}
Let $m>0$ be an integer.  Suppose $(N^{n-1},h)$ is an $H$-homogeneous space with a derivation $D \in \Der(\mathfrak{h})$ which satisfies the following conditions
\begin{enumerate}
\item $\Ric^N = \lambda I + S + \dfrac{1}{\tr(S)-\lambda m} [S,A] $
\item $\mathrm{div}(S) = 0$
\item $\tr(S^2) = -\lambda \tr(S)$
\end{enumerate}
for some constant $\lambda <0$. Then the one-dimensional extension of $(N, h)$ with $D$ and $\alpha^2 = 1/(\tr(S) - \lambda m)$ is a $(\lambda, n+m)$-Einstein manifold with warping function 
\[
w(r) = e^{Lr} \quad \text{where} \quad L = \lambda \alpha.
\]
\end{thm}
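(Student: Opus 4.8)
The plan is to verify directly that the one-dimensional extension $(M,g)$ satisfies the $(\lambda,n+m)$-Einstein equation $\Hess w = \frac{w}{m}(\Ric - \lambda g)$ with $w = e^{Lr}$, by computing both sides in the frame adapted to the splitting $TM = TN \oplus \Real\xi$. First I would record that $r$ is the signed distance to $N$, so $\nabla r = \xi$ (the unit normal field on each level set, which by construction is $H$-invariant), $\nabla w = L e^{Lr}\xi = Lw\,\xi$, and the level sets of $r$ are exactly the $H$-orbits $H/K$ with shape operator $T = -\alpha S$ by Proposition \ref{prop:1extensionST}. The Hessian of a radial function on the model $M = N\times_{?}\Real$ is standard: $\Hess w(\xi,\xi) = w''(r) = L^2 w$, $\Hess w(X,\xi)=0$ for $X$ tangent to $N$ (since $w$ is $H$-invariant along the orbits and $\xi$ is normal), and $\Hess w(X,Y) = -w'(r)\,g(T(X),Y) = -Lw\cdot(-\alpha)h(S(X),Y) = L\alpha w\, h(S(X),Y)$.

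Next I would expand the right-hand side $\frac{w}{m}(\Ric - \lambda g)$ componentwise using Lemma \ref{lem:RicciGKHK}. On the $(\xi,\xi)$-slot: $\frac{w}{m}(-\alpha^2\tr(S^2) - \lambda) = \frac{w}{m}(\alpha^2\lambda\tr(S) - \lambda)$ after using condition (3), and this equals $\frac{\lambda w}{m}(\alpha^2\tr(S) - 1) = \frac{\lambda w}{m}\cdot(-\alpha^2\lambda m) = -\lambda^2\alpha^2 w = L^2 w$ with $L=\lambda\alpha$ and $\alpha^2 = 1/(\tr(S)-\lambda m)$; this matches $\Hess w(\xi,\xi)$. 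On the $(X,\xi)$-slot: $\frac{w}{m}(-\alpha\,\mathrm{div}(S)) = 0$ by condition (2), matching $\Hess w(X,\xi)=0$. On the $(X,X)$-slot: using condition (1), $\Ric^N(X,X) - \lambda h(X,X) = h(S(X),X) + \frac{1}{\tr(S)-\lambda m}h([S,A](X),X)$, so $\Ric^M(X,X) - \lambda g(X,X) = h(S(X),X) + \alpha^2 h([S,A](X),X) - \alpha^2\tr(S)h(S(X),X) - \alpha^2 h([S,A](X),X)$; the $[S,A]$ terms cancel exactly (this is why condition (1) has the shifted denominator $\tr(S)-\lambda m$ rather than $\tr(S)$), leaving $(1 - \alpha^2\tr(S))h(S(X),X) = \alpha^2\lambda m\, h(S(X),X)$. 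Multiplying by $\frac{w}{m}$ gives $\lambda\alpha^2 w\, h(S(X),X) = L\alpha w\, h(S(X),X)$, matching $\Hess w(X,Y)$.

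Having verified the three slots, the final step is to interpret the conclusion: since $m>0$ is an integer and $w = e^{Lr} > 0$ is a positive smooth function on $M$ (the extension is complete with $N$ of codimension one, no boundary), $w \in W_{\lambda,n+m}(M,g)$, so $(M,g)$ is a $(\lambda,n+m)$-Einstein manifold with warping function $w$, and the warped product $M\times_w F^m$ over the appropriate $m$-dimensional space form is genuinely Einstein with constant $\lambda$. The one-dimensional extension is $G$-homogeneous by construction, so this is a homogeneous $(\lambda,n+m)$-Einstein one-dimensional extension.

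I do not expect a serious obstacle here: the argument is essentially the bookkeeping of matching three components, and the choices of $\alpha$ and $L$ have been reverse-engineered precisely so that the match works. The one point that deserves a little care — and is the closest thing to a subtlety — is confirming the radial Hessian formula $\Hess w(X,Y) = -w'(r)\,\II(X,Y)$ in the mixed $(X,\xi)$-slot and off-diagonal $(X,Y)$-slots: this follows because $w$ is a function of $r$ alone, $\Hess w = w''\,dr\otimes dr + w'\Hess r$, and $\Hess r$ restricted to the level set is exactly the second fundamental form $\II(X,Y) = g(T(X),Y)$ with $\Hess r(X,\xi) = 0$ since $\xi = \nabla r$ is geodesic ($\nabla_\xi\xi = 0$). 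I would also note, as a consistency check, that the ansatz $w = e^{Lr}$ with $L = \lambda\alpha$ forces the two expressions for $\mu(w)$ and the level-set geometry to be compatible with Theorem \ref{thm:wpedecomposition}, but this is not needed for the proof itself.
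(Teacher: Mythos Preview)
Your approach is exactly the paper's: compute $\Hess w$ for $w=e^{Lr}$ and verify $\Hess w=\frac{w}{m}(\Ric-\lambda g)$ slot by slot using Lemma~\ref{lem:RicciGKHK} and Proposition~\ref{prop:1extensionST}. The structure is correct, but several signs in the arithmetic are off. With the paper's convention $T(X)=\nabla_X\xi$ and $T=-\alpha S$ one has $\Hess w(X,Y)=w'(r)\,g(T(X),Y)=-L\alpha w\,h(S(X),Y)$, not $-w'(r)\,g(T(X),Y)$; indeed your own closing remark $\Hess w=w''\,dr\otimes dr+w'\Hess r$ with $\Hess r=\II$ gives the positive sign in front of $w'$. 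Correspondingly, from $\alpha^2(\tr S-\lambda m)=1$ one gets $1-\alpha^2\tr(S)=-\alpha^2\lambda m$ (you wrote $+\alpha^2\lambda m$) and $\alpha^2\tr(S)-1=\alpha^2\lambda m$ (you wrote $-\alpha^2\lambda m$); in the $(\xi,\xi)$ slot this makes your intermediate value $-\lambda^2\alpha^2 w$, which is not $L^2w$. In the $(X,X)$ slot your two sign errors happen to cancel, but in the $(\xi,\xi)$ slot you simply asserted the correct endpoint. Once the signs are corrected the three components match and the argument is identical to the paper's.
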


\begin{proof}
Setting $w=e^{Lr}$ we have
\begin{eqnarray*}
\mathrm{Hess} w = L^2 w dr\otimes dr + L w g(T(\cdot),\cdot)
\end{eqnarray*}
where $dr\otimes dr(X, Y) = X(r) Y(r)$ and $T(X) = \nabla_X \nabla r$ for any $X, Y \in T_xM$. Note that $\xi = \nabla r$. Applying the assumption on $N$ along with  Lemma \ref{lem:RicciGKHK} we obtain the following equations
\begin{eqnarray*}
\left( \mathrm{Ric} - \frac{m}{w} \mathrm{Hess} w\right)(\xi, \xi) &=& \lambda \alpha^2 \tr(S)  - mL^2 \\
\left( \mathrm{Ric} - \frac{m}{w} \mathrm{Hess} w\right)( X, X) &=& \lambda h(X,X)+(1 -\alpha^{2}\left(\mathrm{tr}S\right) + mL \alpha )h\left(S(X),X\right) \\
&& + \left(\frac{1}{\tr(S)-\lambda m}- \alpha^2\right) h([S,A](X), X)
\end{eqnarray*}
We wish to show there is a solution of $\alpha$ and $L$ such that $ \mathrm{Ric} - \dfrac{m}{w} \mathrm{Hess} w = \lambda g$.  From the first equation above we can see that a necessary condition is that
\[
\alpha^2 \tr(S) = 1 + \frac{m}{\lambda}L^2.
\]
Plugging this condition into the trace of the second equation above we obtain,
\[
mL\left(\alpha - \frac{L}{\lambda}\right) = 0
\]
which indicates we should either choose $L=0$ or $L= \lambda \alpha$. The case $L=0$ corresponds to the Einstein extension discussed in Theorem \ref{thm:EinsteinConstruction}. In the other case plugging $L= \lambda \alpha$ back into the first equation gives
\[
\alpha^2 = \frac{1}{\tr(S) - \lambda m}
\]
This choice of $\alpha$ also makes the $[S,A]$ term vanish.  Note that since $\tr(S)>0$ and $\lambda<0$ the quantity on the right is positive, so there is a solution to this equation.
\end{proof}
We note that, in the special case where $N$ is a normal semi-algebraic Ricci soliton this gives us the following. 

\begin{cor} \label{cor:SolitonQuasiEinstein}
Let $(N,h)$ be a non-flat  H-homogeneous normal semi-algebraic Ricci soliton.  For every $m>0$ there is a one-dimensional extension of $(N,h)$, $(M, g_m)$ such that $(M,g_m)$ is a $(\lambda, n+m)$-Einstein manifold.
\end{cor}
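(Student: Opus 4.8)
The plan is to deduce Corollary~\ref{cor:SolitonQuasiEinstein} from Theorem~\ref{thm:WPE1extension} by checking that a non-flat, $H$-homogeneous normal semi-algebraic Ricci soliton $(N,h)$ satisfies the three hypotheses of that theorem. Recall from Proposition~\ref{prop:Jabl} that a semi-algebraic soliton has $\Ric^N = \lambda I + S$ where $S$ is the symmetrized projected derivation. Since the soliton is normal, $[S,A]=0$, so the $[S,A]$-correction term in hypothesis (1) of Theorem~\ref{thm:WPE1extension} is vacuous and hypothesis (1) reads simply $\Ric^N = \lambda I + S$ --- precisely the soliton equation. So that condition is automatic.

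Next I would verify conditions (2) and (3), namely $\operatorname{div}(S)=0$ and $\tr(S^2)=-\lambda\tr(S)$. These are exactly the two identities already extracted in the proof of Theorem~\ref{thm:EinsteinExtDnormal}: by exponentiating $S$ to a Lie derivative $\tfrac12\mathscr{L}_Y h$ one gets a Ricci soliton structure on $N$, hence $\operatorname{div}(\mathscr L_Y h) = 2\operatorname{div}(\Ric) = d\,\scal = 0$ (the scalar curvature of a homogeneous space is constant), giving $\operatorname{div}(S)=0$; and the soliton identity $\Delta\scal - D_Y\scal = \lambda\scal - |\Ric|^2$ with $\scal$ constant gives $|\Ric|^2 = \lambda\scal$, i.e. $\tr(S^2) = -\lambda\tr(S)$. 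I would simply cite the proof of Theorem~\ref{thm:EinsteinExtDnormal} for these. One should also note $\lambda<0$: a non-flat non-trivial homogeneous Ricci soliton is expanding, and $\tr(S)>0$ since otherwise $\scal = \lambda(n-1) + \tr(S) \le 0$ combined with $|\Ric|^2 = \lambda\scal \ge 0$ would force $\scal = 0$ and hence flatness; this positivity of $\tr(S)$ together with $\lambda<0$ and $m>0$ is what makes $\alpha^2 = 1/(\tr(S)-\lambda m) > 0$ well-defined, which is the only place the argument could break.

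With all three hypotheses of Theorem~\ref{thm:WPE1extension} confirmed, that theorem directly produces, for each integer $m>0$, the one-dimensional extension $(M,g_m)$ of $(N,h)$ with $\alpha^2 = 1/(\tr(S)-\lambda m)$ and warping function $w(r) = e^{Lr}$, $L = \lambda\alpha$, which is a $(\lambda,n+m)$-Einstein manifold. This $(M,g_m)$ is $G$-homogeneous by construction of the one-dimensional extension, completing the proof. I do not anticipate a genuine obstacle here --- the content is entirely in Theorems~\ref{thm:EinsteinExtDnormal} and~\ref{thm:WPE1extension}; the only care needed is bookkeeping the sign conditions ($\lambda<0$, $\tr(S)>0$) to see the extension is non-degenerate, and observing that normality is exactly what kills the $[S,A]$ term so that the soliton equation matches hypothesis (1).
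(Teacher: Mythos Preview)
Your proposal is correct and follows essentially the same approach as the paper: cite the proof of Theorem~\ref{thm:EinsteinExtDnormal} for the identities $\mathrm{div}(S)=0$ and $\tr(S^2)=-\lambda\tr(S)$, use normality to kill the $[S,A]$ term so that hypothesis~(1) of Theorem~\ref{thm:WPE1extension} becomes the soliton equation, and then invoke Theorem~\ref{thm:WPE1extension}. Your additional care in justifying $\lambda<0$ and $\tr(S)>0$ (to ensure $\alpha^2>0$) is a nice elaboration that the paper leaves implicit.
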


\begin{proof}
Let $(N^{n-1}, h)$ be a semi-algebraic Ricci soliton
\[
\Ric = \lambda I + S
\]
for some constant $\lambda < 0$. As in the proof of Theorem \ref{thm:EinsteinExtDnormal}, we have 
\[
\mathrm{div} S = 0,\quad \tr(S^2) = - \lambda \tr(S) \quad \text{and}\quad \abs{\Ric}^2 = \lambda \scal.
\]
If $D$ is normal, then $[S,A] = 0$ and from Theorem \ref{thm:WPE1extension} there is a one-dimensional extension of $(N, h)$ which is $(\lambda, n+m)$-Einstein. 
\end{proof}

\begin{rem}
The examples in Theorems \ref{thm:EinsteinConstruction} and \ref{thm:WPE1extension} illustrate that there can  be different base metrics on a given topological manifold which produce different homogeneous warped product Einstein metrics. More precisely, let $(N^{n-1},h)$ be a semi-algebraic Ricci soliton with a normal derivation $D$ and  $M^n = N \times \mathbb{R}$. Then on the product manifold
\[
E^{n+m} = M \times \mathbb{R}^m
\]
there are two homogeneous, warped product Einstein metrics.  One is the product metric $g_0 + g_{H^m}$ where $g_0$ is the Einstein metric on $M$ constructed in Theorem \ref{thm:EinsteinConstruction} and $g_{H^m}$ is the hyperbolic space with Ricci curvature $\lambda$.  The other is the metric $g_m + e^{Lr} g_{\mathbb{R}^m}$ where $g_m$ is the metric constructed in Theorem \ref{thm:WPE1extension}, and $g_{\mathbb{R}^m}$ is the Euclidean metric.  We will verify that the metric $g_m + e^{Lr} g_{\mathbb{R}^m}$ is homogeneous in Theorem \ref{thm:homogeneousE}
\end{rem}

\begin{rem}
The family of metrics $g_m$ only differ by the choice of the structure constant given by $\alpha$.  As $m \rightarrow \infty$, both $\alpha$ and  $L$ approach $0$ and thus the metrics $g_m$ on $M$ are converging to the product metric $g = h+ dt^2$ on $N \times \mathbb{R}$. Moreover,  for any $X\in TN$,  we have
\begin{eqnarray*}
\frac{m}{w} \mathrm{Hess} w(X, X) &=& - mL\alpha h(S(X),X))\\
&=& -\lambda m \alpha^2 h(S(X),X)\\
&=&  - \frac{\lambda m}{\tr(S) - \lambda m} h(S(X), X).
\end{eqnarray*}
So the symmetric 2-tensor $ \dfrac{m}{w} \mathrm{Hess} w$ is  converging to $S=\frac{1}{2} \mathscr{L}_Y h$ on $N$.  On the other hand the quantity $\dfrac{m}{w} \mathrm{Hess} w(\xi, \xi)$ is blowing up as $m\rightarrow \infty$.

Also note that the corresponding measures
\[ w^m dvol_{g_m} \]
do not converge as $dvol_{g_m} \rightarrow dvol_g$ but $w^m = e^{L mr}$ and
\[
\abs{Lm}= \dfrac{-\lambda m}{\left(\tr(S) - \lambda m\right)^{1/2}} \rightarrow \infty.
\]
\end{rem}

\medskip
\section{Homogeneous $(\lambda, n+m)$-Einstein manifolds}

In the previous section we discussed how homogeneous  $(\lambda, n+m)$-Einstein metrics can be constructed via one-dimensional extensions.  A natural question is whether every non-trivial $(\lambda, n+m)$-Einstein metric is a one-dimensional extension of some space.  We will show this is true in the next section.   In this section we prepare for the proof of this result by applying  Theorems \ref{thm:wpedecomposition} and \ref{thm:ExistenceMBF} along with the results from \cite[Section 5]{HPWuniqueness}  to  homogeneous manifolds. We show that the base manifold of a homogeneous $(\lambda, n+m)$-Einstein manifold is either $\lambda$-Einstein or has a special form. See Lemma \ref{lem:basehomogeneous}. We also show that from a homogeneous $(\lambda, n+m)$-Einstein manifold $M$, one can find a homogeneous $\lambda$-Einstein manifold as the total space of a warped product over $M$, see Theorem \ref{thm:homogeneousE}.

First we consider base manifolds. Let $B$ be a base manifold with $\partial B = \emptyset$ and let $G$ be a transitive group of isometries acting on $B$.  Fix a point $x \in B$ and let $G_x$ be the isotropy group at $x$, i.e.,
\[
G_x = \{ \sigma \in G : \sigma(x) = x \}.
\]
Let $H$ be the subgroup which fixes  $u \in W_{\lambda, b+(k+m)}(B,g_B)$
\begin{eqnarray*}
H &=& \{ \sigma \in G : u \circ \sigma^{-1} = u \} \\
&=& G \cap \mathrm{Iso}(B, g_B)_u.
\end{eqnarray*}
Note that since $H$ is the kernel of the group homomorphism from $G$ to $\Real$, see \cite[Definition 5.3]{HPWuniqueness}, $H$ is a normal subgroup. Also note that from \cite[Proposition 5.4]{HPWuniqueness} $H$ contains $G_x$ and so we have
\[
H/G_x \subset G/G_x .
\]

\begin{lem} \label{lem:basehomogeneous}
Let $(B, g_B)$ be a base manifold which is  homogeneous and has $\partial B = \emptyset$. Then either
\begin{enumerate}
\item $(B, g_B) $ is $\lambda$-Einstein, or
\item  $\mu(u) =0$, $B$ is noncompact,  the action of $H$  on $B$ is cohomogeneity one with
\[
r: B \mapsto  B /H = \mathbb{R}
\]
where $H$ acts transitively on the level sets of $u$, $r$ is a smooth distance function and $u$ is of the form
\[
 u = Ae^{L r}
\]
for some constants $A$ and $L$.
\end{enumerate}
\end{lem}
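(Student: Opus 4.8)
The strategy is to feed the homogeneous base manifold $(B,g_B)$ into the structure theory for $(\lambda, n+m)$-Einstein manifolds recalled in Section~2.3, using the fact that being a base manifold forces $\dim W$ to be small, and then to exploit the description of the isometry group of a warped product from Theorem~\ref{thm:isometryMBF}. First I would dispose of the case $\mu(u)\neq 0$. If $\mu(u) > 0$ and $\partial B = \emptyset$, then $u$ can have no zeros and, being a nonconstant function in $W$ spanning a one-dimensional space together with the ambient Einstein structure, one gets a contradiction with the maximum-principle/rigidity results from \cite{HPWuniqueness} — concretely, a nonconstant $u\in W$ with $\mu(u)\neq 0$ and no zeros on a complete homogeneous manifold cannot exist unless $u$ is constant (its gradient flow and the compactness of level sets under the transitive isometry action would force a critical point, hence by the structure theory $\partial B\neq\emptyset$, contradiction). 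If $\mu(u)<0$ one argues similarly. If $u$ is constant, then by \eqref{eqn:Wspacewpe} the defining equation $\Hess u = \frac{u}{m}(\Ric - \lambda g)$ with $u\neq 0$ constant forces $\Ric = \lambda g$, i.e.\ case~(1). So the only remaining case is $u$ nonconstant with $\mu(u) = 0$.

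Now assume $\mu(u)=0$ and $u$ nonconstant. I would first argue $B$ is noncompact: a nonconstant $u$ on a compact $B$ attains a max and a min, but the rigidity/structure results show a nonconstant element of $W$ on a compact manifold forces $\lambda > 0$ and a very restricted geometry incompatible with being a base manifold of this type (alternatively, combine with the remark that $\mu(u)=0$ cannot happen on a compact base without boundary). Next comes the heart of the argument: showing the action of $H$ is cohomogeneity one with quotient $\mathbb{R}$ realized by a distance function, and that $u = Ae^{Lr}$. Here I would use Theorem~\ref{thm:isometryMBF}(2) applied \emph{formally}: since $(B,g_B,u)$ is an irreducible base manifold, $B$ itself plays the role of the "$B$" in that theorem for the $k=1$ elementary warped product extension, so the isometry group of $B$ decomposes, and the homomorphism $G \to \mathbb{R}$ sending $h_1 \mapsto \log C_{h_1}$ (where $u\circ h_1^{-1} = C_{h_1} u$) has kernel $H$. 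Since $G$ acts transitively on $B$, the image of this homomorphism is all of $\mathbb{R}$ (it is a nontrivial connected subgroup of $\mathbb{R}$ because $u$ is nonconstant and $G$ acts transitively, so it moves $u$ by a continuum of homotheties), giving $B/H \cong \mathbb{R}$ and $H$ transitive on level sets of $u$. Finally, since $H$ acts transitively on level sets of $u$ and these are the level sets of a smooth function with (after checking) non-vanishing gradient of constant norm along each level set, the function $r$ defined by normalizing is a smooth distance function and the equation $\mu(u)=0$ together with the ODE that $u$ satisfies along the $r$-direction — obtained by restricting $\Hess u = \frac{u}{m}(\Ric-\lambda g)$ and using homogeneity along level sets — integrates to $u = Ae^{Lr}$.

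The main obstacle I anticipate is the last step: showing that the level sets of $u$ are exactly the orbits of $H$ and that $\abs{\nabla u}$ depends only on $u$ (so that a genuine distance function $r$ exists with $B/H = \mathbb{R}$ smoothly). This requires knowing that $\nabla u$ never vanishes on $B$ (which uses $\partial B = \emptyset$ and that $u$ spans $W$ with $\mu(u)=0$ — a zero of $\nabla u$ would be a critical point forcing, via the structure theory of Section~2.3, either $\partial B \neq \emptyset$ or $u$ constant) and that the norm $\abs{\nabla u}$ is constant on connected components of level sets of $u$ by $G$-invariance, hence a function of $u$ alone. Once $\abs{\nabla u} = f(u)$ is established, the substitution $r = \int du / f(u)$ produces the distance function, and the precise exponential form $u = Ae^{Lr}$ drops out of tracing the $(\lambda, n+m)$-Einstein equation in the $\nabla r$ direction together with $\mu(u) = 0$; I expect this to be a short computation rather than a sticking point. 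The bookkeeping to confirm that $H$ (defined as the $u$-stabilizer in $G$) is precisely the isotropy of the $\mathbb{R}$-action — rather than a proper subgroup — uses \cite[Proposition 5.4]{HPWuniqueness} and the normality of $H$ noted just before the lemma, so that part should go through cleanly.
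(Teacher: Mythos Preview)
Your overall strategy and the paper's rely on the same external ingredients from \cite{HPWuniqueness}, but the organization differs, and one of your steps is not justified as written.

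The paper does not case-split on the sign of $\mu(u)$. Instead it dichotomizes on whether $H$ acts transitively on $B$: if so, $u$ is $H$-invariant hence constant, and $B$ is $\lambda$-Einstein; if not, $H$ has codimension one in $G$, so the $H$-action is cohomogeneity one, and then \cite[Proposition 5.4]{HPWuniqueness} is invoked directly to conclude simultaneously that $B$ is noncompact and $\mu(u)=0$. Your attempt to rule out $\mu(u)\neq 0$ by an ad hoc argument---``gradient flow and the compactness of level sets under the transitive isometry action would force a critical point, hence by the structure theory $\partial B\neq\emptyset$''---does not work as stated: there is no reason the level sets of $u$ should be compact, and a critical point of $u$ on the interior does not by itself create boundary. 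The conclusion you want is precisely what Proposition~5.4 delivers, and you already cite it at the end for a different purpose; you should invoke it here instead. Once that is done, your separate case-split on $\mu(u)$ becomes unnecessary.

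For the exponential form $u=Ae^{Lr}$, the paper's route is shorter than yours. It chooses a one-parameter subgroup $h_s\subset G$ carrying $x$ along an integral curve of $\nabla r$; the generating Killing field decomposes as $X^*=\nabla r+Y$ with $Y$ tangent to the level sets of $u$, and \cite[Proposition 5.2]{HPWuniqueness} gives $D_{X^*}u=Lu$ for a constant $L$, whence $D_{\nabla r}u=Lu$ and $u=Ae^{Lr}$. Your proposed method (combine the trace of the Hessian equation, which gives $\Delta u = cu$ since $\mathrm{scal}$ is constant, with $\mu(u)=0$ to obtain $|\nabla u|^2=L^2u^2$) is valid but more circuitous; the Killing-field argument bypasses any need to manipulate $\mu$ or the scalar curvature. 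The paper also uses the observation $D\sigma_p(\nabla u)=C_\sigma\,\nabla u|_{\sigma(p)}$ to show that a single critical point of $u$ would force $u$ constant, which is what guarantees $B/H=\mathbb{R}$ rather than a half-line; this is cleaner than your structure-theory argument for the same fact.
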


\begin{rem}
In other words we either have  in case (1) that $ H/G_{x} = B $  and $u$ is constant, or in case (2) we have
\begin{eqnarray*}
G/G_x &=& \mathbb{R} \times H/G_x \\
g_B &=& dr^2 + g_r
\end{eqnarray*}
where $g_r$ is a family of $H$-homogeneous metrics on $H/G_x$.
\end{rem}

\begin{proof}
For $\sigma \in H$, $u \circ \sigma^{-1} =  u $. So if $H$ acts transitively on $M$, then $u$ is constant and $B$ is $\lambda$-Einstein.

Otherwise, suppose that $u$ is not constant.  Then $\mathrm{Iso}(B, g_B)_u$ must be a codimension one subgroup of $\mathrm{Iso}(B, g_B)$ and thus $H \subset G$ also has codimension one. So $H$ acts on $B$ by cohomogeneity one. In this case, from \cite[Proposition 5.4]{HPWuniqueness} we know that $B$ is noncompact and $\mu(u)=0$. Let $r$ be the quotient map
\[
r: B \mapsto B/H.
\]
Since $u$ is preserved by $H$, $u$ can be written as a function of $r$, $u=u(r)$. The fact that $D\sigma_p(\nabla u) = C  \nabla u |_{\sigma(p)}$ for any $\sigma \in G$ shows that if $u$ has a critical point, then $u$ is constant.  This shows that $B/ H$ must be all of $\mathbb{R}$.

Let $\gamma$ be a unit speed integral curve of $\nabla r$ with $\gamma(0) = x$. Define $h_s$ to be a one-parameter subgroup of isometries taking $\gamma(0)$ to $\gamma(s)$.  The differential of $h_s$ gives a Killing vector field $X^* = \nabla r + Y$ where $Y$ is tangent to the level surfaces of $u$. From \cite[Proposition 5.2]{HPWuniqueness} we have
\begin{eqnarray*}
L u = D_{X^*}  u = D_{\nabla r} u
\end{eqnarray*}
for some constant $L$.  Integrating  this implies that $u = Ae^{Lr}$.

Finally, the fact that $\lambda<0$ follows from \cite[Proposition 4.5]{HPWuniqueness}, since  $u$ is a positive function in $W_{\lambda, b+(k+m)} (B)$  and $B$  has constant scalar curvature.
\end{proof}

\begin{rem}
When $k+m>1$ we can also see that the constant $L$ is determined by the scalar curvature of $B$.  To see this we compute
\begin{eqnarray*}
\mu_B(u)  &=& (m+k-1) |\nabla u|^2 + \frac{u^2}{m+k}( \mathrm{scal}^B - (n-m-k) \lambda) \\
&=& \left( (m+k-1) L^2 + \frac{  \mathrm{scal}^B - (n-m-k)\lambda}{m+k}  \right) e^{2Lr}.
\end{eqnarray*}
Since $\mu_B(u) =0$ we obtain
\begin{equation}\label{eqn:L}
L^2 = -\frac{\mathrm{scal}^B - (n-m-k)\lambda}{(m+k)(m+k-1)}.
\end{equation}
The only case where $L$ is not determined by the above formula is when $k=0$ and $m=1$, in which case $\mu_B$ is always zero.
\end{rem}

\begin{rem}
A different proof of this lemma can be established using the results in \cite{HPWconstantscal} more heavily.  In \cite{HPWconstantscal} we also produced examples showing case (2) is possible. We generalize that  construction in the next section.
\end{rem}

Next we consider the warped product $M = B \times_u F$ where $M$ is homogeneous and $\dim W_{\lambda, n+m}(M) = k+1 > 1$. Then Theorem \ref{thm:isometryMBF} gives us the following proposition.

\begin{prop}\label{prop:homogeneousB}
Let $M$ be a simply connected metric with $\dim W_{\lambda, n+m}(M) = k+1>1$. Then $M$ is homogeneous if and only if its base manifold $B$ has no boundary and is homogeneous.
\end{prop}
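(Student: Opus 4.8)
The plan is to prove the two implications separately, using Theorem \ref{thm:isometryMBF} together with Theorem \ref{thm:wpedecomposition} to pass rigidly between $M$ and its base $B$. For the direction ($\Leftarrow$), suppose $B$ has no boundary and is homogeneous, say with a transitive isometry group $G$. By the remark following Theorem \ref{thm:isometryMBF}, $\mathrm{Iso}(B,g_B)_u \times \mathrm{Iso}(\Real^k)$ embeds in $\mathrm{Iso}(M,g)$; more useful here is that for any $h_1 \in \mathrm{Iso}(B,g_B)$ there is an accompanying $h_2$ (an isometry of $F$ if $\mu(u)\neq 0$, a homothety of $\Real^k$ by the factor $C_{h_1}$ with $u\circ h_1^{-1} = C_{h_1}u$ if $\mu(u)=0$) so that $h_1\times h_2$ is an isometry of $M$. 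I would first check that the set of such pairs forms a group mapping onto $G$. Then, given two points $(b,f), (b',f') \in M = B\times_u F$, I pick $h_1 \in G$ with $h_1(b) = b'$, note that $h_1\times h_2$ sends $(b,f)$ to $(b', h_2(f))$, and finally compose with an isometry of $F$ (resp. a translation of $\Real^k$, which is still an isometry since translations are $1$-homotheties) of the form $\mathrm{id}_B \times k_2$ carrying $h_2(f)$ to $f'$; such $k_2$ exists because $F$ is a space form and $\mathrm{id}_B \times k_2$ is an isometry of $M$ by Theorem \ref{thm:isometryMBF} (with $C = 1$). Hence $M$ is homogeneous. One must be slightly careful when $\mu(u)=0$: the fiber is $\Real^k$ and the relevant moves on the fiber are homotheties, but the subgroup fixing the base point still acts on the fiber through the full isometry group of $\Real^k$, so transitivity on fibers is retained.

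For the direction ($\Rightarrow$), suppose $M$ is homogeneous. By Theorem \ref{thm:wpedecomposition}, $M = B\times_u F$ with $u$ spanning $W_{\lambda,b+(k+m)}(B,g_B)$ and $u^{-1}(0) = \partial B$; I want to conclude $\partial B = \emptyset$ and $B$ homogeneous. First, $\partial B = \emptyset$: if $\partial B \neq \emptyset$ then $u$ vanishes somewhere, and the set $\{u = 0\}$ would be a nonempty proper subset of $M$ invariant under every isometry of $M$ — indeed, by Theorem \ref{thm:isometryMBF} every isometry has the form $h_1\times h_2$ and scales $u$ by a positive constant $C_{h_1}$ (or preserves it), so $\{u=0\}$ is $\mathrm{Iso}(M)$-invariant; this contradicts homogeneity of $M$ (alternatively, $u=0$ exactly on $\partial M$, and a homogeneous manifold has no boundary). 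Next, homogeneity of $B$: given $b, b' \in B$, choose $f \in F$, use homogeneity of $M$ to get an isometry $\sigma$ of $M$ with $\sigma(b,f) = (b',f)$, write $\sigma = h_1\times h_2$ via Theorem \ref{thm:isometryMBF}, and read off $h_1(b) = b'$ with $h_1 \in \mathrm{Iso}(B,g_B)$. Thus $\mathrm{Iso}(B,g_B)$ acts transitively, so $B$ is homogeneous.

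The main obstacle I anticipate is the bookkeeping in the $\mu(u)=0$ case of the ($\Leftarrow$) direction: there the fiber is $\Real^k$ and the isometries of $M$ projecting to a given $h_1 \in \mathrm{Iso}(B,g_B)$ act on the fiber only by $C_{h_1}$-homotheties, not isometries, so one has to argue carefully that the induced action of the full group on fibers is still transitive — the point being that the subgroup $\mathrm{id}_B \times \mathrm{Iso}(\Real^k)$ already acts transitively on each fiber (these are genuine isometries of $M$, corresponding to $C=1$), while the base-moving isometries take care of moving between fibers. Everything else is a direct application of the cited structure and isometry theorems, so no substantial new computation is needed. I would also remark that simple-connectedness of $M$ is used precisely to invoke Theorems \ref{thm:wpedecomposition} and \ref{thm:isometryMBF}, and that $\dim W = k+1 > 1$ guarantees we are genuinely in the warped-product regime rather than the Einstein case where $W$ is spanned by constants.
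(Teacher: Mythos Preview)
Your proof is correct and follows essentially the same approach as the paper's (which is much terser, simply invoking Theorem \ref{thm:isometryMBF} for both directions and observing from its proof that isometries of $M$ preserve the singular set where all functions in $W(M,g)$ vanish, forcing $\partial B=\emptyset$). One small slip: your parenthetical alternative ``$u=0$ exactly on $\partial M$'' is not right, since $M$ itself is a complete manifold without boundary---it is $B$ that may have boundary, and the correct invariant proper subset of $M$ is the image of $\partial B$ (equivalently, the common zero set of $W(M,g)$), exactly as in your main argument.
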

\begin{proof}
From the proof of Theorem \ref{thm:isometryMBF} in \cite{HPWuniqueness} we know that isometries of $M$ preserve the singular set where all functions in $W(M, g)$ vanish, showing that if $M$ is homogeneous then the singular set is empty. It follows that $\partial B = \emptyset$.  Now Theorem \ref{thm:isometryMBF} shows that the isometry group of $M$ acts transitively on $M$ if and only if the isometry group of $B$ acts transitively on $B$.
\end{proof}

The quadratic form $\mu$ on $W(M, g)$ is either positive definite, semi-positive definite with nullity one, or non-degenerate with index one. The manifold $(M, g)$ is said to be \emph{elliptic}, \emph{parabolic} or \emph{hyperbolic} respectively, in each of the three cases, see \cite[Section 2]{HPWuniqueness}. As a simple consequence of the previous lemma and proposition, we note the following

\begin{cor}
Let $M$ be a simply connected homogeneous metric with $W_{\lambda, n+m}(M) \neq \{ 0\}$. If either $\lambda \geq 0$, or  $M$ is elliptic or hyperbolic, then $M$ is isometric to the Riemannian product $B \times F$.
\end{cor}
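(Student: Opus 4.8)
The plan is to realize $M$ as a warped product over a \emph{homogeneous} base manifold and then read the conclusion off Lemma \ref{lem:basehomogeneous}. Write $\dim W_{\lambda, n+m}(M,g) = k+1$; since $W\neq\{0\}$ we have $k\geq 0$. If $k\geq 1$, Theorem \ref{thm:wpedecomposition} decomposes $M = B^{b}\times_{u} F^{k}$ with $(B,g_B,u)$ a base manifold, and Proposition \ref{prop:homogeneousB} (using that $M$ is simply connected and homogeneous) shows that $B$ has empty boundary and is homogeneous. If $k=0$, then $M$ is itself a $(\lambda,m)$-base manifold for a spanning element $u$ of the one-dimensional space $W_{\lambda,n+m}(M,g)$, it is homogeneous, and it has empty boundary; set $B=M$ and let $F$ be a point. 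In every case we are in the situation of Lemma \ref{lem:basehomogeneous}, applied to $B$.

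First I would dispose of case (1) of Lemma \ref{lem:basehomogeneous}: there $u$ is constant, so the warped product metric $g_B + u^2 g_F$ is the genuine Riemannian product metric on $B\times F$ (and when $k=0$ this just records that $M=B$ is $\lambda$-Einstein, i.e.\ a product with a point), so there is nothing further to prove. It remains to exclude case (2) under each of the three hypotheses. In case (2) Lemma \ref{lem:basehomogeneous} gives $\mu_B(u)=\mu(u)=0$, and the final step of its proof gives $\lambda<0$; hence the hypothesis $\lambda\geq 0$ rules out case (2) at once. For the remaining two hypotheses I would invoke the fact --- part of the structure theory of \cite{HPWuniqueness}, and visible through Theorem \ref{thm:wpedecomposition}(3)--(4), where the fiber $F$ is a space form with $\Ric_F = \mu_B(u)\, g_F$ and $W_{\lambda,n+m}(M,g)$ is carried to $W_{\mu_B(u),k+m}(F,g_F)$ by $v\mapsto uv$ --- that $(M,g)$ is elliptic, parabolic, or hyperbolic according as $\mu_B(u)$ is positive, zero, or negative, the degenerate case $k=0$ being handled directly since there $\mu$ is the one-dimensional form with value $\mu(u)$. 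Thus in case (2), where $\mu_B(u)=0$, the space $M$ is parabolic, so neither the ``elliptic'' nor the ``hyperbolic'' hypothesis can hold. Therefore case (1) occurs in all cases and $M$ is isometric to the Riemannian product $B\times F$.

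The whole argument is an assembly of Theorem \ref{thm:wpedecomposition}, Proposition \ref{prop:homogeneousB}, and Lemma \ref{lem:basehomogeneous}, so I do not expect a genuine obstacle. The only points needing a little care are the bookkeeping in the degenerate case $\dim W = 1$ (where the ``fiber'' is a point and one works directly with the one-dimensional quadratic form $\mu$), and making explicit the correspondence between the elliptic/parabolic/hyperbolic trichotomy of $M$ and the sign of $\mu_B(u)$ --- both routine consequences of the results already recalled.
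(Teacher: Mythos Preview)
Your proposal is correct and follows essentially the same route as the paper: the paper simply records this corollary as ``a simple consequence of the previous lemma and proposition'' (i.e.\ Lemma \ref{lem:basehomogeneous} and Proposition \ref{prop:homogeneousB}) without spelling out the argument, and what you have written is precisely the natural unpacking of that sentence, including the case split $k=0$ versus $k\ge 1$ and the identification of case (2) of Lemma \ref{lem:basehomogeneous} with the parabolic situation.
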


As another  application, we prove that if the base of a warped product Einstein metric is homogeneous, then there is  a warped product metric with the same base which is homogeneous and Einstein.

\begin{thm}\label{thm:homogeneousE}
Let $m>0$ be an integer and let $M$ be a simply connected homogeneous manifold with $W_{\lambda, n+m}(M,g) \neq \{0\}$.  If  there exists  a positive function $w \in W_{\lambda, n+m}(M,g)$ and $F^m$  is the simply connected space form with  Ricci curvature $\mu(w)$, then the warped product metric
\[ E = M \times_w F \]
is both $\lambda$-Einstein and homogeneous.
\end{thm}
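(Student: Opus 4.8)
The Einstein assertion is the standard warped-product computation recalled in the Introduction: since $w\in W_{\lambda,n+m}(M,g)$ is positive and $F^m$ is the simply connected space form with $\Ric_{g_F}=\mu(w)g_F$, the metric $g_E=g+w^2g_F$ satisfies $\Ric_{g_E}=\lambda g_E$. The substance is homogeneity, and the plan is to reduce it to the classification of homogeneous base manifolds in Lemma \ref{lem:basehomogeneous}. First I would apply Theorem \ref{thm:wpedecomposition} to write $M=B^b\times_u F^k$ with $k+1=\dim W_{\lambda,n+m}(M)\ge 1$, where $u$ spans $W_{\lambda,b+(k+m)}(B,g_B)$, $F^k$ is a simply connected space form, and $W_{\lambda,n+m}(M)=\{uv:v\in W_{\mu_B(u),k+m}(F,g_F)\}$. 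By Proposition \ref{prop:homogeneousB}, $B$ is homogeneous with $\partial B=\emptyset$, so $u>0$ on $B$; write $w=uv_0$ with $v_0>0$. The warping functions then regroup,
\[
g_E \;=\; g_B + u^2 g_{F^k} + (uv_0)^2 g_{F^m} \;=\; g_B + u^2\bigl(g_{F^k}+v_0^2 g_{F^m}\bigr),
\]
so that $E=B\times_u\hat F$ with the new fiber $\hat F:=F^k\times_{v_0}F^m$.

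Next I would show that $\hat F$ is a simply connected space form, hence homogeneous. When $k=0$ this is the hypothesis on $F^m$, so assume $k\ge 1$. Because $F^k$ is a space form and $\dim W_{\mu_B(u),k+m}(F^k)=k+1$ is the maximal possible, its constant curvature is forced to equal $c:=\mu_B(u)/(k+m-1)$, and then the defining equation for $W_{\mu_B(u),k+m}(F^k)$ simplifies to $\Hess v_0=-c\,v_0\,g_{F^k}$. Feeding the constant curvature of $F^k$ and this Hessian identity into the warped-product sectional-curvature formulas, one checks that every sectional curvature of $\hat F=F^k\times_{v_0}F^m$ equals $c$: the tangential ones because $F^k$ is totally geodesic in $\hat F$, the mixed ones because they equal $-\Hess v_0(X,X)/v_0=c$, and the fiber ones because they equal $(K_{F^m}-|\nabla v_0|^2)/v_0^2$, a quantity which is constant (since $\nabla(c\,v_0^2+|\nabla v_0|^2)=0$) and equals $c$ by the choice of $F^m$ as the space form with Ricci curvature $\mu(w)=\mu_{F^k}(v_0)$. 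Being a complete simply connected manifold of constant curvature, $\hat F$ is homogeneous; moreover when $\mu_B(u)=0$ it is flat, so $\hat F=\mathbb{R}^{k+m}$.

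Finally I would invoke Lemma \ref{lem:basehomogeneous} for $B$. In case (1), $B$ is Einstein and $u$ is constant, so $E=B\times\hat F$ is a Riemannian product of two homogeneous manifolds, hence homogeneous. In case (2), $\mu_B(u)=0$ (so $\hat F=\mathbb{R}^{k+m}$), $B$ is noncompact and is acted on with cohomogeneity one by the normal subgroup $H$ of a transitive isometry group preserving $u$, with quotient map a smooth distance function $r\colon B\to\mathbb{R}$, with $H$ transitive on the level sets of $r$, and $u=Ae^{Lr}$. Then $E=B\times_{Ae^{Lr}}\mathbb{R}^{k+m}$, and I would produce a transitive isometry group generated by three families: $(h,\mathrm{id})$ with $h\in H$; $(\mathrm{id},\sigma)$ with $\sigma\in\mathrm{Iso}(\mathbb{R}^{k+m})$; and ``radial'' maps $(h_s,\delta_s)$, where $h_s\in\mathrm{Iso}(B)$ is the one-parameter family moving along an integral curve of $\nabla r$ constructed in the proof of Lemma \ref{lem:basehomogeneous} (so $u\circ h_s^{-1}=e^{-Ls}u$) and $\delta_s$ is the homothety of $\mathbb{R}^{k+m}$ by the reciprocal factor; the transformation rule of the warping function makes each $(h_s,\delta_s)$ an isometry of $E$, exactly as in the homothety alternative of Theorem \ref{thm:isometryMBF}. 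Given $(x_1,f_1),(x_2,f_2)\in E$, one uses an element of $H$ to move $x_1$ onto the integral curve, an $(h_s,\delta_s)$ to reach the level set $\{r=r(x_2)\}$, another element of $H$ to land on $x_2$, and a translation of $\mathbb{R}^{k+m}$ to correct the fiber coordinate; hence $E$ is homogeneous in all cases. I expect the main obstacle to be the curvature computation identifying $\hat F$ as a space form — pinning down the forced curvature $c$ and checking the three types of sectional curvature — together with the bookkeeping in case (2) confirming that the maps $(h_s,\delta_s)$ are genuine isometries and that the three families act transitively.
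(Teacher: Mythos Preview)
Your proposal is correct and follows essentially the same route as the paper: pass to the base decomposition $M=B\times_u F^k$, regroup the total space as $E=B\times_u\hat F$ with $\hat F=F^k\times_{v_0}F^m$, recognize $\hat F$ as a simply connected space form, and then invoke Lemma~\ref{lem:basehomogeneous} on $B$ (together with the homothety alternative of Theorem~\ref{thm:isometryMBF}) to produce a transitive isometry group on $E$. The paper organizes the case split a bit differently---it first separates ``$M$ is a base manifold'' from ``$M$ is not,'' and singles out the constant-curvature case---whereas you treat all cases uniformly through the decomposition; and where the paper simply asserts that $\hat F$ (or $\widetilde F\times_v F$) is a space form by appeal to the classical warped-product models, you supply the explicit sectional-curvature check. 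One small point to tidy: your identification of the fiber sectional curvatures uses $\mu_M(w)=\mu_{F^k}(v_0)$, which you assert but do not justify; the quickest way to close this is to note that since $E$ is $\lambda$-Einstein and $E=B\times_u\hat F$ with $u\in W_{\lambda,b+(k+m)}(B)$, the fiber $\hat F$ is Einstein with $\Ric=\mu_B(u)\,g_{\hat F}=(k+m-1)c\,g_{\hat F}$, and combining this with your computation of the tangential and mixed curvatures forces the remaining ones to equal $c$ as well.
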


\begin{proof}
The proof breaks into various cases.

We first assume that $M$ is a base manifold.   By Lemma \ref{lem:basehomogeneous} there are two cases.   First,  if $M$ is $\lambda$-Einstein then clearly taking $w=c$ a constant and $F$ a space form with Ricci curvature $\dfrac{\lambda}{c^2}$ will make $E$ a homogeneous $\lambda$-Einstein metric. On the other hand, if $M$ is a base manifold and is not $\lambda$-Einstein, then we know that   $w=Ae^{Lr}$ and $\mu(w) = 0$. In particular,  $F  = \mathbb{R}^m$.    We also have that, if  $\sigma_1 \in \mathrm{Iso}(M,g_M)$ then  $w \circ \sigma_1 = C w$  for some constant $C = C_{\sigma_1}$. By Theorem \ref{thm:isometryMBF}, or \cite[Lemma 5.6]{HPWuniqueness},  the product map $\sigma_1 \times \sigma_2 $ is an isometry of $(E,g_{E})$ where $\sigma_2$ is a $C$-homothety of $\mathbb{R}^m$.  This  gives us  a transitive group of isometries acting on $E$.

Next we assume that $M$ is not a base manifold.  If  $M$ is a space of constant curvature then the theorem is true by the special form of the warping functions, see \cite[Example 2.1]{HPWuniqueness}. Note that the sphere does not have a positive function in  $W_{\lambda, n+m}(M,g)$. Otherwise, from Lemma \ref{lem:basehomogeneous} again we also have two different cases. In the first case, we have $M=B\times \widetilde{F}$ where $B$ is $\lambda$-Einstein and $\widetilde{F}$ is a space form. We know that if $\lambda>0$, there are no positive functions in $W_{\lambda, n+m}(M)$.  When $\lambda \leq 0$, we have that  $w = Av$ where $v$ a positive function in $W_{ \lambda, k+m}(\widetilde{F})$. Take another space form $F$ such that $\widetilde{F}\times_{v} F$ is a homogeneous $\lambda$-Einstein manifold.  Then $E$ is  a product of $\lambda$-Einstein manifolds $B$ and $\widetilde{F}\times_{v} F$ which are both homogeneous. Finally, in the second case we have
\[ g_M =  g_B + e^{Lr} g_{\mathbb{R}^k}\]
where $B$ is a base manifold.   Since $w>0$ on $M$ this tells us that $w=A e^{Lr}$ for some constant $A$. Then we can write
\begin{eqnarray*}
g_{E} &=& g_B + e^{Lr}( g_{\mathbb{R}^k} + A g_{\mathbb{R}^m} ) \\
&=& g_B + e^{Lr} g_{\mathbb{R}^{k+m}}.
\end{eqnarray*}
Since $B$ is a base manifold, we can now  apply the base manifold case that we already discussed to this metric to show that $(E, g_{E})$ is homogeneous. This finishes the proof.
\end{proof}

\begin{rem}\label{rem:homogeneous}
Note that in the last case of the previous Theorem \ref{thm:homogeneousE}, when
\begin{eqnarray*}
g_M = g_B + e^{Lr} g_{\mathbb{R}^k}  & \text{ and } & w(r) =  A e^{Lr}
\end{eqnarray*}
that we still have the property that $w \circ \sigma^{-1} = C_{\sigma} w$ for any $\sigma \in \mathrm{Iso}(M,g)$, even though  $M$ is not a base manifold. Writing $M = G/G_x$ this allows us to conclude, as we did at the beginning of this section in the base manifold case, that
\[
G/G_{x} = \mathbb{R} \times H/G_{x} 
\]
with
\[
g_M = dr^2 + g_r \quad \text{and} \quad w = Ae^{L r}
\]
where $g_r$ is a family of $H$-homogeneous metrics on $H/G_{x}$.
\end{rem}

\medskip
\section{Non-trivial Homogeneous $(\lambda, n+m)$-Einstein manifolds are one-dimensional extensions}

In this section we characterize homogeneous $(\lambda, n+m)$-Einstein manifolds using one-dimensional extensions, see Theorem \ref{thm:structureWPEextension}. From this structure theorem we prove Theorem \ref{thm:WPE1extensionintro}, see Theorem \ref{thm:WPEextDnormal}, and Corollary \ref{cor:SARiccisiltonDnormal}. 

\begin{thm}\label{thm:structureWPEextension}
Let $(M^n,g)$ be a homogeneous $(\lambda, n+m)$-Einstein space which is not Einstein.  Then $(M,g)$ is the one-dimensional extension of a homogeneous space $(N^{n-1},h)$ with a derivation $D$ and $\alpha^2 = \dfrac{1}{\tr(S) - \lambda m}$ satisfying the following conditions.
\begin{enumerate}
\item $\Ric^N = \lambda I + S + \dfrac{1}{\tr(S)-\lambda m} [S,A] $
\item $\mathrm{div}(S) = 0$
\item $\tr(S^2) = - \lambda \tr(S)$,
\end{enumerate}
where $\lambda<0$. Moreover the warping function $w \in C^\infty(M)$ is given by $w(r) = e^{L r}$ where $L = \lambda \alpha$ and $r$ is the signed distance function to $N$.
\end{thm}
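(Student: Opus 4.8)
The plan is to run the argument of Theorem \ref{thm:WPE1extension} in reverse, using the structure theory from Section 4 to produce the hypersurface $N$ and the derivation $D$. Since $(M,g)$ is a non-trivial homogeneous $(\lambda,n+m)$-Einstein manifold, $W_{\lambda,n+m}(M,g)$ contains the non-constant warping function $w$, and passing to the universal cover we may assume $M$ is simply connected. If $\dim W_{\lambda,n+m}(M,g)=k+1>1$ we may apply Proposition \ref{prop:homogeneousB} and Remark \ref{rem:homogeneous}: either $M$ already splits off a constant-curvature factor and reduces to the base-manifold case, or the base $B$ is itself $\lambda$-Einstein and $M = B\times_u F$ with $u=Ae^{Lr}$. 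In all of these cases the essential point is the same, so the first step is to reduce to understanding the ``radial'' hypersurface structure: $M = \mathbb{R}\times H/G_x$ with $g_M = dr^2 + g_r$, $w = Ae^{Lr}$, where $H$ is the subgroup of $\mathrm{Iso}(M,g)$ fixing $w$ up to scale, $H$ acts transitively on each level set $N_r := \{r = \text{const}\}$ of $w$, and $r$ is a smooth signed distance function. By rescaling we may take $A=1$, so $w=e^{Lr}$.

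Next I would identify $N := N_0 = H/G_x$ with its induced metric $h$ as an $H$-homogeneous space and check that $(M,g)$ is the one-dimensional extension of $(N,h)$. The point is that the normal flow of $N$ is generated by a vector field $\xi=\nabla r$ which is $H$-invariant, so the shape operator $T$ of $N\subset M$ is $H$-invariant, and the flow of $\xi$ defines a one-parameter group of diffeomorphisms of $M$ whose combination with $H$ gives a transitive action of a group $G$ which is a semi-direct product $H\ltimes\mathbb{R}$. On the Lie algebra level $\ad_\xi$ acts on $\mathfrak{h}$ as a derivation (it preserves $\mathfrak{h}$ since $H$ is normal); writing $\ad_\xi = \alpha D$ with $D|_\frakk=0$ and normalizing $\alpha$ appropriately, this exhibits $(M,g)$ as the one-dimensional extension of $(N,h)$ by $(D,\alpha)$ in the sense of Section 2.2. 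Here Proposition \ref{prop:1extensionST} gives $T = -\alpha S$ where $S$ is the symmetric part of $\mathrm{pr}\circ D$.

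With this structure in hand, the conditions (1)--(3) and the values of $\alpha$, $L$ are forced by the $(\lambda,n+m)$-Einstein equation $\mathrm{Ric} - \frac{m}{w}\mathrm{Hess}\,w = \lambda g$ together with Lemma \ref{lem:RicciGKHK}. Computing $\mathrm{Hess}\, w = L^2 w\, dr\otimes dr + Lw\, g(T(\cdot),\cdot)$ exactly as in the proof of Theorem \ref{thm:WPE1extension}, the $(\xi,\xi)$-component gives the necessary relation $\alpha^2\tr(S) = 1 + \frac{m}{\lambda}L^2$; the trace of the tangential component then yields $mL(\alpha - L/\lambda)=0$, and since $w$ is non-constant $L\neq 0$, forcing $L=\lambda\alpha$; feeding this back in gives $\alpha^2 = 1/(\tr(S)-\lambda m)$, which simultaneously makes the $[S,A]$ term in $\mathrm{Ric}^M(X,X)$ vanish and leaves precisely condition (1) as the remaining content of the tangential equation. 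The $(X,\xi)$-component of the Einstein equation, since $\mathrm{Hess}\,w$ has no mixed term, reduces to $\mathrm{Ric}^M(X,\xi)=0$, which by Lemma \ref{lem:RicciGKHK} is $\mathrm{div}(S)=0$, giving (2). Finally (3), $\tr(S^2) = -\lambda\tr(S)$, follows because $\mathrm{scal}^M = \mathrm{scal}^N - \alpha^2(\tr S)^2 - \alpha^2\tr(S^2)$ is constant on the homogeneous space $M$ while $\mathrm{scal}^M$ and $\mathrm{scal}^N$ are individually constant, combined with the fact (from the Bakry--Émery / $(\lambda,n+m)$-Einstein identities, cf. the constant-scalar-curvature results of \cite{HPWconstantscal}) that these constants satisfy the trace of condition (1) with the chosen $\alpha$; tracing (1) and using $\tr(S)>0$, $\lambda<0$ then isolates (3).

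I expect the main obstacle to be the first step: carefully extracting the cohomogeneity-one radial structure $M=\mathbb{R}\times H/G_x$ and verifying that $r$ is globally a smooth distance function with $H$ acting transitively on its level sets, so that the hypersurface $N$ and the semi-direct product $G=H\ltimes\mathbb{R}$ genuinely exist. This is where one must invoke Lemma \ref{lem:basehomogeneous}, Proposition \ref{prop:homogeneousB}, Theorem \ref{thm:isometryMBF} and Remark \ref{rem:homogeneous} to rule out critical points of $w$ and to handle the several cases ($M$ a base manifold; $M$ a product with an Einstein or constant-curvature factor; the parabolic case $\mu(w)=0$). Once the radial structure is established, identifying $\ad_\xi$ with a derivation and reading off the equations via Lemma \ref{lem:RicciGKHK} is essentially the reverse of the proof of Theorem \ref{thm:WPE1extension} and should be routine.
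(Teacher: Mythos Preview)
Your overall architecture matches the paper: use Lemma~\ref{lem:basehomogeneous}/Proposition~\ref{prop:homogeneousB}/Remark~\ref{rem:homogeneous} to obtain the cohomogeneity-one radial structure $g=dr^2+g_r$, $w=e^{Lr}$, identify the normal subgroup $H$ and the derivation $D=\alpha^{-1}\ad_\xi$, and then read conditions (1)--(3) off from Lemma~\ref{lem:RicciGKHK} and the $(\lambda,n+m)$-Einstein equation. That part is fine, and your identification of the main obstacle (carefully producing $N$ and $G=H\ltimes\Real$) is accurate.

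There is, however, a genuine gap in how you extract condition~(3) and the values of $\alpha,L$. You write that ``the $(\xi,\xi)$-component gives the necessary relation $\alpha^2\tr(S)=1+\tfrac{m}{\lambda}L^2$'', but Lemma~\ref{lem:RicciGKHK} gives $\Ric(\xi,\xi)=-\alpha^2\tr(S^2)$, so the $(\xi,\xi)$-equation is
\[
-\alpha^2\tr(S^2)-mL^2=\lambda,
\]
which involves $\tr(S^2)$, not $\tr(S)$. The simplification to $\tr(S)$ in the proof of Theorem~\ref{thm:WPE1extension} used the \emph{hypothesis} $\tr(S^2)=-\lambda\tr(S)$ there; you cannot invoke it here, since that is precisely condition~(3) you are trying to prove. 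Likewise, your ``trace of the tangential component yields $mL(\alpha-L/\lambda)=0$'' imports both (3) and the relation $\scal^N=\lambda(n-1)+\tr S$ (i.e., the trace of condition~(1)) from the forward direction. As written, the argument is circular: you use (1) and (3) to pin down $\alpha,L$, and then derive (1) and (3) from the equations with those $\alpha,L$.

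The paper resolves this differently. It \emph{defines} $\alpha=L/\lambda$ (a normalization, since $D$ is only determined up to scale) and then proves $\tr(S^2)=-\lambda\tr(S)$ by an independent computation: it uses the identity for $\nabla_{\nabla w}\Ric^M$ from \cite[Proposition~3.7]{HPWconstantscal} and the vanishing of the trace of the $Q$-tensor term to obtain $mL\,\tr(T^2)+(\lambda-\rho)\tr T=0$, which together with $L^2=(\rho-\lambda)/m$ (from $\mu(w)=0$, equation~(\ref{eqn:L})) yields $\tr(T^2)=L\tr T$, i.e., condition~(3). Only \emph{after} (3) is established does the $(\xi,\xi)$-equation give $\alpha^2(\tr S-m\lambda)=1$, and the tangential equation then yields (1). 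You need some such independent route to~(3); your scalar-curvature sketch does not supply one, since both $\scal^M$ and $\scal^N$ are constants and their relation alone is one equation in the two unknowns $\tr S$ and $\tr(S^2)$. Finally, the case $\dim W=k+1\ge2$ is handled in the paper by first applying the base-manifold argument to $B$ and then explicitly extending the derivation to $\frakh=\frakh_1\oplus\Real^k$ by $D|_{\Real^k}=-\lambda I$; this step requires a short verification and is not quite ``the same essential point'' as the base case.
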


\begin{proof}
Let $G$ be a connected Lie group that acts transitively on $M$ by isometries. Fix a point $x \in M$ and let $K = G_x$ be the isotropy subgroup at $x$. The Lie algebras of $G$ and $K$ are denoted by $\frakg$ and $\frakk$ respectively. Let $\frakq$ be the orthogonal complement of $\frakk$ in $\frakg$. The tangent space $T_x M$ is identified with $\frakq$. In the following we separate our argument into two different cases.

\smallskip

\textsc{Case I}. We assume that $\dim W(M, g) =1$, i.e., $(M,g)$ is a base manifold. From Lemma \ref{lem:basehomogeneous} we may assume that $w(r) = e^{Lr}$ for some constant $L$. From the proof of Lemma \ref{lem:basehomogeneous} there is a codimension one normal subgroup $H\subset G$ containing $K$ that acts transitively on the hypersurface $N^{n-1} = G.x$. Let $\frakh$ be the Lie algebra of $H$ and $\xi \in \frakg$ corresponds to $\nabla r$ at the point $x$. Let $\frakp\subset \frakq$ be the orthogonal complement of $\xi$, then we have
\[
\frakg = \Real\xi \oplus \frakp \oplus \frakk, \qquad \frakh = \frakp\oplus \frakk.
\]
Since $H \subset G$ is a normal subgroup, we have $[\xi, X]\in \frakh$ for any $X\in \frakh$. It follows that $[\xi, \cdot]$ defines a derivation on $\frakh$ and thus $M$ is a one-dimensional extension of $N$. Following the construction in Theorem \ref{thm:WPE1extension} we define
\begin{equation}\label{eqnDXadxi}
D(X) =  \frac{1}{\alpha} \ad_\xi (X) \quad \text{for all } X \in \frakh
\end{equation}
where $\alpha =  L/ \lambda$. In the following we show that $(N, h)$ with the derivation $D$ satisfies the properties stated in the theorem.

From Proposition \ref{prop:1extensionST}, the shape operator of $N \subset M$ at $x$ is given by $T = - \alpha S$ where $S$ is the symmetrization of $D$ in equation (\ref{eqn:symmD}).

\begin{claim}
We have $\tr(S^2) = - \lambda \tr(S)$.
\end{claim}
In fact since $w = e^{Lr}$ for any $X\in T_x N$ we have
\[
\Hess w(X, X) = L w h(T(X), X).
\]
From the equation $\Ric^M - \frac{m}{w}\Hess w = \lambda g$ we have
\[
T(X) = \frac{1}{mL}\left(\Ric^M(X) - \lambda X\right).
\]
It follows that
\begin{eqnarray*}
\left(\nabla_\xi T\right)(X) & = & \frac{1}{m L}\left(\nabla_\xi \Ric^M\right)(X) = \frac{1}{m L}\left(\nabla_{\frac{\nabla w}{L w}}\Ric^M \right)(X) \\
& = & \frac{1}{m L^2 w}\left(\nabla_{\nabla w}\Ric^M\right)(X).
\end{eqnarray*}
On the other hand, from \cite[Proposition 3.7]{HPWconstantscal} we have
\[
\left(\nabla_{\nabla w}\Ric^M\right)(X, Y) = \frac{w}{m}g\left(\left(\Ric^M - \rho I\right)(X), \left(\Ric^M - \lambda I\right)(Y)\right) + \frac{m}{w}Q(\nabla w, X, Y, \nabla w),
\]
where $Q$ is the $(0,4)$-tensor defined at the beginning in \cite[Section 3]{HPWconstantscal} and
\[
\rho = \frac{(n-1)\lambda - \mathrm{scal}}{m-1}.
\]
In terms of $T$, the above equation can be written as
\[
m L \left(\nabla_{\nabla w} T\right)(X, Y) = \frac{w}{m}g\left(mL T(X) + (\lambda -\rho) X, mL T(Y)\right) + \frac{m}{w}Q(\nabla w, X, Y, \nabla w).
\]
Let $\set{X_i}_{i=1}^{n-1}$ be an orthonormal basis of $T_x N$ and then we have
\[
mL\sum_{i=1}^{n-1}\left(\nabla_{\nabla w} T\right)(X_i, X_i) = w m L^2 \tr(T^2) + w L (\lambda-\rho)\tr(T).
\]
The trace of the term that involves $Q$ vanishes by using equation (3.1) and Proposition 3.3 in \cite{HPWconstantscal}. From Proposition \ref{prop:1extensionST}, since $\nabla_\xi T = \alpha^2 [S,A]$, the left hand side of the equation above is zero and we have
\[
m L \tr(T^2) + (\lambda -\rho)\tr(T) = 0.
\]
Note that $k = 0$ in equation (\ref{eqn:L}) and we have
\[
L^2 = - \frac{\mathrm{scal} - (n-m)\lambda}{m(m-1)} = \frac{\rho - \lambda}{m}.
\]
It follows that $\tr (T^2) = L \tr(T)$. Then the claim follows by $T = - \alpha S$ and $L =\lambda \alpha$.

From the formulas of Ricci curvature in Lemma \ref{lem:RicciGKHK} we have
\begin{eqnarray*}
\left(\Ric- \frac{m}{w}\Hess w\right)(\xi, \xi) & = &  - \alpha^2 \tr(S^2) - mL^2 \\
\left(\Ric- \frac{m}{w}\Hess w\right)(\xi, X) & = & - \alpha \mathrm{div}(S) \\
\left(\Ric- \frac{m}{w}\Hess w\right)(X, X) & = & \Ric^N(X, X) - \left(\alpha^2 \tr(S) - mL \alpha\right)h(S(X), X) \\
& & - \alpha^2 h\left([S,A](X),X \right).
\end{eqnarray*}
The second equation above implies that
\[
\mathrm{div}(S) = 0.
\]
The first equation shows that
\[
- \alpha^2 \tr(S^2) - mL^2 = \lambda
\]
i.e.,
\begin{eqnarray*}
\lambda = \lambda \alpha^2 \tr(S) - mL^2 = \lambda \alpha^2 \tr(S) - m \lambda^2 \alpha^2
\end{eqnarray*}
and so we have
\[
\alpha^2(\tr(S) - m\lambda) = 1.
\]
Plugging it into the third equation above shows that
\begin{eqnarray*}
\lambda I & = & \Ric^N - \left( 1+ m\lambda\alpha^2 - m \lambda \alpha^2\right)S - \alpha^2 [S,A] \\
& = & \Ric^N - S - \frac{1}{\tr(S) - m\lambda}[S,A].
\end{eqnarray*}
It follows that
\[
\Ric^N = \lambda I + S + \frac{1}{\tr(S) - m\lambda}[S, A]
\]
which finishes the proof in this case.

\smallskip

\textsc{Case II.} We assume that $\dim W(M, g)=k+1$ with $k\geq 1$. From Lemma \ref{lem:basehomogeneous} and Proposition \ref{prop:homogeneousB} we know that $M = B^b \times_u \Real^k$ and $B$ is a homogeneous base manifold and $u \in W_{\lambda, b+(k+m)}(B,g_B)$ is positive everywhere with $\mu_B(u) = 0$. From the proof of Theorem \ref{thm:homogeneousE} we may assume that $w = u = e^{Lr}$ for some constant $L$. Let $B = G_1/ K_1$ and $K_1 \subset H_1 =G_1 \cap \mathrm{Iso}(B, g_B)_u$. From Theorem \ref{thm:isometryMBF} the group $G = G_1 \ltimes \Real^k$ acts transitively on $M$ via isometries where $\Real^k$ is the $C$-translation in the Euclidean group, i.e., $A = \mathrm{Id}$ in Theorem \ref{thm:isometryMBF}. The isotropy subgroup is given by $K = K_1 \times \set{0}$. Applying the argument in the previous case to $(B, g_B)$ yields that $B$ is a one-dimensional extension of $N_1^{b-1}= H_1/ K_1$ with a derivation $D \in \Der(\frakh_1)$ and a constant $\alpha$ such that the following equations hold.
\begin{eqnarray}
\tr_{\frakh_1}(S^2) & = & -\lambda \tr_{\frakh_1}S \label{eqn:trS2S} \\
\mathrm{div}_{N_1} S & = & 0 \label{eqn:divS} \\
\Ric^{N_1} & = & \lambda I + S + \frac{1}{\tr_{\frakh_1}S - (k+m)\lambda}[S, A]. \label{eqn:RicN1SA}
\end{eqnarray}

Let $H = H_1 \times \Real^k$ such that $N = H/K = N_1\times \Real^k$ with the product metric is the zero-level set of $w$ in $M$. It follows that $\frakh = \frakh_1 \oplus \Real^k$. We extend $D$ to $\frakh$ by letting $D(U) = - \lambda U$ for any $U \in \Real^k$ and the Lie bracket is extended by $[\xi, U] = - L U$ and $[X, U] = 0$ for any $X \in \frakh_1$. It is easy to verify that $D$ is a derivation of $\frakh$. Moreover we have $\tr S = \tr_{\frakh_1} S - k \lambda$ on $\frakh$ and so equation (\ref{eqn:RicN1SA}) can be written as
\[
\Ric^N|_{N_1} = \lambda I + S + \frac{1}{\tr(S) - m \lambda}[S,A].
\]
Since $S = - \lambda I$ and $A = 0$ on $\Real^k$, the right hand side of above equation is zero which is equal to $\Ric^N$ on the $\Real^k$ factor. This shows property (1) in the theorem. Property (3) follows from equation (\ref{eqn:trS2S}) and the extension of $D$ to $\frakh$. Finally property (2) follows from equation (\ref{eqn:divS}), the product structure of $N$ and the fact that $S = - \lambda I$ on the $\Real^k$ factor.
\end{proof}

We can now prove the converse statement of Corollary \ref{cor:SolitonQuasiEinstein}.

\begin{thm}\label{thm:WPEextDnormal}
A non-flat, non-trivial semi-algebraic Ricci soliton on a homogeneous space admits a $(\lambda,n+m)$-Einstein one-dimensional extension with 
\[
\ad_\xi (X) = \alpha D(X), \qquad \text{for all } X \in \frakh
\]
if and only if $D$ is normal.
\end{thm}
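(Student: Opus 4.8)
<br>

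The plan is to prove Theorem \ref{thm:WPEextDnormal} by combining the construction of Corollary \ref{cor:SolitonQuasiEinstein} (for the ``if'' direction) with the structure theorem, Theorem \ref{thm:structureWPEextension} (for the ``only if'' direction). Let $(N^{n-1},h)$ be a non-flat, non-trivial semi-algebraic Ricci soliton, so that $\Ric^N = \lambda I + S$ for some $\lambda \in \Real$, where $S$ is the symmetrization of $\mathrm{pr}\circ D$. As in the proof of Theorem \ref{thm:EinsteinExtDnormal}, exponentiating $S$ to a Lie derivative $\frac12 \mathscr{L}_Y h$ and using the standard soliton identities gives $\mathrm{div}(S) = 0$ and $\tr(S^2) = -\lambda\tr(S)$; moreover, since $N$ is non-flat, $\mathrm{scal}^N = \lambda(n-1) + \tr(S) \neq 0$ and so $\tr(S) \neq 0$. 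One should also observe at the outset that $\lambda < 0$: a non-trivial homogeneous Ricci soliton is expanding, or alternatively this will be forced below since $\alpha^2 = 1/(\tr(S)-\lambda m)$ must be positive.

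For the forward direction, suppose $D$ is normal, i.e. $[S,A] = 0$. Then conditions (1), (2), (3) of Theorem \ref{thm:WPE1extension} are satisfied (condition (1) becomes simply $\Ric^N = \lambda I + S$ since the $[S,A]$ term vanishes), so the one-dimensional extension of $(N,h)$ by $D$ with $\alpha^2 = 1/(\tr(S)-\lambda m)$ is a $(\lambda, n+m)$-Einstein manifold with warping function $w = e^{Lr}$, $L = \lambda\alpha$. This is exactly Corollary \ref{cor:SolitonQuasiEinstein}, so nothing new is needed here beyond citing it.

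For the converse, suppose the one-dimensional extension $(M,g)$ of $(N,h)$ by $\ad_\xi = \alpha D$ is $(\lambda', n+m)$-Einstein for some constant $\lambda'$. I would first handle the possibility $\lambda' \neq \lambda$ by an argument paralleling the end of the proof of Theorem \ref{thm:EinsteinExtDnormal}: one needs $w$ non-constant (so $M$ is not Einstein, using non-triviality of the soliton), and then a trace computation from Lemma \ref{lem:RicciGKHK} together with the Hessian formula $\Hess\, w = L^2 w\, dr\otimes dr + Lw\, g(T(\cdot),\cdot)$ forces a linear relation among $\tr(S)$, $\lambda$, $\lambda'$, $L$, $\alpha$ and $n$; combined with $\tr(S^2) = -\lambda\tr(S)$ this would give $\tr(S) = -\lambda(n-1)$, hence $\mathrm{scal}^N = 0$, hence (homogeneous Ricci flat is flat) $N$ flat, a contradiction. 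So $\lambda' = \lambda$. Now $M$ is a non-trivial homogeneous $(\lambda, n+m)$-Einstein space which is not Einstein, so Theorem \ref{thm:structureWPEextension} applies: $M$ is a one-dimensional extension of some homogeneous $N'$ with a derivation $D'$ satisfying conditions (1)--(3) of that theorem, and with $\alpha'^2 = 1/(\tr(S')-\lambda m)$ and $w = e^{L'r}$, $L' = \lambda\alpha'$. The remaining point is to identify $(N', D')$ with $(N, D)$: since the hypersurface $N' = \{w = 0\text{-level}\}$ inside $M$ is the same codimension-one $H$-orbit as $N$ (the unit normal $\nabla r = \xi$ is the same), the shape operators agree, so $\alpha S = \alpha' S'$ as tensors on this hypersurface; matching traces with $\tr(S') = \tr(S)$ gives $\alpha = \alpha'$ and $S = S'$, and then $D$ and $D'$ agree modulo anti-symmetric parts that do not affect $S$. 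Plugging $S = S'$ and $\alpha^2 = 1/(\tr(S)-\lambda m)$ into the formula $\Ric^M(X,X) = \Ric^N(X,X) - (\alpha^2\tr(S))h(S(X),X) - \alpha^2 h([S,A](X),X)$ from Lemma \ref{lem:RicciGKHK} and comparing with the $(\lambda, n+m)$-Einstein equation (using the already-derived $\alpha^2(\tr(S)-\lambda m) = 1$, which kills the coefficient of the $S$ term exactly as in Theorem \ref{thm:WPE1extension}) forces $\alpha^2 h([S,A](X),X) = 0$ for all $X$, i.e. the symmetric part of $[S,A]$ vanishes; but $[S,A]$ is symmetric (bracket of symmetric and anti-symmetric is symmetric), so $[S,A] = 0$, i.e. $D$ is normal.

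The main obstacle I anticipate is the bookkeeping in the converse direction: Theorem \ref{thm:structureWPEextension} produces \emph{some} one-dimensional extension structure on $M$, and one must argue it is the \emph{given} one — i.e. that the distinguished hypersurface, the normal field $\xi$, the constant $\alpha$, and the tensor $S$ coming from the structure theorem coincide with the data built into the hypothesis. This hinges on the fact that $\xi = \nabla r$ is determined by $M$ up to the $H$-action (the level sets of $w$ are intrinsic), so the shape operator and hence $S$ are intrinsic, after which the normalization $\alpha^2(\tr(S)-\lambda m)=1$ pins down $\alpha$. Once that identification is in place, the vanishing of $[S,A]$ drops out of Lemma \ref{lem:RicciGKHK} just as in the Einstein case of Theorem \ref{thm:EinsteinExtDnormal}, so no genuinely new computation is required.
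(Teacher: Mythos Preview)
Your approach is essentially the paper's: the forward direction is exactly Corollary~\ref{cor:SolitonQuasiEinstein}, and for the converse you invoke Theorem~\ref{thm:structureWPEextension} and compare with the soliton equation $\Ric^N=\lambda I+S$ to force $[S,A]=0$. Your concern about identifying the structure-theorem data $(N',D',\alpha')$ with the given $(N,D,\alpha)$ is legitimate---the paper glosses over it---but the resolution is simpler than you sketch: the hypersurface and $\ad_\xi$ are intrinsic to $M$, so $D'$ is a scalar multiple of $D$, and tracing the equality $\Ric^N=\lambda I+S=\lambda I+S'+\alpha'^2[S',A']$ immediately forces the scalar to be $1$; there is no need to route through Lemma~\ref{lem:RicciGKHK} again.

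The one genuine weak spot is your handling of the case $\lambda'\neq\lambda$. You propose to mimic the trace argument from the Einstein case (Theorem~\ref{thm:EinsteinExtDnormal}) to reach $\tr(S)=-\lambda(n-1)$ and hence $\scal^N=0$, but in the $(\lambda',n+m)$-Einstein setting the extra parameters $L$ and $m$ enter the trace identity and that conclusion does not drop out as stated. The paper's argument is both different and cleaner: Theorem~\ref{thm:structureWPEextension} (condition~(3)) applied with constant $c=\lambda'$ gives $\tr(S^2)=-c\,\tr(S)$, while the soliton identity gives $\tr(S^2)=-\lambda\,\tr(S)$; subtracting yields $(c-\lambda)\tr(S)=0$, hence $\tr(S)=0$, hence $\tr(S^2)=0$, hence $S=0$, contradicting non-triviality. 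You should replace your sketched trace argument with this.
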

\begin{proof}
Let $(N^{n-1}, h)$ be a semi-algebraic Ricci soliton
\[
\Ric = \lambda I + S
\]
for some constant $\lambda < 0$. As in the proof of Theorem \ref{thm:EinsteinExtDnormal}, we have 
\[
\mathrm{div} S = 0,\quad \tr(S^2) = - \lambda \tr(S) \quad \text{and}\quad \abs{\Ric}^2 = \lambda \scal.
\]
If $D$ is normal, then $[S,A] = 0$ and from Theorem \ref{thm:WPE1extension} there is a one-dimensional extension of $(N, h)$ which is $(\lambda, n+m)$-Einstein. 

On the other hand, if $(N, h)$ admits a one-dimensional extension which is $(\lambda, n+m)$-Einstein, then from Theorem \ref{thm:structureWPEextension} on $N$ we have
\[
\Ric = \lambda I + S + \frac{1}{\tr(S) - \lambda m}[S, A].
\]
Comparing with the semi-algebraic Ricci soliton equation yields 
\[
[S, A] = 0,
\]
i.e., $D$ is normal. 

To finish the proof we show that if the extension is $(c, n+m)$-Einstein with $c\ne \lambda$, then $N$ is flat. From Theorem \ref{thm:structureWPEextension} again we have $\tr(S^2) = - c \tr (S)$ and thus we have $(c-\lambda)\tr(S) = 0$. It follows that $\tr(S) = 0$ and then $\tr(S^2) = 0$, i.e., $S = 0$ which shows that $N$ is a trivial semi-algebraic Ricci soliton, a contradiction. 
\end{proof}

As a consequence of Theorem \ref{thm:structureWPEextension} we also have the following formula for the covariant derivative of the Ricci tensor in the direction of $\nabla w$. 

\begin{prop} \label{prop:RadialRic} If $(M^n,g)$ is a homogeneous $(\lambda, n+m)$-Einstein metric, then 
\[ 
\nabla_{\nabla w} \mathrm{Ric} = (m w L \alpha^2)[S,A]. 
\]
\end{prop}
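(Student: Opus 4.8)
The plan is to compute $\nabla_{\nabla w}\mathrm{Ric}$ directly from the structure theorem just established. By Theorem~\ref{thm:structureWPEextension}, $M$ is the one-dimensional extension of a homogeneous space $N$ by a derivation $D$ with $\alpha^2 = 1/(\tr(S)-\lambda m)$, and the warping function is $w(r) = e^{Lr}$ with $L = \lambda\alpha$ and $\nabla w = L w\,\xi$, where $\xi$ is the unit normal to the hypersurfaces $r = \text{const}$. Hence $\nabla_{\nabla w}\mathrm{Ric} = L w\,\nabla_\xi \mathrm{Ric}$, so it suffices to understand the directional derivative of the Ricci tensor of $M$ in the $\xi$-direction.

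The key step is to differentiate the Ricci formulas of Lemma~\ref{lem:RicciGKHK} along $\xi$. First I would note that since $M$ is homogeneous and $\xi$ is invariant under the group $H$ (as remarked after Proposition~\ref{prop:1extensionST}), the tensors $S$, $A$, $[S,A]$ and $\tr(S)$, $\tr(S^2)$ are all $H$-invariant, and in particular constant along the $H$-orbit $N$. The only nontrivial derivative is $\nabla_\xi T = \alpha^2[S,A]$ from Proposition~\ref{prop:1extensionST}, equivalently $\nabla_\xi S = \alpha[S,A]$ under $T = -\alpha S$. Now differentiating the block $\mathrm{Ric}(X,X) = \mathrm{Ric}^N(X,X) - (\alpha^2\tr S)\,h(S(X),X) - \alpha^2 h([S,A](X),X)$ in the $\xi$-direction: the term $\mathrm{Ric}^N$ is $H$-invariant hence $\xi$-constant; the term $[S,A]$, being $H$-invariant and built from $\xi$-constant data, also has vanishing $\xi$-derivative (one checks $\nabla_\xi[S,A] = [\nabla_\xi S, A] + [S, \nabla_\xi A] = \alpha[[S,A],A] + \alpha[S,[S,A]]$, and since the final answer is $(c,n+m)$-Einstein one expects this to be controlled — but more cleanly, we use that $\mathrm{Ric}^M - \lambda g = \frac{m}{w}\mathrm{Hess}\,w$ and $\mathrm{Hess}\,w$ is explicit). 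The surviving contribution comes from differentiating $h(S(X),X)$, which gives $\alpha^2(\tr S)\,h(\nabla_\xi S(X),X) = \alpha^3(\tr S)\,h([S,A](X),X)$, together with the $\alpha^2$ coefficient in front of the $[S,A]$-block; comparing carefully with $L = \lambda\alpha$ and $\alpha^2(\tr S - \lambda m) = 1$ should collapse the constants to $m w L\alpha^2$ times $[S,A]$, while the $(X,\xi)$ and $(\xi,\xi)$ components vanish because $\mathrm{div}(S) = 0$, $\tr(S^2)$, $\tr(S)$ are constant, and $h([S,A]\xi,\cdot) = 0$.

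Actually the cleanest route avoids the messy direct computation: I would instead use the formula already quoted in the proof of Theorem~\ref{thm:structureWPEextension} from \cite[Proposition 3.7]{HPWconstantscal}, namely
\[
\left(\nabla_{\nabla w}\mathrm{Ric}\right)(X,Y) = \frac{w}{m}g\!\left(\left(\mathrm{Ric} - \rho I\right)(X), \left(\mathrm{Ric} - \lambda I\right)(Y)\right) + \frac{m}{w}Q(\nabla w, X, Y, \nabla w),
\]
valid for any $(\lambda,n+m)$-Einstein metric with constant scalar curvature (which a homogeneous one has). Then I would plug in $\mathrm{Ric} - \lambda I = \frac{m}{w}\mathrm{Hess}\,w = mL\,dr\otimes dr\cdot L + mL\,g(T(\cdot),\cdot)$ — more precisely using $T = -\alpha S$ — and evaluate on the horizontal and vertical blocks, using that the $Q$-term was already shown to be traceless and, restricted to appropriate directions, expressible through $T$. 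The point is that $\mathrm{Ric} - \lambda I$ is, on $N = \{w = 1\}$, essentially $mL(-\alpha S) = -m\lambda\alpha^2 S$ on the tangent block and $mL^2$ on $\xi$, while $\mathrm{Ric} - \rho I$ differs from this by a multiple of the identity plus the $[S,A]$ term from condition~(1); the difference of these two quadratic expressions in $S$ leaves exactly a commutator $[S,A]$, and tracking the scalar factors $L = \lambda\alpha$, $\alpha^2 = 1/(\tr S - \lambda m)$, $\rho - \lambda = mL^2$ produces the stated coefficient $m w L\alpha^2$.

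The main obstacle is bookkeeping the constants: one must carefully keep straight the various scalars ($L$, $\alpha$, $\lambda$, $\rho$, $\mathrm{scal}$, $\tr S$) and the fact that formulas hold "at $x$" on $N$ versus "at a general point" where $w \neq 1$, using $H$-invariance and the explicit $r$-dependence $w = e^{Lr}$ to propagate. I expect the symmetric part and the identity-proportional parts to cancel precisely because $N$ satisfies condition~(1) of Theorem~\ref{thm:structureWPEextension}, leaving only the $[S,A]$ commutator; verifying that cancellation and nailing the coefficient is the one genuinely computational step, but it is routine given the relations $\alpha^2(\tr S - \lambda m) = 1$ and $L = \lambda\alpha$ already in hand.
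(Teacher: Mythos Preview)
Your proposal circles the simplest argument without landing on it. The paper's proof is precisely the one-line computation you mention parenthetically and then abandon: from $\mathrm{Ric} = \lambda g + \dfrac{m}{w}\mathrm{Hess}\,w$ one gets
\[
\nabla_{\nabla w}\mathrm{Ric} \;=\; m\,\nabla_{\nabla w}\!\left(\tfrac{1}{w}\mathrm{Hess}\,w\right),
\]
and since $\tfrac{1}{w}\mathrm{Hess}\,w = L^2\,dr\otimes dr + L\,g(T(\cdot),\cdot)$, differentiating along $\nabla w = Lw\,\nabla r$ yields $mLw\,\nabla_{\nabla r}T = mLw\,\alpha^2[S,A]$ directly from Proposition~\ref{prop:1extensionST}, using only that $\nabla_{\nabla r}(dr\otimes dr)=0$ for a distance function. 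No constant-tracking or cancellations are required.

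Your first route (differentiating the formulas of Lemma~\ref{lem:RicciGKHK} term by term) has a genuine gap: the claim that $\mathrm{Ric}^N$ is ``$\xi$-constant'' because it is $H$-invariant conflates invariance under the $H$-action (i.e.\ constancy along the leaves $N_c$) with vanishing \emph{covariant} derivative in the transverse direction. These are different notions: $T$ is also $H$-invariant, yet $\nabla_\xi T = \alpha^2[S,A]\ne 0$ in general. To repair this you would have to compute $\nabla_\xi \mathrm{Ric}^{N_c}$ independently, which is no easier than the original question, and you would similarly need $\nabla_\xi A$ and $\nabla_\xi[S,A]$ rather than assuming they vanish. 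Your second route via the $Q$-tensor identity from \cite{HPWconstantscal} could in principle be pushed through, but note that in the proof of Theorem~\ref{thm:structureWPEextension} only the \emph{trace} of the $Q$-term was shown to vanish; here you would need the full tensor $Q(\nabla w,X,Y,\nabla w)$ on each pair $(X,Y)$, which is exactly the bookkeeping you identify as the main obstacle. The paper's route sidesteps all of this.
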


\begin{proof}
By Theorem \ref{thm:structureWPEextension}  $M$ is a one-dimensional extension of a homogeneous space $N$ and
\[ w= e^{Lr}, \]
where $r$ is the distance to $N$. This tells us that
\[  \frac{1}{w} \mathrm{Hess} w = L^2 dr \otimes dr + L g(T(\cdot), \cdot). \]
Taking the covariant derivative of both sides of the equation above gives us,
\begin{eqnarray*}
&\nabla_{\nabla w}  \left( \dfrac{1}{w}\mathrm{Hess} w  \right) = L e^{Lr} g\left( \left(\nabla_{\nabla r} T\right) (\cdot), \cdot \right), &
\end{eqnarray*}
where we have used that $\nabla_{\nabla r} \left( dr \otimes dr \right) = 0$, which follows from a simple calculation since $r$ is a distance function. 

Then by differentiating the $(\lambda, n+m)$-Einstein equation we have that 
\begin{eqnarray*}
\nabla_{\nabla w} \left(\mathrm{Ric}  \right)  &=& m \nabla_{\nabla w}  \left( \frac{1}{w}\mathrm{Hess} {w} \right) \\
&=& mLw \left(\nabla_{\nabla r} T\right)\\
&=& mLw \alpha^2 [S,A]. 
\end{eqnarray*}
In the last line we have used Proposition \ref{prop:1extensionST}.
\end{proof}

This result, when combined with our earlier results, gives us the following results mentioned in the introduction. 

\begin{proof} [Proof of Theorem \ref{thm:WPEradial}]
From  Proposition \ref{prop:RadialRic},  $\nabla_{\nabla w} \mathrm{Ric}=0$ if and only if  $[A,S]=0$.  When $[A,S]=0$, part  (1) of Theorem \ref{thm:structureWPEextension} shows that  $M$ is the one-dimensional extension of a normal semi-algebraic Ricci soliton. 
\end{proof}

\begin{proof}[Proof of Corollary \ref{cor:WPEradial}]
If $w$ is constant, then the statement is trivial.  If $w$ is non-constant, then we have that 
\[ g_E = g_M +  w^2 g_{\mathbb{R}^m}\]
where $w>0$.  Moreover, $g_M$ is a one-dimensional extension of $(N,g_N)$,  a normal semi-algebraic Ricci soliton.  

By Theorem \ref{thm:EinsteinConstruction}, $N$ also  admits a different one-dimensional extension which is $\lambda$-Einstein.  The underlying manifold of this Einstein metric is also $M= N \times \mathbb{R}$.  Denote this metric by $\widetilde{g}_M$. Then the metric 
\[ \widetilde{g}_E =  \widetilde{g}_M + g_{H^m} \]
is  a metric on $E$ where $g_{H^m}$ denotes the $m$-dimensional hyperbolic space with Ricci curvature $\lambda$.  
\end{proof}

We end this section with the proof of  Corollary \ref{cor:SARiccisiltonDnormal} in Introduction. 

\begin{proof}[Proof of Corollary \ref{cor:SARiccisiltonDnormal}]
The case when $m=0$ follows from Theorem \ref{thm:EinsteinExtDnormal}. When $m\geq 1$, from Theorem \ref{thm:WPEextDnormal} we know that $N^{n-1}$ admits a homogeneous one-dimensional extension $M^n$ which is $(\lambda, n+m)$-Einstein. From Theorem \ref{thm:homogeneousE} the warped product of $M$ with a space form fiber $F^{m}$ is both homogeneous and $\lambda$-Einstein. 
\end{proof}

\medskip
\section{Left invariant metrics on Lie groups and algebraic Ricci solitons}

In this section we specialize to Lie groups with left invariant metrics. In the first subsection we discuss general results about how  $W(G,g)$  interacts with the Lie group structure of $G$. In the second subsection we consider simply connected solvable Lie groups with left invariant metrics and give a classification of such groups that have $W(G,g) \neq \{ 0 \}$.

\subsection{Left invariant metrics on  Lie groups with  $\mathbf{W \neq 0}$.} Let $(G, g)$ be an $n$-dimensional Lie group with left invariant metric such that $W_{\lambda, n+m}(G, g) \ne \set{0}$. Combining Lemma \ref{lem:basehomogeneous} and Proposition \ref{prop:homogeneousB}, we have two cases.  Either
\begin{enumerate}
\item $(G,g)$ is isometric to a Riemannian product $ B \times F^k$   where $B$ is a Lie group with left invariant $\lambda$-Einstein metric and $F$ is a simply connected space form, or
\item
\[ g = g_B + e^{Lr} g_{\Real^k} \]
 where $L$ is a constant,  $B$ is a base manifold,  $r : B \To \Real$ is a smooth distance function and
\[ w = e^{Lr} \in W_{\lambda, n+m}(G,g)\]
where $\lambda<0$.
\end{enumerate}
Note that, in either case, $k$ could be zero. In case (1) we call the metric $(G,g)$ \emph{rigid} and, in  case (2) we call the metric \emph{non-rigid}.

In the non-rigid case  we can always re-parametrize the distance function $r$ so that $r(e)=1$, where $e$ is the identity element of $G$.  We call such a re-parametrized distance function \emph{normalized}.   We will usually assume the distance function is normalized below.

The study of $W(G,g)$ in the rigid case reduces to studying the  solutions on space forms discussed in \cite[Section 2]{HPWuniqueness} as $W$ will consist of  the pullbacks of functions $v \in W_{\lambda, k+m}(F, g_F)$.  In the second case we have the following more interesting interaction between properties of the function $u$ and the Lie group structure.

\begin{prop} \label{prop:H} 
Let $(G,g)$ be a non-rigid  Lie group with left invariant metric  and let   $w \in W_{\lambda, n+m}(G,g)$ where
\[ w = e^{Lr}, \]
$L$ is a constant, and $r$ is a normalized distance function. Then  $r$ is the signed distance to a codimension one normal subgroup $H$ and the vector field $\xi = \nabla r $ is a left invariant vector field.  In particular,
\begin{equation}
\label{eqn:xiH}  [\xi,  \mathfrak{h}] \subset \mathfrak{h}
\end{equation}
where $\mathfrak{h}$ is the Lie algebra of $H$.
\end{prop}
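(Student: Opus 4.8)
The plan is to extract the subgroup $H$ directly from the warping function $w$, exploiting that $w$ (equivalently $r$) transforms multiplicatively under isometries. First I would invoke Theorem~\ref{thm:isometryMBF} (or the more primitive \cite[Proposition 5.2]{HPWuniqueness}) to conclude that for every $\sigma \in \mathrm{Iso}(G,g)$ we have $w \circ \sigma^{-1} = C_\sigma w$ for a positive constant $C_\sigma$; since left translations $L_a$ are isometries, this gives $r \circ L_a^{-1} = r + c_a$ for a constant $c_a$ (taking logarithms), and the map $a \mapsto c_a$ is a continuous homomorphism $G \to \Real$. Its kernel $H = \{a : c_a = 0\}$ is a closed normal subgroup, and one checks $H = \{ a : L_a \text{ preserves } r\}$, so $H$ is precisely the stabilizer of the level set $r^{-1}(1)$; since $r$ has no critical points (a critical point of $u=e^{Lr}$ would force $u$ constant, contradicting non-rigidity, as in the proof of Lemma~\ref{lem:basehomogeneous}), the level sets are smooth hypersurfaces and $H$ acts transitively on $r^{-1}(1)$, which is the $H$-orbit of $e$. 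Hence $r$ is the signed distance to $H$ after the normalization $r(e)=1$.

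Next I would show $\xi = \nabla r$ is left invariant. Because $r \circ L_a^{-1} = r + c_a$ and $L_a$ is an isometry, $dL_a(\nabla r) = \nabla(r \circ L_a^{-1}) \circ L_a = \nabla r$, i.e. $\xi$ is $L_a$-invariant for all $a \in G$; equivalently $\xi$ is a left invariant vector field, so $\xi \in \frakg$ may be regarded as a Lie algebra element. Finally, since $H$ is a normal subgroup, $\Ad(a)\frakh = \frakh$ for all $a$, hence $[\frakg, \frakh] \subset \frakh$; in particular $[\xi, \frakh] \subset \frakh$, which is \eqref{eqn:xiH}. This last inclusion is exactly the statement that $\ad_\xi$ restricts to a derivation-type map on $\frakh$, setting up the one-dimensional extension picture used in the structure theorems.

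The main obstacle, and the step requiring the most care, is the passage from "isometries scale $w$" to the homomorphism $a \mapsto c_a$ and the identification of its kernel with a genuine Lie subgroup of the \emph{abstract} group $G$ rather than merely a subgroup of the isometry group. One must check that left translations exhaust enough isometries for $c_a$ to be defined for all $a \in G$ (immediate, since $G \subset \mathrm{Iso}(G,g)$ acts on itself by left translation) and that $a \mapsto c_a$ is a homomorphism, which follows from $r \circ L_{ab}^{-1} = r \circ L_b^{-1} \circ L_a^{-1} = (r + c_b) \circ L_a^{-1} = r + c_a + c_b$. The smoothness/closedness of $H$ then follows from continuity of this homomorphism, and normality from the fact that the kernel of a group homomorphism is normal. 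Everything else is a routine translation of the warped-product structure results of Section~\ref{} and Lemma~\ref{lem:basehomogeneous} into the Lie group setting.
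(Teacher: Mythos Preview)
Your proposal is correct and follows the same overall architecture as the paper: identify $H$ as the kernel of the homomorphism $G\to\Real$ induced by the multiplicative scaling $w\circ L_a^{-1}=C_a w$, then verify the remaining claims. There are two small technical differences worth noting.

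First, for the left-invariance of $\xi=\nabla r$, the paper argues via the Riemannian submersion $G\to G/H=\Real$: left-invariant vector fields on the base lift to left-invariant horizontal fields on $G$, and $\nabla r$ is such a lift. Your argument---that $r\circ L_a^{-1}=r+c_a$ forces $dL_a(\nabla r)=\nabla r$ because the additive constant is killed by the gradient---is more direct and avoids invoking submersion theory.

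Second, for the inclusion $[\xi,\frakh]\subset\frakh$, the paper does \emph{not} appeal to normality of $H$ but instead computes directly, using that $\xi$ is left invariant and that $r$ is a distance function, that $g([\xi,X],\xi)=-2g(\nabla_{\nabla r}\nabla r,X)=0$ for $X\in\frakh$. Your route via normality ($\Ad(G)\frakh=\frakh$, hence $[\frakg,\frakh]\subset\frakh$) actually yields the stronger statement that $\frakh$ is an ideal, which is exactly what is used later (e.g.\ in Remark~\ref{rem:solvablility}), so your argument is in that sense slightly more efficient.

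One cosmetic point: the normalization in the paper is stated as $r(e)=1$, but the proof uses $w(e)=1$; your identification $H=\{a:c_a=0\}=r^{-1}(r(e))$ is correct regardless of which convention is in force, so this does not affect the argument.
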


\begin{proof}
Let $H$ be the level hypersurface $w=1$. Since $w(e)=1$, the elements of $H$ are the elements whose left translation preserves $w$.  In particular, $H$ is  a codimension one normal subgroup in $G$. Thus we obtain a Riemannian submersion $G\rightarrow G/H$
which is also a Lie algebra homomorphism. This shows that left invariant
vector fields on $G/H$ lift to left invariant vector fields on $G$
that are perpendicular to $H$. As $G/H=\mathbb{R}$ it follows that
$\nabla r$ is a left invariant vector field on $G.$

For the last part note that
\[ g([\xi, X], \xi) = -2g (\nabla_{\nabla r} \nabla r, X) = 0 \]
for any $X \perp \xi$.
\end{proof}

In the spirit of Theorem \ref{thm:homogeneousE}, we  now address the question of whether it is always possible to build a left invariant Einstein metric from a Lie group with left invariant metric $(G,g)$ with $W(G,g) \neq 0$.

In the rigid case we can see quickly  that this is always true.  Given  $v \in W_{\lambda, k+m}(F, g_F)$, let  $\widetilde{F}$ be the $m$-dimensional simply connected space form with Ricci curvature $\mu_{F}(v)$.  Define
\begin{eqnarray*}
E &=& G \times_{v} \widetilde{F}.
\end{eqnarray*}
Then we have
\begin{eqnarray*}
E &=& B \times (F \times_{v} \widetilde{F}) \\
&=& B \times \hat{F}
\end{eqnarray*}
where $\hat{F}$ is a simply connected space form.  Clearly, $E$ is naturally a Lie group with the product structure coming from $B$ and $\hat{F}$, and  the metric, being a product of left invariant metrics on the factors, is  also left invariant.

In the non-rigid case we also have the following

\begin{thm} \label{thm:LieStruc}
Suppose that $(G,g)$ is a non-rigid  Lie group with left invariant metric   such that
\[
w = e^{Lr} \in W_{\lambda, n+m}(G,g),
\]
where $L$ is a constant, and $r$ is a normalized distance function.  Let $E = G\times_{w}\Real^m$ with metric $g_E = g + w^2 g_0$ where $(\Real^m, g_0)$ is  Euclidean space. Then $E$ is a Lie group and its Lie algebra $\mathfrak{e}$ is the abelian extension of the Lie algebra $\mathfrak{g}$ of $G$ by $\mathfrak{a} = \Real^m$ with
\[
[\xi, U] = - L U, \quad [X_i, U] = 0 \text{ for any } U \in \mathfrak{a} \text{ and }i = 1, \ldots, n-1,
\]
where $\set{\xi}\cup \set{X_i}_{i=1}^{n-1}$ is an orthonormal basis of the metric Lie algebra  $\mathfrak{g}$.
\end{thm}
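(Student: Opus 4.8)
The plan is to build the warped-product metric $g_E = g + w^2 g_0$ on $E = G \times_w \Real^m$ explicitly as a left invariant metric on a Lie group, and then verify it agrees with the given one. First I would introduce the Lie algebra $\mathfrak{e} = \mathfrak{g} \oplus \mathfrak{a}$ with $\mathfrak{a} = \Real^m$, declaring the brackets as stated: $[\xi, U] = -LU$ and $[X_i, U] = 0$, and $[\cdot,\cdot]$ unchanged on $\mathfrak{g}$. One checks this is a genuine Lie algebra — the Jacobi identity only needs to be verified on triples involving at least one element of $\mathfrak{a}$, and since $\mathrm{ad}_\xi$ acts as $-L\,\mathrm{Id}$ on $\mathfrak{a}$ while all other $\mathrm{ad}_{X_i}$ kill $\mathfrak{a}$, this reduces to checking that $U \mapsto -LU$ together with the zero maps form a representation of $\mathfrak{g}$ on $\mathfrak{a}$, i.e. that $[\xi, X_i] \in \mathfrak{h}$ acts as zero on $\mathfrak{a}$; this is exactly the content of Proposition \ref{prop:H} that $[\xi, \mathfrak{h}] \subset \mathfrak{h}$. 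Equivalently, $\mathfrak{e}$ is the abelian extension of $\mathfrak{g}$ by the one-dimensional representation determined by the linear functional dual to $\xi$, scaled by $-L$. Let $E$ be the simply connected Lie group with Lie algebra $\mathfrak{e}$, and equip it with the left invariant metric for which $\{\xi\} \cup \{X_i\} \cup \{U_j\}$ is orthonormal, where $\{U_j\}$ is the standard basis of $\mathfrak{a}$.

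Next I would identify this left invariant metric with the warped product metric $g + w^2 g_0$. The group $E$ contains $G$ as a subgroup (the integral subgroup of $\mathfrak{g}$) and $\mathfrak{a}$ exponentiates to a normal abelian subgroup $A \cong \Real^m$; moreover $E = G \ltimes A$ as a semidirect product. The key computation is to determine the metric induced on the $A$-orbits and how they sit in $E$. Since $r$ is the signed distance function to $H$ with $\nabla r = \xi$ left invariant, the flow of $\xi$ moves between level sets of $r$; conjugation by $\exp(t\xi)$ scales $\mathfrak{a}$ by $e^{-Lt}$ from the relation $\mathrm{ad}_\xi = -L\,\mathrm{Id}$ on $\mathfrak{a}$, so left-translating the orthonormal frame $\{U_j\}$ at the identity to a point at distance $r$ from $H$ produces a frame of length $e^{Lr} = w$. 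This is precisely the warping factor $w^2$ on the $\Real^m$-directions, while the restriction to the $\mathfrak{g}$-directions is unchanged, giving $g_E = g + w^2 g_0$. The orthogonality of the $\mathfrak{g}$- and $\mathfrak{a}$-directions is built into the metric definition and is preserved by left translation.

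The main obstacle I expect is the bookkeeping in the second step: carefully checking that the left invariant metric on $E$ restricts to $g$ on the $G$-factor and genuinely produces the warping function $w = e^{Lr}$ on the fiber, rather than some other function of position. The cleanest route is to use the Riemannian submersion / semidirect-product structure: the projection $E \to E/A \cong G/H = \Real$ realizes $r$ as a coordinate, and one tracks how $\mathrm{Ad}(\exp(t\xi))|_{\mathfrak{a}}$ rescales the fiber metric along this coordinate. One should also confirm that $E$ is simply connected (so that it is the total space $G \times_w \Real^m$ and not a quotient), which follows since $G$ and $\Real^m$ are simply connected and $E$ is their semidirect product. A minor point to handle is that the statement asserts $E = G \times_w \Real^m$ \emph{with metric} $g_E$ is a Lie group with the stated algebra, so after constructing $\mathfrak{e}$ abstractly one must exhibit the diffeomorphism $E \cong G \times \Real^m$ intertwining the metrics, which the semidirect product decomposition supplies directly.
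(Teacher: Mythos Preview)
Your approach is correct and genuinely different from the paper's. The paper works in the opposite direction: it writes down an explicit semidirect-product multiplication on the manifold $G\times\Real^m$, namely $(x,a)\cdot(y,b)=(xy,\,C_{y^{-1}}a+b)$ with $C_x=e^{-Lr(x)}$, and then differentiates the conjugation action $\bar\tau(x)U=C_xU$ along the one-parameter subgroups $\exp(tX_i)$ and $\exp(t\xi)$ to read off the brackets $[X_i,U]=0$ and $[\xi,U]=-LU$. Your route---build $\mathfrak{e}$ abstractly, verify Jacobi via Proposition~\ref{prop:H}, pass to the simply connected group, and then exhibit the semidirect-product diffeomorphism $E\cong G\times\Real^m$---is equally valid and arguably cleaner Lie-theoretically; it also makes transparent exactly where $[\xi,\mathfrak{h}]\subset\mathfrak{h}$ is used. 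The paper's approach buys concreteness: the group law sits directly on the given warped-product manifold without an auxiliary identification.

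One caution on your extra step of matching the left-invariant metric with $g_E=g+w^2g_0$: the sentence ``left-translating the orthonormal frame $\{U_j\}$ \dots\ produces a frame of length $e^{Lr}$'' is not right as written, since left translation of an orthonormal frame in a left-invariant metric is orthonormal by definition. What actually happens is that the \emph{coordinate} frame $\partial/\partial a_j$ on the $\Real^m$ factor differs from the left-invariant frame by the factor $e^{\pm Lr}$ (via $\mathrm{Ad}_g|_{\mathfrak{a}}=e^{-Lr(g)}\mathrm{Id}$), and which sign you get depends on whether you parametrize $E$ as $G\cdot A$ or $A\cdot G$; only one ordering yields $g_E$ on the nose. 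This metric statement is not part of the theorem as stated (the paper's proof does not address it), but it is implicitly needed downstream in Theorem~\ref{thm:Hsolvsoliton}, so if you choose to include it, do the bookkeeping carefully.
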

\begin{proof}
From Remark \ref{rem:homogeneous}, in the non-rigid case we know that, for any $x \in G$ there is a constant,  $C_x$,  such that $u \circ L_{x^{-1}} = C_x u$ where $L_{x^{-1}}$ the left multiplication of $x^{-1}$ on $G$.   $C_x$ induces  an automorphism $\tau(x, \cdot)$ of $(\Real^m, +)$ in the following way
\[
\tau(x, \cdot) : \Real^m \rightarrow \Real^m, \quad a \mapsto C_x a.
\]
For a fixed $x \in G$, the differential $\bar{\tau}(x)$ of $\tau(x)$ is a Lie algebra isomorphism of the abelian Lie algebra $\mathfrak{a}$ which is the tangent space of the Lie group $(\Real^m, +)$ at the origin. In particular $\mathfrak{a}$ is isomorphic to $(\Real^m, +)$ and we have
\[
\bar{\tau}(x) : \mathfrak{a} \rightarrow \mathfrak{a}, \quad U \mapsto C_x U.
\]
From the formula $w(x) = e^{L r(x)}$, we also  have
\[
C_x = \frac{w(e)}{w(x)} = \frac{1}{e^{L r(x)}} = e^{- L r(x)}.
\]
On the total space $E$, a Lie group structure is given by the semidirect product $G\times_{w}\Real^m $ which is the Lie group with $G \times \Real^m$ as its underlying manifold and with multiplication and inversion given by
\begin{eqnarray*}
(x,a) \cdot (y, b) & = & (x\cdot y, C_{y^{-1}}a + b) \\
(x,a)^{-1} & = & (x^{-1}, - C_x a).
\end{eqnarray*}
For the semi-direct product of two general Lie groups, see \cite[Section I.15]{Knapp} for example.

The map $\bar{\tau}$ is a smooth homomorphism of $G$ into $\mathrm{Aut}(\mathfrak{a})$, the automorphisms of $\mathfrak{a}$. The differential $D \bar{\tau}$ is a homomorphism of the Lie algebra $\mathfrak{g}$ of $G$ into $\mathrm{Der}(\mathfrak{a})$, the derivations of $\mathfrak{a}$. The Lie algebra $\mathfrak{e}$ of $E$ is given by the semi-direct product $\mathfrak{g}\oplus_{D\bar{\tau}}\mathfrak{a}$, i.e., the Lie brackets of $\mathfrak{g}$ and $\mathfrak{a}$ are preserved in $\mathfrak{e}$ and, for any $X \in \mathfrak{g}$, $U \in \mathfrak{a}$ we have
\[
[X, U] = \left(D{\bar{\tau}}(X)\right)(U).
\]
In the following we compute the map $D\bar{\tau}$.

Let $\set{X_i}_{i=0}^{n-1}$ be an orthonormal basis of $\mathfrak{g}$ with $X_0 = \xi = \nabla r|_{e}$. For $t \in \Real$ let $x(t) = \exp(tX_i)$. If $i \geq 1$, then $x(t) \in H$ and it follows that $r(x(t)) = 0$. Thus $C_{x(t)} = 1$ and $\bar{\tau}(x(t))$ is the identity map for $t \in \Real$. So its differential is zero, i.e., $[X_i, U] = 0$. Now we are left with $D\bar{\tau}(X_0)$. In this case we have $r(x(t)) = t$ and then
\begin{eqnarray*}
\left(D\bar{\tau}(X_0)\right)(U) = \frac{\df}{\df t}\left(e^{-L t}U\right) = -L U,
\end{eqnarray*}
which shows that $[\xi, U] = -L U$.
\end{proof}

\begin{rem}\label{rem:solvablility}
From the Lie algebra structure of $\frake$ in Theorem \ref{thm:LieStruc}, we have
\[
\frake^1 = [\frake, \frake] = [\frakg, \frakg] \oplus \Real^m = \frakg^1 \oplus \Real^m,
\]
and
\[
\frake^2 = [\frake^1, \frake^1] = [\frakg^1, \frakg^1] = \frakg^2
\] 
as $\frakg^1\subset \frakh$ by Proposition \ref{prop:H}. On the other hand from Proposition \ref{prop:H} again we have the following relation in the commutator Lie algebras 
\[
\frakh^{i+1} \subset \frakg^{i+1} \subset \frakh^{i}
\]
for $i \geq 0$. It follows that $\frake$ is a solvable Lie algebra if and only if $\frakh$ is solvable.  
\end{rem}

\smallskip

\subsection{Non-rigid  solvable Lie groups with $\mathbf{W \neq 0}$. }
From Theorems \ref{thm:WPE1extension} and \ref{thm:WPEextDnormal}, a one-dimensional extension of an algebraic Ricci soliton $(H, h)$ admits a non-rigid $(\lambda, n+m)$-Einstein metric. We show a converse of this construction on solvmanifolds, i.e., any non-rigid $(\lambda, n+m)$-Einstein solvmanifold can be obtained in this way.

A solvmanifold with $W_{\lambda, n+m}(G,g) \neq \{0\}$ which is rigid  is a  product of a $\lambda$-Einstein  solvmanifold and a space form.   Moreover,  $W_{\lambda, n+m}(G,g)$ consists of functions which are pullbacks of solutions  on the space form factor.  Thus, the study of these spaces  reduces to studying  left invariant Einstein metrics on simply connected solvable Lie groups.  There is a rich structure to these spaces, see \cite{Heber},  \cite{LauretStandard} and the references therein.

In the non-rigid case,  the group $G$ shall be identified with its metric Lie algebra $(\mathfrak{g}, \langle\cdot, \cdot\rangle)$ where $\mathfrak{g}$ is the Lie algebra of $G$ and $\langle\cdot, \cdot\rangle$ denotes the inner product on $\mathfrak{g}$ which determines the metric. We consider the orthogonal decomposition
\begin{equation*}
\mathfrak{g} = \mathfrak{a}\oplus \mathfrak{n},
\end{equation*}
where $\mathfrak{n}$ is the nilradical of $\mathfrak{g}$, i.e., the maximal nilpotent ideal.   Assuming that $(G,g)$ is non-rigid,  by Proposition \ref{prop:H},  the zero set of a normalized distance function $H$  is a codimension one normal subgroup.  Let $h$ be the induced metric on $H$.    Then $(H,h)$  is also a solvmanifold since, by equation (\ref{eqn:xiH}), $\xi \in \mathfrak{a}$.  The Lie algebra  of $H$, $\mathfrak{h}$, then  has the following decomposition
\[
\mathfrak{h} = \mathfrak{a}' \oplus \mathfrak{n}
\]
where $\mathfrak{a}'$ is the orthogonal complement of $\Real \xi \subset \mathfrak{a}$.

\begin{thm}\label{thm:Hsolvsoliton}
Suppose that $(G, g)$ is a solvmanifold  with $W_{\lambda, n+m}(G, g) \neq \{ 0 \}$ which is non-rigid. Let $H$ be  the zero set of a normalized distance function with induced metric $h$. Then  $(H, h)$ is an algebraic Ricci soliton with $\mathrm{Ric}^{H} = \lambda I + S$ where $S \in \mathrm{Der}(\mathfrak{h})$ is symmetric and $\mathfrak{h}$ is the Lie algebra of $H$.
\end{thm}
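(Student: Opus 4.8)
The plan is to use Theorem~\ref{thm:structureWPEextension} to present $(G,g)$ as the one-dimensional extension of $(H,h)$, and then to improve the ``corrected'' soliton equation it provides to the genuine algebraic soliton equation by showing a certain commutator vanishes. Since $(G,g)$ is non-rigid, $w=e^{Lr}$ is a nonconstant function in $W_{\lambda,n+m}(G,g)$, so $(G,g)$ is a homogeneous $(\lambda,n+m)$-Einstein manifold which is not $\lambda$-Einstein; Theorem~\ref{thm:structureWPEextension} then presents $G$ as the one-dimensional extension of a homogeneous space by a derivation $D$ with $\alpha^{2}=1/(\tr S-\lambda m)$ and $L=\lambda\alpha$, and by Proposition~\ref{prop:H} that homogeneous space is exactly the codimension-one normal subgroup $H$, with $\xi=\nabla r$ left invariant and $D=\frac{1}{\alpha}\ad_\xi|_{\frakh}\in\Der(\frakh)$. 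Hence $\Ric^{H}=\lambda I+S+\frac{1}{\tr S-\lambda m}[S,A]$, where $S$ and $A$ are the symmetric and skew-symmetric parts of $D$. My first reduction is that it suffices to prove $[S,A]=0$: in that case $\Ric^{H}=\lambda I+S$ and $D$ is a \emph{normal} derivation, so splitting $\frakh\otimes\Cpx$ into the eigenspaces $\frakh_{\mu}$ of $D$ (on which $D^{*}$ acts as $\bar\mu$), the relations $[\frakh_{\mu},\frakh_{\nu}]\subset\frakh_{\mu+\nu}$ and $S|_{\frakh_{\mu}}=(\mathrm{Re}\,\mu)\,I$ together with the additivity of $\mathrm{Re}$ show that $S$ is itself a derivation; it is symmetric by construction, so $(H,h)$ is an algebraic Ricci soliton.

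It remains to prove $[S,A]=0$, and this is the step that genuinely uses solvability. I would pass to the total space $E=G\times_{w}\Real^{m}$. Since $(G,g)$ is non-rigid we have $\mu(w)=0$, so $(E,g_{E})=(G\times\Real^{m},\,g_{G}+w^{2}g_{\Real^{m}})$ is $\lambda$-Einstein, and by Theorem~\ref{thm:LieStruc} and Remark~\ref{rem:solvablility} $E$ is a simply connected solvable Lie group; thus $(E,g_{E})$ is an Einstein solvmanifold. Its metric Lie algebra is $\frake=\Real\xi\oplus\frakh\oplus\Real^{m}$, on which $\ad_\xi$ acts as $\alpha D$ on $\frakh$, as $-L\,I$ on $\Real^{m}$, and as $0$ on $\Real\xi$; a short computation of the adjoint with respect to $g_{E}$ gives $\ad_\xi^{*}=0\oplus\alpha D^{*}\oplus(-L\,I)$, whence $[\ad_\xi,\ad_\xi^{*}]$ equals $-2\alpha^{2}[S,A]$ on $\frakh$ and vanishes on $\Real\xi\oplus\Real^{m}$. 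So $[S,A]=0$ is \emph{equivalent} to $\ad_\xi$ being a normal operator on $\frake$. This I would extract from the structure theory of Einstein solvmanifolds: $(E,g_{E})$ is standard (\cite{Heber}, \cite{LauretStandard}), so $\frake=\fraka_{\frake}\oplus\frakn_{\frake}$ with $\frakn_{\frake}=[\frake,\frake]$ the nilradical and $\fraka_{\frake}=\frakn_{\frake}^{\perp}$ abelian, and the abelian factor acts on the nilradical by normal operators. Because $[\frake,\frake]=\frakh^{1}+D(\frakh)+\Real^{m}\subset\frakh\oplus\Real^{m}=\xi^{\perp}$ (and a non-flat Einstein solvmanifold has derived algebra equal to its nilradical), one gets $\xi\in\fraka_{\frake}$; abelianness of $\fraka_{\frake}$ makes $\ad_\xi$ and $\ad_\xi^{*}$ block-diagonal for $\frake=\fraka_{\frake}\oplus\frakn_{\frake}$ and zero on $\fraka_{\frake}$, so normality of $\ad_\xi|_{\frakn_{\frake}}$ forces $\ad_\xi$ to be normal on all of $\frake$, and $[S,A]=0$.

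The main obstacle is this last step --- the appeal to the fine structure of Einstein solvmanifolds. One must check that the splitting of $\frake$ used is the canonical one and that $\xi$ really lies in the abelian factor (the inclusion $[\frake,\frake]\subset\xi^{\perp}$ above, together with the identification of the nilradical with the derived algebra for non-flat Einstein solvmanifolds, is what pins this down), and one needs the precise statement of how the abelian part of an Einstein solvmanifold acts on the nilradical, for which \cite{Heber}, \cite{LauretStandard} (or \cite{LafuenteLauret}) are the relevant inputs. An equivalent route would be to note, via Proposition~\ref{prop:RadialRic}, that $[S,A]=0$ is the same as $\nabla_{\nabla w}\Ric=0$ on $G$, try to establish that directly, and then apply Theorem~\ref{thm:WPEradial}; but in either formulation the whole content is the vanishing of $[S,A]$, which is forced by the solvmanifold structure of $E$ rather than by the Einstein condition alone. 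The remaining ingredients --- identifying $H$, the equivalence of the corrected soliton equation with normality of $\ad_\xi$, and the step from a normal derivation to a symmetric one --- are routine given what is already in the paper.
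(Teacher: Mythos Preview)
Your proposal is correct and follows essentially the same route as the paper: both invoke Theorem~\ref{thm:structureWPEextension} and Proposition~\ref{prop:H} to obtain the corrected soliton equation on $H$, pass to the Einstein solvmanifold $E$ of Theorem~\ref{thm:LieStruc}, and then use the standardness of $E$ together with Heber's structure theory to conclude that $\ad_\xi$ is normal (hence $[S,A]=0$). The only notable difference is cosmetic: where the paper cites \cite[Lemma~4.7]{LauretSol} for the implication ``$\ad_\xi$ normal $\Rightarrow$ $S$ is a derivation,'' you supply the standard eigenspace argument directly, and you are somewhat more explicit than the paper about why $\xi$ lies in the abelian factor of $\frake$.
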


\begin{rem} Recall that, under the  hypothesis, $\lambda$ must be negative. \end{rem}

\begin{proof}
From Proposition \ref{prop:H} and Theorem \ref{thm:structureWPEextension} we know that $H$ is a codimension one normal subgroup of $G$ and its Ricci curvature satisfies the following equation
\[
\Ric^H = \lambda I + S + \frac{1}{\tr(S) - \lambda m}[S, A],
\]
where the derivative $D$ is given in the proof of Theorem \ref{thm:structureWPEextension}. To show that $(H, h)$ is an algebraic Ricci soliton, it is sufficient to show that $S$, the symmetrization of $D$, is also a derivation, and $[S, A] = 0$. Since $D$ is given by $\dfrac{1}{\alpha} \ad_\xi$ in Case I of Theorem \ref{thm:structureWPEextension}, and by $\dfrac{1}{\alpha}\ad_\xi$ on $\frakh_1$ and $- \lambda I$ on $\Real^k$ in Case II of Theorem \ref{thm:structureWPEextension}, we only have to show that $(\ad_{\xi})^*$ is a derivation and $\ad_\xi$ is a normal operator. We prove this by considering the Einstein solvmanifold $(E, g_E)$ in Theorem \ref{thm:LieStruc}. 

Recall that the derivation $\ad_\xi$ is extended to $\mathfrak{e}= \Real\xi \oplus \frakh \oplus \Real^m$ by $\ad_\xi (U) = - L U$ for any $U \in \Real^m$. The property that it is a normal operator and its adjoint is a derivation of $\frakh$ holds if and only if its extension has the same property on $\mathfrak{e}$. In Theorem \ref{thm:LieStruc}, we have $[\mathfrak{e}, \mathfrak{e}] = \frakn \oplus \Real^m$ and its orthogonal complement is $\fraka$ which is abelian. It follows that $(E, g_E)$ is of standard type. Since $\xi \in \fraka$, $\ad_\xi$ is a normal operator by Theorem B, or Theorem 4.10 in \cite{Heber}. In \cite[Lemma 4.7]{LauretSol}, it is shown that this it is equivalent to $(\ad_\xi)^*$ being a derivation. This finishes the proof.
\end{proof}

Finally, using the structure results of algebraic Ricci soliton on solvmanifolds in \cite{LauretSol},  we have the following characterization of non-rigid  solvmanifolds.

\renewcommand{\theenumi}{\roman{enumi}}

\begin{thm}\label{thm:wpesolvmanifold}
Let $(G, g)$ be a solvmanifold with metric Lie algebra $(\mathfrak{g}, \scp{\cdot, \cdot})$ and consider  orthogonal decompositions of the form  $\mathfrak{g} = \mathfrak{a} \oplus \mathfrak{n}$ and $\mathfrak{a} = \Real \xi \oplus \mathfrak{a}'$, where $\mathfrak{n}$ is the nilradical of $\mathfrak{g}$ and $r$ is a signed distance function with $\nabla r = \xi$.  Then $(G, g)$ is a non-rigid space with  $e^{L r} \in W_{\lambda, n+m}(G,g)$ from some constants $L$ and $m$  if and only if the following conditions hold:
\begin{enumerate}
\item $(\mathfrak{n}, \scp{\cdot, \cdot}|_{\mathfrak{n}\times \mathfrak{n}})$ is a nilsoliton with Ricci operator $\mathrm{Ric}_1 = \lambda I + D_1$, for some $D_1 \in \Der(\mathfrak{n})$,
\item $[\mathfrak{a}, \mathfrak{a}] = 0$,
\item $(\mathrm{ad}_A)^* \in \Der(\mathfrak{g})$(or equivalently, $[\mathrm{ad}_A, (\mathrm{ad}_A)^*] = 0$) for all $A \in \mathfrak{a}$,
\item $\scp{A,A} = - \frac{1}{\lambda} \tr S(\mathrm{ad}_A)^2$ for all $A\in \mathfrak{a}'$,
\item $\tr S(\mathrm{ad}_{\xi})^2 = -\lambda - m L^2$.
\end{enumerate}
\end{thm}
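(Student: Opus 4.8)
My plan is to translate, in both directions, between the $(\lambda,n+m)$-Einstein geometry of $(G,g)$ and the algebraic conditions (i)--(v), using the structure theory already in hand: Theorems \ref{thm:structureWPEextension} and \ref{thm:Hsolvsoliton} plus Lauret's classification of solvsolitons \cite{LauretSol} for one implication, and Theorem \ref{thm:WPE1extension} together with the Einstein-solvmanifold construction of Theorem \ref{thm:LieStruc} for the other. In both directions the bridge is Proposition \ref{prop:H}: when $(G,g)$ is non-rigid with $w=e^{Lr}\in W_{\lambda,n+m}(G,g)$, the hypersurface $H=\{w=1\}$ is a codimension-one normal subgroup with $\nabla r=\xi\in\fraka$, and since $[\frakg,\frakg]\subseteq\frakn$ is a characteristic ideal of $\frakg$ while $\frakn$ is characteristic in $\frakh$, the nilradical of $\frakh$ is again $\frakn$, so $\frakh=\fraka'\oplus\frakn$ with the induced metric.

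For the forward implication I would argue as follows. By Theorem \ref{thm:structureWPEextension}, $(G,g)$ is the one-dimensional extension of $(H,h)$ by $D=\tfrac1\alpha\ad_\xi|_{\frakh}$ with $\alpha=L/\lambda$, and by Theorem \ref{thm:Hsolvsoliton} the operator $\ad_\xi|_{\frakh}$ is normal and its symmetrization is (a multiple of) a derivation $S$ with $\Ric^H=\lambda I+S$; thus $(H,h)$ is a solvsoliton with constant $\lambda$. Feeding this into Lauret's structure theorem \cite{LauretSol} applied to $\frakh=\fraka'\oplus\frakn$ gives condition (i) (with $D_1$ the nilsoliton derivation of $\frakn$), the relations $[\fraka',\fraka']=0$ and $(\ad_A)^*\in\Der(\frakh)$ for $A\in\fraka'$, and, since the solvsoliton normalization forces $\Ric^H|_{\fraka'}=\lambda I$, the metric identity (iv) on $\fraka'$ via the standard Ricci formula. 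The statements involving $\xi$ then follow from the fact that $\ad_\xi(\frakh)\subseteq[\frakg,\frakg]\subseteq\frakn$, which together with normality of $\ad_\xi$ forces $\ad_\xi(\fraka')=(\ad_\xi)^*(\fraka')=0$: writing $\ad_\xi|_{\frakh}=P\circ\ad_\xi$ for the orthogonal projection $P$ onto $\frakn$ shows $(\ad_\xi)^*$ kills $\fraka'$, and then $(\ad_\xi)^*\ad_\xi A\in\operatorname{im}(\ad_\xi)\cap\operatorname{im}(\ad_\xi)^{\perp}=0$ gives $\ad_\xi A=0$. This upgrades the previous relations to (ii) and (iii), and makes $S(\ad_\xi)$ vanish on $\mathbb R\xi\oplus\fraka'$; then Lemma \ref{lem:RicciGKHK} gives $\Ric^G(\xi,\xi)=-\tr(S(\ad_\xi)^2)$, and comparing with $\Hess w(\xi,\xi)=L^2w$ in $\Hess w=\tfrac wm(\Ric-\lambda g)$ yields exactly (v).

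For the converse, conditions (i), (ii), (iii) restricted to $\frakh=\fraka'\oplus\frakn$ are precisely Lauret's criteria, so $(H,h)$ is a solvsoliton with $\Ric^H=\lambda I+S_H$, $S_H\in\Der(\frakh)$ symmetric, and (iv) forces $S_H|_{\fraka'}=0$, so $S_H$ is supported on $\frakn$. Being algebraic, $(H,h)$ is normal, so Theorem \ref{thm:WPE1extension} (equivalently Corollary \ref{cor:SolitonQuasiEinstein}) produces a $(\lambda,n+m)$-Einstein one-dimensional extension of $(H,h)$ for every $m$. What remains is to identify the extension corresponding to the $m$ dictated by (v) with $(G,g)$ itself: one must verify that $(G,g)$ is the one-dimensional extension of $(H,h)$ by $\tfrac1\alpha\ad_\xi|_{\frakh}$ and that hypotheses (1)--(3) of Theorem \ref{thm:WPE1extension} hold. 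Hypotheses (2)--(3) reduce to $\mathrm{div}(S)=0$ and $\tr(S^2)=-\lambda\tr(S)$, which follow from the solvsoliton identities once one knows that the symmetrization of $\ad_\xi|_{\frakh}$ equals the scalar multiple $\alpha S_H$ of the solvsoliton derivation; granting this, Theorem \ref{thm:WPE1extension} delivers $e^{Lr}\in W_{\lambda,n+m}(G,g)$ with the prescribed $L$ and $m$.

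The main obstacle is exactly that last proportionality: a priori $S(\ad_\xi)$ and $S_H$ are unrelated symmetric derivations of $\frakn$, and pinning them down is where the rigidity of Einstein solvmanifolds enters. I would handle it by passing to $E=G\times_{e^{Lr}}\mathbb R^m$ of Theorem \ref{thm:LieStruc}, whose Lie algebra $\frake=\frakg\ltimes\mathbb R^m$ (with $[\xi,U]=-LU$ and $[\fraka'\oplus\frakn,U]=0$) decomposes as $\fraka\oplus(\frakn\oplus\mathbb R^m)$. Conditions (i)--(v) translate into: $(\frakn\oplus\mathbb R^m,g_E)$ is a nilsoliton with constant $\lambda$ (from (i), which forces the $\mathbb R^m$-block of the derivation to be $-\lambda I$); $\fraka$ is abelian with $(\ad_A)^*\in\Der(\frake)$ (from (ii), (iii), extended to $\frake$); and the $\fraka$-metric is Einstein-normalized, $\langle A,B\rangle=-\tfrac1\lambda\tr(S(\ad_A)S(\ad_B))$, whose diagonal is (iv) on $\fraka'$ and (v) on $\xi$ (here one uses $\tr(S(\ad_\xi^E)^2)=\tr(S(\ad_\xi)^2)+mL^2=-\lambda$). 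By Heber's theorem \cite{Heber} together with \cite{LauretSol}, $(E,g_E)$ is then a standard Einstein solvmanifold, and since $E=G\times_{e^{Lr}}\mathbb R^m$ this is equivalent to $e^{Lr}\in W_{\lambda,n+m}(G,g)$. The delicate point in the bookkeeping is the vanishing of the mixed traces $\tr(S(\ad_\xi)S(\ad_A))$ for $A\in\fraka'$, which is what makes the $\fraka$-metric genuinely Einstein-normalized rather than merely solvsoliton-admissible; this is the one place where the detailed structure theory of \cite{Heber} and \cite{LauretSol} is indispensable.
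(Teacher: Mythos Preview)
Your forward direction is essentially the paper's argument, only more explicit: the paper invokes Theorems \ref{thm:WPE1extension}, \ref{thm:WPEextDnormal}, \ref{thm:Hsolvsoliton} to reduce to ``$(H,h)$ is an algebraic Ricci soliton and $S(\ad_\xi)=\alpha D$,'' then cites \cite[Theorem 4.8]{LauretSol} for (i)--(iv) and reads (v) off from $\Ric(\xi,\xi)=-\tr S(\ad_\xi)^2$. Your careful justification that $[\xi,\fraka']=0$ (via $\ad_\xi(\frakh)\subset\frakn$ and normality) and that $(\ad_\xi)^*\in\Der(\frakg)$ fills in steps the paper leaves implicit.

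For the converse you take a genuinely different route. The paper argues directly: given (i)--(iv), Lauret's theorem makes $(H,h)$ a solvsoliton with derivation $D$, and the paper asserts that (v) is \emph{equivalent} to the existence of $\alpha$ with $S(\ad_\xi)=\alpha D$; Theorem \ref{thm:WPE1extension} then finishes. You instead pass to the warped product $E=G\times_{e^{Lr}}\mathbb R^m$ of Theorem \ref{thm:LieStruc}, translate (i)--(v) into the standard-Einstein-solvmanifold conditions on $\frake=\fraka\oplus(\frakn\oplus\mathbb R^m)$, and appeal to \cite{Heber,LauretSol} to conclude $E$ is Einstein, hence $e^{Lr}\in W_{\lambda,n+m}(G,g)$. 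This is a legitimate alternative that sidesteps the proportionality question entirely: once $E$ is Einstein you are done, no need to match $S(\ad_\xi)$ with $S_H$. The cost is the extra bookkeeping you flag---the off-diagonal metric normalization $\tr(S(\ad_\xi)S(\ad_A))=0$ for $A\in\fraka'$---which is not among the stated hypotheses and must be extracted from the structure theory. The paper's route avoids this by staying on $H$, but its one-line claim that (v) forces $S(\ad_\xi)=\alpha D$ is itself terse and leans on the same rigidity from \cite{LauretSol}. Both approaches ultimately rest on Lauret's structure theorem at the critical step; yours is more roundabout but arguably more transparent about where the external input is used.
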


\renewcommand{\theenumi}{\arabic{enumi}}

\begin{proof}
From Theorems \ref{thm:WPE1extension}, \ref{thm:WPEextDnormal} and \ref{thm:Hsolvsoliton}, $(G, g)$ is a non-rigid space with  $e^{Lr} \in W_{\lambda, n+m}(G,g)$ if and only if $(H, h)$ is an algebraic Ricci soliton, i.e., $\mathrm{Ric}^{H} = \lambda I + D$ for some $D \in \Der(\mathfrak{h})$, and $S(\mathrm{ad}_{\xi}) = \alpha D$ for some $\alpha \in \Real$. From \cite[Theorem 4.8]{LauretSol}, the structure results of algebraic Ricci solitons on solvmanifolds, we have conditions (i), (ii), (iii) and (iv) for any $\mathfrak{h}$. In (iii) the fact that $\left(\mathrm{ad}_{\xi}\right)^* \in \mathrm{Der}(\mathfrak{g})$ follows from that $S\left(\ad_{\xi}\right)$ is a derivation. The last condition (v) follows from the facts that $\mathrm{Ric}(\xi, \xi) = -(\lambda + mL^2)$ and $\mathrm{Ric}(\xi, \xi) = - \tr S(\mathrm{ad}_{\xi})^2$. It is equivalent to the existence of $\alpha$ such that $S\left(\mathrm{ad}_{\xi}\right) = \alpha D$.
\end{proof}

\medskip

\appendix

\section{An alternative approach to Semi-Algebraic Ricci solitons}

In this appendix we give an alternative approach to semi-algebraic Ricci solitons in terms of Lie derivatives acting on vector fields.  Since this approach does not rely on the additional homogeneous structure of the manifold, it readily generalizes to a concept of a general Ricci soliton being semi-algebraic with respect to a given sub-algebra of vector fields.  With this definition we can see that any (not necesarily homogeneous) Ricci soliton is semi-algebraic with respect to the algebra of Killing vector fields, generalizing the result in the homogeneous case.   

First we consider the homogeneous case.  Let $g$ be a $G$-homogeneous metric on $M$ and  let $\frakg$ be the Lie algebra of $G$.  For each $Y\in \frakg$ the one-parameter subgroup on $G$ generated by $X$,  $\exp(tY)$,  defines a one-parameter group of diffeomorphisms $\phi_t$ on $M$.  We can then identify $Y$ with the vector field on $M$ generated by $\phi_t$.  In doing so, we identify $\frakg$ with the  sub-algebra of $\mathfrak{X}(M)$ consisting of  Killing vector fields on $(M,g)$ which generate one-parameter subgroups of $G$ (recall that $G$ is not necessarily the whole isometry group).  With these identifications we obtain the following equivalent criteria for a Ricci soliton to be semi-algebraic with respect to $G$. 

\begin{prop} \label{prop:SASintrinsic}
A G-homogeneous metric $(M,g)$ is a semi-algebraic Ricci soliton if and only if there is a smooth vector field $X$ on $M$ such that 
\begin{equation}  \mathrm{Ric}_g = \lambda g +  \frac{1}{2}\mathscr{L}_X g \label{eqn:soliton} \end{equation}
and  the Lie derivative acting on vector fields
\[ \mathscr{L}_X: \mathfrak{X}(M) \rightarrow \mathfrak{X}(M) \]
leaves $\frakg \subset \mathfrak{X}(M)$ invariant.   Moreover,  for a fixed point $x$, we can assume that  $X|_x =0$ and $\mathscr{L}_X$ preserves $\frakg_x = \{ Y \in \frakg : Y|_x=0\}$. 
\end{prop}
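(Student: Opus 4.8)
The plan is to pass back and forth between the flow on $M$ and the one-parameter family of Lie algebra automorphisms it induces on $\frakg$, viewed inside $\mathfrak{X}(M)$ as Killing fields.

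\emph{The forward implication.} Suppose $(M,g)$ is semi-algebraic with respect to $G$, with data $\Phi_t\in\mathrm{Aut}(G)$, $c(t)$, and $\phi_t$ as in Definition~\ref{def:Jabl}, normalized so that $\Phi_0=\mathrm{id}$, $\phi_0=\mathrm{id}$, and $c(0)=1$. Let $V$ generate $\phi_t$ and set $X=-V$. Differentiating the identity $g_t=c(t)\phi_t^*(g)$ in the Ricci flow equation at $t=0$ yields $\Ric_g=\lambda g+\half\mathscr{L}_X g$ with $\lambda=-\half c'(0)$, which is \eqref{eqn:soliton}. For $Y\in\frakg$ a direct computation gives $\phi_t\circ\exp(sY)\circ\phi_t^{-1}=\exp\!\big(s\,\mathrm{d}\Phi_t(Y)\big)$ on $M$, so $(\phi_t)_*Y=\mathrm{d}\Phi_t(Y)\in\frakg$; differentiating at $t=0$ shows that $\mathscr{L}_X$ leaves $\frakg$ invariant. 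Finally $\Phi_t(e)=e$ forces $\phi_t(x)=x$, hence $X|_x=0$, and $\Phi_t(K)=K$ makes $\mathrm{d}\Phi_t$ preserve $\frakk=\frakg_x$, so $\mathscr{L}_X$ preserves $\frakg_x$.

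\emph{A reduction for the converse.} Now suppose $X$ satisfies \eqref{eqn:soliton} with $\mathscr{L}_X(\frakg)\subseteq\frakg$. Choosing $Y\in\frakg$ with $Y|_x=X|_x$ and replacing $X$ by $X-Y$, we still have \eqref{eqn:soliton} and invariance of $\frakg$ since $\mathscr{L}_Y g=0$, and now $X|_x=0$. With $X|_x=0$, for any $Y\in\frakg_x$ one has $(\mathscr{L}_X Y)|_x=(\nabla_XY-\nabla_YX)|_x=0$ because both $X|_x=0$ and $Y|_x=0$; thus $\mathscr{L}_X$ preserves $\frakg_x$ automatically, and we may assume $X|_x=0$ from the outset.

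\emph{The converse implication.} The restriction $d:=-\mathscr{L}_X|_\frakg$ is a derivation of $\frakg$ preserving $\frakk=\frakg_x$. Passing to the universal cover we may take $G$ simply connected, so $d$ exponentiates to a one-parameter group $\Phi_t\in\mathrm{Aut}(G)$ with $\mathrm{d}\Phi_t=e^{td}$; since $e^{td}$ preserves $\frakk$ and $\Phi_0=\mathrm{id}$, each $\Phi_t$ preserves $K$ and descends to a diffeomorphism $\bar\phi_t$ of $M$ fixing $x$ with $(\bar\phi_t)_*Y=e^{td}(Y)$ for $Y\in\frakg$. If $\bar X$ generates $\bar\phi_t$, then $\mathscr{L}_{\bar X}Y=-dY=\mathscr{L}_XY$ for all $Y\in\frakg$ and $\bar X|_x=0=X|_x$, so $\bar X-X$ commutes with $\frakg$ and vanishes at $x$; being $G$-invariant, it vanishes identically by transitivity, i.e.\ $\bar X=X$. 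Hence the flow of $X$ is complete and coincides with $\bar\phi_t$, which is induced by $\Phi_t$. Since $X$ solves \eqref{eqn:soliton}, the standard self-similar argument shows that $c(t)\psi_t^*(g)$, with $c(t)=1-2\lambda t$ and $\psi_t=\phi_{F(t)}$ a time-reparametrization of the flow of $X$, solves the Ricci flow with $g_0=g$; each $\psi_t$ is induced by the automorphism $\Phi_{F(t)}$ preserving $K$, so Definition~\ref{def:Jabl} holds with this family.

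\emph{The main difficulty.} The delicate step is the converse direction: producing honest automorphisms of $G$, not merely of $\frakg$, that realize the prescribed flow on $M$ and fix the isotropy subgroup. This relies on the reduction to simply connected $G$ and on the rigidity fact that a vector field vanishing at a point and commuting with a transitive Killing algebra must be zero, which is exactly what identifies $\bar\phi_t$ with the flow of $X$. Once that is established, the bookkeeping with the scaling factor $c(t)$ and the time change $F(t)$ needed to match Definition~\ref{def:Jabl} is routine.
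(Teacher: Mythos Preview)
Your proof is correct and follows essentially the same approach as the paper: in both directions the key is the correspondence between the flow $\phi_t$ on $M$ and the one-parameter family of Lie algebra automorphisms $d\Phi_t$ acting on $\frakg\subset\mathfrak{X}(M)$ via pushforward of Killing fields, together with the reduction $X\mapsto X-Y$ to arrange $X|_x=0$.

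Your converse is in fact more carefully justified than the paper's. The paper defines $\phi_s$ as the flow of $X$, defines $\Phi_s=\exp(sD)$ from the derivation $D=\mathscr{L}_X|_\frakg$, and then simply asserts the identity $\phi_s(hK)=\Phi_s(h)K$; you supply the missing step by showing that the diffeomorphism $\bar\phi_t$ induced by $\Phi_t$ has generator $\bar X$ satisfying $\bar X-X\in\mathfrak{X}(M)$ commuting with all of $\frakg$ and vanishing at $x$, hence $G$-invariant and identically zero by transitivity. One small wording quibble: in your reduction step, the invariance of $\frakg$ under $\mathscr{L}_{X-Y}$ follows from $Y\in\frakg$ (so $\mathrm{ad}_Y$ preserves $\frakg$), not from $\mathscr{L}_Y g=0$; the latter only guarantees that \eqref{eqn:soliton} is preserved.
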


\begin{proof}
With the respect to the fixed point  $x\in M$, with $K$ denoting the isotropy at $x$ and $Aut(G)^K$ denoting the automorphisms of $G$ that preserve $K$, using the proofs of Propositions 2.2 and 2.3 in \cite{Jablonski}, we can write a semi-algebraic Ricci soliton in the form
\[ g_t = (1-2\lambda t) \phi^*_{s(t)} (g) \]
where $\phi_t$ is defined by  
\[ \phi_s(h \cdot x) = \Phi_s(h) \cdot  x \qquad h \in G \]
and  $\Phi_s = \exp(sD)  \in Aut(G)^{K}$ for some $D \in \Der(\frakg)$.  Letting $X = \frac{d}{ds}|_{s=0} \phi^*_s$.  Then equation (\ref{eqn:soliton}) is satisfied.  Moreover, under the identification of  vector fields with elements of $\frakg$ we can see that 
\begin{eqnarray*}
\mathscr{L}_X Y = D(Y).
\end{eqnarray*}
In particular, $\mathscr{L}_X$  is a derivation when restricted to $\frakg \subset \mathfrak{X}(M)$ and, in particular must preserve $\frakg$. 

Conversely, suppose that we have a solution to (\ref{eqn:soliton}) such that $\mathscr{L}_X$ also preserves $\frakg$.  Fix a point $x \in M$, since $G$ acts transitively on $M$, there is a Killing vector field $Y \in \frakg$ such that $Y|_x = X|_x$.  Replacing $X$ by $X-Y$ allows us to assume that $X|_x = 0$.    Moreover, since $Y \in \frakg$, $\mathscr{L}_X$ will still preserve $\frakg$.

Let $\phi_s$  be the one-parameter family of diffeomorphisms generated by $X$, then  equation (\ref{eqn:soliton}) implies that
\[ g_t = (1-2\lambda t) \phi^*_{s(t)} (g) \]
is a solution to the Ricci flow.   Since  $\mathscr{L}_X$ preserves $\frakg$, it  defines a derivation of $\frakg$ which we call  $D$.  Since $X|_x = 0$, $\mathscr{L}_X$  also preserves  the subalgebra $\frakg_x $ as 
\[
[X, Y]|_x = \nabla_{X|_x} Y - \nabla_{Y|_x} X = 0.
\]
Therefore, $D$ will preserve $K$.     Defining $\Phi_s = \exp(sD)$ we  then see that  $\Phi(K) = K$ and 
\[ \phi_s(hK) = \Phi_s(h) K \qquad \text{for  } h \in G \]
showing that $(M,g)$ is a semi-algebraic Ricci soliton.  
\end{proof}
  
Proposition  \ref{prop:SASintrinsic} can also be thought of as giving an alternative, intrinsic definition of semi-algebraic Ricci solitons in terms of the sub-algebra $\frakg \subset \mathfrak{X}(M)$.  This leads to the following definition for non-homogeneous Ricci solitons.

\begin{definition} Let $(M,g,X)$ be a (not necessarily homogeneous) Ricci soliton and let $\frak{g}$ be an arbitrary subalgebra of $\mathfrak{X}(M)$.  Then $(M,g,X)$ is  \emph{ semi-algebraic with respect to $\frakg$} if  $\mathscr{L}_X: \mathfrak{X}(M) \rightarrow \mathfrak{X}(M)$ preserves $\frakg$.
\end{definition}
Proposition \ref{prop:SASintrinsic} shows this definition agrees with the definition of Jablonski in the homogeneous case.

\begin{rem}
We note that our definition of a semi-algebraic Ricci soliton depends on the choice of the vector field $X$: the condition that $\mathscr{L}_X$ preserve $\frakg$ is not invariant under adding a general Killing vector field to $X$. 
\end{rem}

Jablonski  also shows  in \cite{Jablonski} that every homogeneous Ricci soliton is algebraic with respect to its isometry group.  This result can easily be seen to generalize to the nonhomogeneous case. 

\begin{prop} 
Every Ricci soliton is semi-algebraic with respect to the subalgebra of Killing vector fields $\frakg = \mathfrak{iso}(M,g)$. 
\end{prop}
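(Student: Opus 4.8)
The statement to establish is that, under the definition given immediately above, an arbitrary Ricci soliton $(M,g,X)$ satisfies the semi-algebraicity condition with respect to $\frakg=\mathfrak{iso}(M,g)$; that is, $\mathscr{L}_X\colon\mathfrak{X}(M)\to\mathfrak{X}(M)$ maps Killing fields to Killing fields. So the plan is: fix a Killing field $Y$ and show that $\mathscr{L}_X Y=[X,Y]$ is again Killing, which is equivalent to $\mathscr{L}_{[X,Y]}g=0$.

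First I would apply the standard operator identity $\mathscr{L}_{[X,Y]}=\mathscr{L}_X\circ\mathscr{L}_Y-\mathscr{L}_Y\circ\mathscr{L}_X$ on tensor fields to the metric $g$, obtaining $\mathscr{L}_{[X,Y]}g=\mathscr{L}_X\!\left(\mathscr{L}_Y g\right)-\mathscr{L}_Y\!\left(\mathscr{L}_X g\right)$. Since $Y$ is Killing, $\mathscr{L}_Y g=0$, so the first term vanishes. For the second term I would substitute the soliton equation rewritten as $\mathscr{L}_X g=2\,\mathrm{Ric}_g-2\lambda g$, which gives $\mathscr{L}_{[X,Y]}g=-2\,\mathscr{L}_Y\mathrm{Ric}_g+2\lambda\,\mathscr{L}_Y g=-2\,\mathscr{L}_Y\mathrm{Ric}_g$.

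Finally, since the (local) flow of the Killing field $Y$ consists of isometries of $(M,g)$ and the Ricci tensor is natural under isometries, we have $\mathscr{L}_Y\mathrm{Ric}_g=0$; hence $\mathscr{L}_{[X,Y]}g=0$, so $[X,Y]\in\mathfrak{iso}(M,g)$ and $\mathscr{L}_X$ preserves $\frakg$. There is no real obstacle here; the only points worth spelling out are the commutator identity for Lie derivatives acting on arbitrary tensors and the fact that $\mathscr{L}_Y$ annihilates $\mathrm{Ric}_g$ when $Y$ is Killing. It is worth remarking that homogeneity is used nowhere in this argument, which is precisely why it extends Jablonski's homogeneous result to the general (possibly non-homogeneous) setting.
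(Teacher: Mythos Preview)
Your proof is correct and follows essentially the same route as the paper's: compute $\mathscr{L}_{[X,Y]}g$ via the commutator identity, use $\mathscr{L}_Y g=0$, substitute the soliton equation, and invoke $\mathscr{L}_Y\mathrm{Ric}=0$ for a Killing field $Y$. The only difference is that you spell out the intermediate steps in more detail than the paper does.
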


\begin{proof}
 let $Y$ be a Killing vector field, then
\begin{eqnarray*}
\mathscr{L}_{[X, Y]} g = \mathscr{L}_X \mathscr{L}_Y g - \mathscr{L}_Y \mathscr{L}_X g= -2 \mathscr{L}_Y\left(\Ric - \lambda g\right) = 0,
\end{eqnarray*}
i.e., $[X, Y]$ is also a Killing vector field.
\end{proof}


\medskip{}

\end{document}